   \renewcommand{\footnote}[1]{
  \textsuperscript{ %ecriture en exposant
    \addtocounter{footnote}{1} %incrementation du compteur
    (\thefootnote) % impression au format "(compteur)"
  }
   \footnotetext{#1} % la note de bas de page
}
\DeclareMathAlphabet{\mathpzc}{OT1}{pzc}{m}{it}
\newtheorem{prop}{Proposition}%[chapter]
\newtheorem{thm}{Theorem}%[chapter]
\newtheorem{lem}{Lemma}%[chapter]
\theoremstyle{remark}
\newtheorem{remark}{Remark}%[chapter]
\newtheorem{notation}{Notation}
\newtheorem{definition}{Definition}
\def\v{\varepsilon}
\def\R{\mathbb{R}}
\def\N{\mathbb{N}}
\def\C{\mathbb{C}}
\def\O{\Omega}
\def\n{\nabla}
\def\p{\partial}
\def\*{u_{*}}
\def\x{\xi_{\varepsilon}}
\def\g{g_{\varepsilon}}
\def\be{\begin{eqnarray}}
\def\ee{\end{eqnarray}}
\def\1{\textrm{1\kern-0.25emI}}
\def\deg{\text{\rm deg}}
\def\dist{\text{\rm dist}}
\def\tr{\text{\rm tr}}
\def\H{\mathcal{H}}
\def\mint_#1{\mathchoice 
{\mathop{\vrule width 6pt height 3 pt depth -2.5pt 
\kern -8.8pt \intop}\nolimits_{#1}}% 
{\mathop{\vrule width 5pt height 3 pt depth -2.6pt 
\kern -6.5pt \intop}\nolimits_{#1}}% 
{\mathop{\vrule width 5pt height 3 pt depth -2.6pt 
\kern -6pt \intop}\nolimits_{#1}}% 
{\mathop{\vrule width 5pt height 3 pt depth -2.6pt 
\kern -6pt \intop}\nolimits_{#1}}}  
\def\d{\displaystyle}
\def\diam{\text{\rm diam}}
\def\cl{\mathscr{C}}
\def\Eeps(u){E_\v(u):=\frac{1}{2}\int_A{|\n u|^2\di x}+\frac{1}{4\v^2}\int_A{\left(1-|u|^2\right)^2\di x}}
\def\di{\,\text{\rm d}}
\def\o{\omega}
\def\g{\gamma}
\def\S{\mathbb{S}}
\def\deucl{d_{\rm eucl}}
\def\Lip{{\rm Lip}}
\def\Card{{\rm Card}}
\def\Pos{{\mathscr{ P}}}
\def\Neg{\mathscr{N}}
\title{Study of a $3D$-Ginzburg-Landau functional with a discontinuous pinning term}
\author{Mickaël {\sc Dos Santos}\footnote{Université Paris-Est, LAMA–CNRS UMR 8050, 61, Avenue du Général de Gaulle, 94010 Créteil. France }\\\url{mickael.dos-santos@u-pec.fr}
}
\begin{document}

%\chapter{Vorticity defaults for a $3D$ Ginzburg-Landau pinning term under the effect of a discontinuous pinning term}
\begin{abstract}
In a convex domain $\O\subset\R^3$, we consider the minimization of a $3D$-Ginzburg-Landau type energy $E_\v(u)=\frac{1}{2}\int_\O|\n u|^2+\frac{1}{2\v^2}(a^2-|u|^2)^2$ with a discontinuous pinning term  $a$ among $H^1(\O,\C)$-maps subject to a Dirichlet boundary condition $g\in H^{1/2}(\p\O,\S^1)$. The pinning term $a:\R^3\to\R^*_+$ takes a constant value $b\in(0,1)$ in $\o$, an inner strictly convex subdomain of $\O$, and $1$ outside $\o$.
We prove energy estimates with various error terms depending on assumptions on $\O,\o$ and $g$. In some special cases, we identify the vorticity defects via the concentration of the energy. Under hypotheses on the  singularities of $g$ (the singularities are polarized and quantified by their degrees  which are $\pm 1$), vorticity defects are geodesics (computed w.r.t. a geodesic metric $d_{a^2}$ depending only on $a$) joining two paired singularities of $g$ $p_i\& n_{\sigma(i)}$ where $\sigma$ is a minimal connection (computed w.r.t. a metric $d_{a^2}$) of the singularities of $g$ and $p_1,...,p_k$ are the positive (resp. $n_1,...,n_k$ are the negative) singularities. 
\end{abstract}
\begin{keyword}
Ginzburg-Landau energy\sep vorticity defects\sep pinning  \sep energy minimization
\MSC 35J55\sep 35J50\sep 35B40
\end{keyword}
\maketitle
\tableofcontents
%\minitoc
%\newpage
\section{Introduction}
In a convex domain $\O\subset\R^3$, we consider the minimization of a $3D$-Ginzburg-Landau type energy with a discontinuous pinning term  among $H^1(\O,\C)$-maps subject to a Dirichlet boundary condition $g\in H^{1/2}(\p\O,\S^1)$. The pinning term $a:\R^3\to\R^*_+$ takes a constant value $b\in(0,1)$ in $\o$, an inner strictly convex subdomain of $\O$, and $1$ outside $\o$. The strict convexity of $\o$ is not necessary but it allows to make a simpler description of the techniques used in this article.

Our Ginzburg-Landau type energy is
\begin{equation}\label{5.GLpinning}
E_\v(u)=\frac{1}{2}\int_{\O}{\left\{|\n u(x)|^2+\frac{1}{2\v^2}\left[a(x)^2-|u(x)|^2\right]^2\right\}\,{\rm d}x}.
\end{equation}
In \eqref{5.GLpinning}, $u\in H^1_g:=\{u\in H^1(\O,\C)\,|\,\tr_{\p\O}u=g\}$.

We are interested in studying the {\it vorticity defects} of minimizers of $E_\v$ in $H^1_g$ via energetic estimates. In this article, letting $u_\v$ be such a minimizer, we aim in describing the set $\{|u_\v|\leq b/2\}$ (this set is the vorticity defects). In the asymptotic $\v\to0$ we expect that, at least for special $g$'s, the set $\{|u_\v|\leq b/2\}$ takes the form of a union of thin wires whose endpoints are in $\p\O$; under this form the vorticity defects are called {\it vorticity lines}. We also expect that a concentration of the energy occurs around this set.

Because the pinning term is discontinuous, an energetical noise appears in a small neighborhood of the discontinuity set of $a$ (this set is $\p\o$).

In order to study the minimization problem of $E_\v$ in $H^1_g$ we first consider an auxiliary minimization problem. Following \cite{LM1}, we let $U_\v$ be {\bf the} unique minimizer of $E_\v$ in $H^1_1:=\{u\in H^1(\O,\C)\,|\,\tr_{\p\O}u\equiv1\}$. The solution $U_\v$ plays an important role in the study. It allows to consider a decoupling of $E_\v$ (see Section \ref{S.Decupling-Lassoued-Mironescu}). If $v\in H^1(\O,\C)$ and $|v|\equiv1$ on $\p\O$, then \cite{LM1}
\[
E_\v(U_\v v)=E_\v(U_\v)+F_\v(v),\text{ where }F_\v(v)=\frac{1}{2}\int_\O\left\{U_\v^2|\n v|^2+\frac{U_\v^4}{2\v^2}(1-|v|^2)^2\right\}.
\]
Consequently the study of minimizers of $E_\v$ in $H^1_g$ is related to the study of minimizers of $F_\v$ in $H^1_g$. 

Our techniques are directly inspired from those initially developed by Sandier in \cite{Sandier1} (whose purpose was to give, in some special situations, a simple proof of the $3D$ analysis of the Ginzburg-Landau equation, by Lin and Rivière \cite{LR1}), and by their adaptations in \cite{BBM1}.

We prove energy estimates with various error terms depending on our assumptions on $\O$ and $g$ (see Theorems \ref{T5.Main1},\ref{T5.Main2} $\&$ \ref{T5.MainSymCase} in Section \ref{S5.MainStatements}). In some special cases, we identify the vorticity lines via the concentration of the energy. At the end of this section, we will present a strategy which could lead to the localization of the vorticity lines.% are established with various sharpness depending on the geometry of $\O,\o$ and the singularities of $g$. 

The results we present are a first step towards a more precise description of the vorticity defects and of the asymptotic of minimizers.

Before stating our own results, we start by recalling the asymptotic expansion of the energy in the standard $3D$-Ginzburg-Landau model (when $a\equiv1$).

For $g\in H^{1/2}(\p\O,\S^1)$, if we let%it is well known that denoting 
\[
E_\v^0(u)=\frac{1}{2}\int_\O\left\{|\n u|^2+\frac{1}{2\v^2}(1-|u|^2)^2\right\},
\]
%and $H^1_g=\{u\in H^1(\O,\C)\,|\,\tr_{\p\O}u=g\}$ 
then we have 
\begin{equation}\label{5.Form14}
\inf_{H^1_g}E_\v^0=C(g)|\ln\v|+o(|\ln\v|).%%,\,\mathcal{O}(1)\text{ depends on $g$}.
\end{equation}
Moreover, $\dfrac{C(g)}{\pi}$ is given by the length of a minimal connection connecting the singularities of $g$ (in the spirit of Brezis, Coron, Lieb \cite{BCL1}). (See \cite{LR1}, \cite{LR2}, \cite{Sandier1} and \cite{BBM1}).% identified and $C(g)=0$ when $g\in\overline{C^\infty(\p\O,\S^1)}^{H^{1/2}}$. Note that when $g={\rm e}^{\imath\phi}$, with $\phi\in H^{1/2}(\p\O,\R)$ then $\inf_{H^1_g}E_\v^0\leq C'(g)$.

For special $g$'s and for  a convex domain $\O$, \eqref{5.Form14} was obtained by Lin and Rivière \cite{LR1} (see also \cite{LR2}) and Sandier \cite{Sandier1}. The case of a general data $g\in H^{1/2}(\p\O,\S^1)$ and a simply connected  $\O$ is due to Bourgain, Brezis and Mironescu \cite{BBM1}.

The above articles are the main references in this work. One of our main results is the analog of \eqref{5.Form14} for the minimization of $F_\v$ (Theorem \ref{T5.Main1}). This result is first proved when $g$ is in a dense set $\mathcal{H}\subset H^{1/2}(\p\O,\S^1)$ and then extended by density. The upper bound is obtained directly using the techniques developed in \cite{Sandier1} and \cite{BBM1}. The lower bound needs an adaptation in the argument of Sandier \cite{Sandier1}. The main ingredient used to obtain a lower bound in \cite{Sandier1} is the existence of a "{\it structure function}" adapted to the singularities of $g$ (see Section \ref{S5.StructureFunction}). In the spirit of \cite{Sandier1}, we prove, under suitable assumptions on $\O$, $\o$ and $g$, the existence of structure functions adapted to our situation %. We present below constructions (in the spirit of Sandier) of structure functions under restrictive hypotheses on the geometries of $\O,\o$ and on the singularities of $g$
  (see Propositions \ref{C5.Ustruc}, \ref{C5.StructureFunctionCompact}, \ref{C5.StructureFunctionCompactBisBis} and \ref{P5.StructFonctSymCas}).

In our situation ($a=b\text{ in }\o\text{ and }a=1\text{ in }\R^3\setminus\o$), we have an analog of \eqref{5.Form14} for $\inf_{H^1_g}F_\v$ replacing $C(g)$ by $C(g,a)$. When $g$ admits a finite number of singularities, the constant ${{C(g,a)}}/{\pi}$ is the length of a minimal connection between the singularities of $g$ (see Section \ref{S5.PropertiesGeomObject} for precise definitions). This minimal connection is computed w.r.t. a metric $d_{a^2}$ depending only on $a$ (see \eqref{5.DefDistancea^2}). (This generalizes the case of the standard potential $(1-|u|^2)^2$, where the distance is the euclidean one.)% in  a metric space whose distance function is directly related with $a$.

When $g$ has a finite number of singularities, one may prove a concentration of the energy along the vorticity lines (See Theorems \ref{T5.Main2} and \ref{T5.MainSymCase}).  As in \cite{LR1} and \cite{Sandier1}, we obtain, after normalization, that the energy of minimizer is uniform along the vorticity lines (See Theorem \ref{T5.Main2}). These vorticity lines are identified: they are geodesic segments associated to $d_{a^2}$.

In order to sum up our main results we state a theorem in a  simplified form. This theorem is a direct consequence of Theorems \ref{T5.Main1}, \ref{T5.Main2} and \ref{T5.MainSymCase} stated in Section \ref{S5.MainStatements}. 

\begin{thm}
Let $g\in H^{1/2}(\p\O,\S^1)$ then we have
\begin{enumerate}[$\bullet$]
\item $\inf_{H^1_g}E_\v=E_\v(U_\v)+C(g,a)|\ln\v|+o(|\ln\v|)$ where $E_\v(U_\v)\sim\v^{-1}$ and $C(g,a)$ depends only on the singularities of $g$ and on $a$ (it is the length of a minimal connection of the singularities of $g$ computed w.r.t. the distance $d_{a^2}$).
\item Let $g$ be a prepared boundary condition with a finite number of singularities of degree $\pm1$ ($g\in\H$, $\H$ defined in \eqref{TheNumberOfTheDenseSet}) and let $p_1,...,p_k$ (resp. $n_1,...,n_k$) be the positive (resp. negative) singularities of $g$. Let $\Gamma$ be the geodesic link of the singularities  (we assume that $\Gamma$ is unique), \emph{i.e.}, $\Gamma$ is a union of $k$ geodesic curves joining $p_i$ with $n_{\sigma(i)}$ where $\sigma$ is a permutation of $\{1,...,k\}$ s.t. the total $d_{a^2}$-length of the curves is minimal. 

Letting $v_\v$ be a minimizer of $F_\v$ in $H^1_g$ ({\it i.e.} $U_\v v_\v$ minimizes $E_\v$) we have
\[
\frac{\displaystyle\frac{U^2_\v}{2}|\n v_\v|^2+\frac{U^4_\v}{4\v^2}(1-|v_\v|^2)^2}{|\ln\v|}\,\mathscr{H}^3\text{ weakly converges in  $\O$ in the sense of the measures to }\pi a^2\mathscr{H}^1_{|\Gamma}.
\]   
Here $\mathscr{H}^3$ is the $3$-dimensional Hausdorff measure and $\mathscr{H}^1_{|\Gamma}$ is the one dimensional Hausdorff measure on $\Gamma$
\end{enumerate}
\end{thm}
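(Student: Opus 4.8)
The plan is to derive this simplified theorem as a corollary of the three main results $\text{Theorems \ref{T5.Main1}, \ref{T5.Main2} and \ref{T5.MainSymCase}}$, which are assumed available. Accordingly, I would not reprove the hard analysis; instead I would assemble the energy expansion (first bullet) and the measure-theoretic concentration (second bullet) from those statements, checking that the hypotheses under which each applies are exactly the ones stated here.

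\textbf{First bullet (energy expansion).} Starting from the Lassoued--Mironescu decoupling recalled in the introduction, $E_\v(U_\v v)=E_\v(U_\v)+F_\v(v)$ for $v\in H^1(\O,\C)$ with $|v|\equiv1$ on $\p\O$, I would first note that minimizing $E_\v$ over $H^1_g$ is equivalent to minimizing $F_\v$ over $H^1_g$ via the bijection $u=U_\v v$, so that
\[
\inf_{H^1_g}E_\v=E_\v(U_\v)+\inf_{H^1_g}F_\v.
\]
The asymptotics $E_\v(U_\v)\sim\v^{-1}$ is a property of the auxiliary minimizer $U_\v$ coming from \cite{LM1} (the bulk term $\frac{1}{4\v^2}\int_\o(1-b^2)^2$ forces this order). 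The term $\inf_{H^1_g}F_\v=C(g,a)|\ln\v|+o(|\ln\v|)$, together with the identification of $C(g,a)/\pi$ as the minimal-connection length for $d_{a^2}$, is precisely the content of Theorem \ref{T5.Main1}, which holds for a general $g\in H^{1/2}(\p\O,\S^1)$ (proved on the dense set $\H$ and extended by density). Summing gives the first bullet verbatim.

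\textbf{Second bullet (concentration of the energy measure).} Here I would restrict to a prepared boundary condition $g\in\H$ with finitely many $\pm1$ singularities $p_1,\dots,p_k,n_1,\dots,n_k$, and to the case where the geodesic link $\Gamma$ is unique. The claimed weak-* convergence of the normalized energy density of $v_\v$ to $\pi a^2\,\mathscr{H}^1_{|\Gamma}$ is exactly Theorem \ref{T5.Main2} (refined, in the symmetric configurations, by Theorem \ref{T5.MainSymCase}); I would simply invoke it. The point to verify is that the factor $a^2$ appearing in the limit measure is consistent with the $d_{a^2}$-length appearing in $C(g,a)$: integrating $\pi a^2\,\mathscr{H}^1$ along a geodesic segment of $d_{a^2}$ recovers $\pi$ times its $d_{a^2}$-length, so the total mass of the limit measure equals $C(g,a)$, matching the leading-order coefficient from the first bullet. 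This internal consistency check (total mass of the limit measure $=$ leading coefficient of $\inf F_\v$) is the one genuine computation I would include, since it is what ties the two bullets together and confirms that no mass escapes to the boundary $\p\O$ or concentrates on $\p\o$ (the discontinuity set of $a$, where the ``energetical noise'' lives).

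\textbf{Main obstacle.} Since the substance is outsourced to the three main theorems, the only real subtlety is bookkeeping: making sure the hypotheses match (strict convexity of $\o$, convexity of $\O$, polarization and $\pm1$ quantization of the singularities of $g$, uniqueness of $\Gamma$) and that the measure-theoretic statement is read off with the correct constant $\pi a^2$ rather than $\pi$. The potential pitfall is the interface $\p\o$: a priori the discontinuity of $a$ could create a spurious contribution to the limiting energy measure, and one must be sure that Theorem \ref{T5.Main2} already rules this out so that $\Gamma$ carries the \emph{entire} limiting mass. I expect this to be handled by the theorems invoked, so the corollary follows by direct assembly.
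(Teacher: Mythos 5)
Your proposal is correct and follows the paper's own route: the paper states this theorem precisely as a direct consequence of Theorems \ref{T5.Main1}, \ref{T5.Main2} and \ref{T5.MainSymCase}, assembled through the Lassoued--Mironescu decoupling $\inf_{H^1_g}E_\v=E_\v(U_\v)+\inf_{H^1_g}F_\v$ of Proposition \ref{P5.Fond1}, exactly as you do. One harmless slip in a parenthetical: $E_\v(U_\v)\sim\v^{-1}$ is not forced by the bulk term $\frac{1}{4\v^2}\int_\o(1-b^2)^2$ (that reasoning would suggest order $\v^{-2}$); since $U_\v$ is exponentially close to $a$ away from $\p\o$ (Proposition \ref{P5.Fond1}.4), the $\v^{-1}$ order is an interface effect concentrated near $\p\o$, as the paper explains in Section \ref{S5.PropertiesOfSpecial} --- but your argument correctly rests on the cited Proposition \ref{P5.Fond1}.3 rather than on this heuristic, so nothing breaks.
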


The goal of this work is to explain how the vorticity lines are modified under the effect of a pinning term. %In order to have topological reasons to obtain vorticity defaults (zeroes of minimizers), we restrict ourself the study to $g\in H^{1/2}(\p\O,\S^1)\setminus\overline{C^\infty(\O,\S^1)}^{H^{1/2}}$. 
Although from the theorems below we have an idea about the form of the vorticity defects, we do not identify exactly the set $\{|u_\v|\leq b/2\}$ (this set is the vorticity defects). In the study of Ginzburg-Landau type energies, it is standard to detect $\{|u_\v|\leq b/2\}$ in a first step via a concentration of the energy. Once this done, in a second step, coupling energy estimates with an {\it $\eta$-ellipticity result} (see below) we get that the set where we have a concentration of the energy corresponds to the vorticity defects.  In order to have a complete and rigorous description of the vorticity defects, we need  an $\eta$-ellipticity results in the spirit of \cite{BBOEllipticity} for the minimizers of $F_\v$. Namely:  fix $r>0$ then for small $\v$ and $v$ a minimizer of $F_\v$
\[
\text{ \it$\text{if, in a ball }B(x,r),\text{ the quantity }\frac{F_\v(v,B(x,r))}{|\ln\v|}\text{ is small, }\text{ then }|v(x)|\simeq1$.}
\]

It seems that an $\eta$-ellipticity result cannot be obtained by the standard method, which relies on a monotonicity formula obtained from a Pohozaev identity. The oscillating behavior of $U_\v$ yields impossible the direct application of monotonicity formulas. When $U_\v$ does not oscillate, it is possible to derive $\eta$-ellipticity (see \emph{e.g.} \cite{ZL1PinGL}). In our case, $\eta$-ellipticity would require a uniform control on the Lipschitz norm of $U_\v$; this does not hold in our situation.
% This is due to the wild behavior of $U_\v$ close to the interface $\p\o$. For example such argument has already been adapted for a pinned Ginzburg-Landau functional but with a smooth pinning term (see \emph{e.g.} \cite{ZL1PinGL}). This adaptation requires a control on the Lipschitz norm of $U_\v$ which, in the situation of a discontinuous pinning term, is not bounded close to the interface.

Note that, the main result in \cite{BBOEllipticity} (an $\eta$-ellipticity result for critical points of $E_\v^0$) may be applied in a ball $B(x,r)$ s.t. $\overline{B(x,r)}\subset\O\setminus\p\o$. Indeed, let $u_\v=U_\v v_\v$ be a minimizer of $E_\v$ in $H^1_g$ and fix a ball $\overline{B(x,r)}\subset\O\setminus\p\o$. 

We let for $z\in B(0,a(x)r)$ $\tilde{u}_\v(z)={u_\v\left(x+\frac{z}{a(x)}\right)}/{a(x)}$ which solves $-\Delta u=\v^{-2}u(1-|u|^2)$ in $B(0,a(x)r)$. Note that if $x\notin\o$ then $\tilde{u}(z)=u(x+z)$. 

Because $\overline{B(x,r)}\subset\O\setminus\p\o$, we have $E_\v(U_\v,B(x,r))\to0$ and thus 
\[
\frac{F_\v(v_\v,B(x,r))}{|\ln\v|}\text{ is small }\Leftrightarrow\frac{E_\v(u_\v,B(x,r))}{|\ln\v|}\text{ is small }\Leftrightarrow\frac{E^0_\v(\tilde{u}_\v,B(0,a(x)r))}{|\ln\v|}\text{ is small}.
\]
We may apply the main result of \cite{BBOEllipticity} with $\tilde{u}_\v$ in $B(0,a(x)r)$ and, because $|\tilde{u}_\v(0)|=|u_\v(x)|/a(x)$, our results of concentration of the energy for $v_\v$ imply that outside the set of concentration of the energy {\bf and} "far from" $\p\o$ we have $|u_\v|> b/2$. 
 But this is not totally satisfying because, by a technical obstruction, we cannot cross over $\p\o$.

%Another way to continue this work is to improve the bound outside the vorticity lines in order to obtain for some $p\leq2$ a $W^{1,p}$-weak (or perhaps strong) convergence of the minimizers outside the vorticity lines. Namely it would be good to obtain the same estimate than in Theorem \ref{T5.MainSymCase}. And consequently the identification of the limit would be a natural question. 

This paper is divided as follows: 
\begin{enumerate}[$\bullet$]
\item We first present the decoupling of Lassoued-Mironescu and fundamental properties of the special solution $U_\v$ (Section \ref{S5.PropertiesOfSpecial}).
\item In Section \ref{S5.PropertiesGeomObject}, we define and describe the main geometrical objects in the study (minimal connection/length, geodesic curve/link ...). Once this done, we state the main results in Section \ref{S5.MainStatements}.
\item The proofs of the main results are sketched in Section \ref{S5.OutlineProofs}. In particular we explain how we may use \cite{Sandier1} and  \cite{BBM1}  and we underline  the required adaptations.
\item The heart of the argument is based on energetic estimates. In Section \ref{S5.thetwoUpperBounds} we proof upper bounds according to various assumptions (which give various error terms) and in Section \ref{S5.LowerBoundSandier} we obtain lower bounds. Section \ref{S5.StructureFunction} is  dedicated to the key tool used in Section \ref{S5.LowerBoundSandier}.
\item Section \ref{S5.DensityArgument} is devoted to the last argument in the proof of Theorem \ref{T5.Main1}.
\end{enumerate}

 \section{The decoupling of Lassoued-Mironescu}\label{S.Decupling-Lassoued-Mironescu}%{Description of the special solution $U_\v$}
 \label{S5.PropertiesOfSpecial}
Let $\overline{\omega}\subset\O\subset\R^3$ be two smooth bounded open sets s.t. $\O$ is convex and $\o$ is strictly convex. For $b\in(0,1)$ we define 
\begin{equation}\label{5Pinnineddef}
\begin{array}{cccc}
a:&\R^3&\to&\{b,1\}
\\
&x&\mapsto&\begin{cases}b&\text{if }x\in\o\\1&\text{otherwise}\end{cases}
\end{array}.
\end{equation}
We denote by $E_\v$ the Ginzburg-Landau functional with $a$ as pinning term, namely
\begin{equation}\nonumber%\label{5.GLpinning}
E_\v(u)=\frac{1}{2}\int_{\O}{\left\{|\n u(x)|^2+\frac{1}{2\v^2}\left[a(x)^2-|u(x)|^2\right]^2\right\}\,{\rm d}x}.
\end{equation}
%In \eqref{5.GLpinning}, $u\in\J:=\{u\in H^1(\O,\C)\,|\,|\tr_{\p\O}u|=1\}$.

For $\v>0$, we let (see \cite{LM1}) $U_\v$ be {\bf the} unique global minimizer of $E_\v$ in 
\[
H^1_1:=\{u\in\ H^1(\O,\C)\,|\,\tr_{\p\O}u\equiv1\}.
\] 

In the following, we will denote also $U_\v\in H_{\rm loc}^1(\R^3,\C)$ the extension by $1$ of the unique global minimizer of $E_\v$ in $H^1_1$.

\begin{prop}\label{P5.Fond1}
Let $U_\v$ be a minimizer of $E_\v$ in $H^1_1$, then $U_\v$ is unique and the following assertions are true 
\begin{enumerate}[1.]
\item $U_\v:\R^3\to[b,1]$ (from \cite{LM1}),
\item $\d-\Delta U_\v=\frac{1}{\v^2}U_\v(a^2-U_\v^2)$ in $\O$,
\item $\d E_\v(U_\v)\underset{\v\to0}{\sim}\frac{1}{\v^2}\int_\O{(a^2-U_\v^2)^2}\underset{\v\to0}{\sim}\frac{1}{\v}$ (same argument as in  \cite{LM1}),
\item There are $C,\gamma>0$ %et $R_0>0$ 
s.t. for $x\in\O$ we have (same proof as in \cite{Publi3} Proposition 2)
\begin{equation}\label{5.Uepsprochea}
|U_\v(x)-a(x)|\leq C\e^{-\gamma \dist(x,\p\o)/\v},%\text{ si }\dist(x,\p\o)\leq R
\end{equation}
\item If $v\in  H^1(\O,\C)$ is s.t. $|\tr_{\p\O}v|=1$ then $E_\v(U_\v v)=E_\v(U_\v)+F_\v(v)$ (same proof as \cite{LM1}) with
\begin{equation}\label{5.GLpoids}
F_\v(v)=\frac{1}{2}\int_\O{\left\{U_\v^2|\n v|^2+\frac{U_\v^4}{2\v^2}(1-|v|^2)^2\right\}},
\end{equation} 

\item If $v$ minimizes $F_\v$ in $H^1_g:=\{v\in H^1(\O,\C)\,|\,\tr_{\p\O}v=g\}$, then $|v|\leq1$ in $\O$ (same proof as \cite{LM1}).
\end{enumerate}
\end{prop}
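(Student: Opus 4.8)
The statement to prove is Proposition~\ref{P5.Fond1}, which collects six properties of the auxiliary minimizer $U_\v$. Most of these are attributed to existing references (\cite{LM1}, \cite{Publi3}), so the plan is mainly to assemble and verify the standard Lassoued--Mironescu decoupling arguments adapted to the discontinuous pinning term $a$, with the genuinely new point being the exponential localization estimate in item~4.

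\emph{Existence, uniqueness, and the range of $U_\v$ (items 1--2).} First I would establish existence by the direct method: $E_\v$ is coercive and weakly lower semicontinuous on the convex closed set $H^1_1$, so a minimizer exists. For uniqueness and the bound $U_\v\in[b,1]$, I would follow \cite{LM1}: truncation shows any minimizer satisfies $b\le U_\v\le 1$ (replacing $U_\v$ by $\min(U_\v,1)$ or $\max(U_\v,b)$ strictly decreases the potential where the constraint is violated while not increasing the Dirichlet energy, since $a$ takes values in $\{b,1\}$). Writing the Euler--Lagrange equation gives item~2, $-\Delta U_\v=\v^{-2}U_\v(a^2-U_\v^2)$ in $\O$; uniqueness then follows from the convexity structure of the problem after the change of variables that linearizes the functional in $U_\v^2$, exactly as in \cite{LM1}.

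\emph{Energy asymptotics and the decoupling (items 3, 5, 6).} Item~5 is a direct computation: expanding $E_\v(U_\v v)$ with $w=U_\v v$, integrating by parts using the equation in item~2 and the boundary condition $|\tr_{\p\O}v|=1$, the cross terms cancel and leave $E_\v(U_\v)+F_\v(v)$ with $F_\v$ as in \eqref{5.GLpoids}; I would be careful that the integration by parts is justified across $\p\o$ since $U_\v\in H^1$ globally and the singular measure from the jump of $a$ does not enter (the equation holds distributionally in all of $\O$). Item~6 follows from item~5 by the same truncation argument applied to $v$: if $|v|>1$ somewhere, replacing $v$ by $v/\max(|v|,1)$ decreases $F_\v$. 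For item~3, the scaling $E_\v(U_\v)\sim\v^{-1}$ comes from a one-dimensional layer analysis near $\p\o$, identical to \cite{LM1}.

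\emph{The main obstacle---exponential decay (item 4).} The hard part is the estimate $|U_\v(x)-a(x)|\le C\e^{-\gamma\,\dist(x,\p\o)/\v}$, which quantifies how fast $U_\v$ relaxes to its pinned value $a$ away from the discontinuity interface $\p\o$. The plan is to set $\phi_\v=a^2-U_\v^2$ (or $U_\v-a$), derive from item~2 a differential inequality of the form $-\Delta\phi_\v+c\,\v^{-2}\phi_\v\le 0$ on each side of $\p\o$ where $a$ is locally constant, using $b\le U_\v\le1$ to bound the coefficient $c$ below by a positive constant depending only on $b$. Then I would construct an explicit exponential barrier (comparison) function $\psi(x)=C\e^{-\gamma\,\dist(x,\p\o)/\v}$ and invoke the maximum principle to dominate $\phi_\v$ by $\psi$ on the region $\{\dist(x,\p\o)\ge \v\}$, after checking the matching conditions on the thin layer $\{\dist(x,\p\o)<\v\}$. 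The technical care is entirely in handling the interface $\p\o$: the comparison must be run separately inside $\o$ and outside $\o$, with the boundary data on $\p\o$ controlled by the crude global bound $|U_\v-a|\le 1-b$, and the smoothness of $\p\o$ (a consequence of the strict convexity hypothesis) guarantees $\dist(\cdot,\p\o)$ is a suitable Lipschitz, locally smooth weight for the barrier. This is the argument of \cite[Proposition 2]{Publi3} transplanted to the present geometry, so the emphasis is on verifying that the proof goes through verbatim under the stated convexity assumptions on $\O$ and $\o$.
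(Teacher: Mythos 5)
Your proposal is correct and takes essentially the same route as the paper, which gives no in-text proof but defers exactly to the arguments you reconstruct: truncation and the hidden convexity of the Dirichlet energy in the variable $U_\v^2$ for items 1--2 and 6, the Euler--Lagrange-based decoupling identity for item 5, the one-dimensional layer analysis at $\p\o$ for item 3 (all as in \cite{LM1}), and the differential inequality plus exponential barrier comparison on each side of $\p\o$ for item 4 (as in \cite{Publi3}, Proposition 2). One cosmetic slip: the smoothness of $\p\o$ is a standing hypothesis of the paper, not a consequence of strict convexity.
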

Assertion 5 of Proposition \ref{P5.Fond1} expresses that minimization of $E_\v$ is related with those of $F_\v$. This reformulation is standard in the context of pinned Ginzburg-Landau type energies. It allows to make a cleaning  of the energy in order to detect concentration of energy.

To understand this cleaning and our main results (Theorems \ref{T5.Main1}$-$\ref{T5.MainSymCase}) we may see the minimization of $E_\v$ w.r.t. the boundary condition $g\equiv1$ as the cheapest minimization problem in terms of the energy. Indeed, for $u\in H^1(\O,\C)$ s.t. $|\tr_{\p\O}u|\equiv1$, we have $E_\v(u)\geq E_\v(|u|)\geq E_\v(U_\v)\underset{\v\to0}{\sim}\frac{1}{\v}$. Note that with the special boundary condition $g\equiv1$ and in the study of a simplified Ginzburg-Landau energy without pinning term $E_\v^0$ (obtained from $E_\v$ by taking $a\equiv1$) we have that $U_\v^0$, {\bf the} global minimizer of $E_\v^0$ in $H^1_1$, is constant equal to $1$; and therefore $E_\v^0(U_\v^0)=0$. In the presence of a pinning term $a$ (given by \eqref{5Pinnineddef}), the special solution $U_\v$ carries a concentration of energy along the singular surface $\p\o$.  More precisely we may prove a refined version of Proposition \ref{P5.Fond1}.3: for $\eta>0$ we have 
\[
E_\v(U_\v)\underset{\v\to0}{\sim} E_\v(U_\v,\{x\in\O\,|\,\dist(x,\p\o)<\eta\})\underset{\v\to0}{\sim}\frac{1}{\v}
\]
and $E_\v(U_\v,\{x\in\O\,|\,\dist(x,\p\o)>\eta\})\to0$ when $\v\to0$. Here for $V$ an open subset of $\O$ we denoted $E_\v(U_\v,V)=\frac{1}{2}\int_V\left\{|\n U_\v|^2+\frac{1}{2\v^2}(a^2-U_\v^2)^2\right\}$.

One of the advantage of the Lassoued-Mironescu's decoupling is that, in the study of $E_\v(\cdot)=F_\v(\frac{\cdot}{U_\v})+E_\v(U_\v)$, the singular part of $E_\v$ is entirely carried by the fixed number $E_\v(U_\v)$. 

Our best results express this fact: for $g\in H^{1/2}(\p\O,\S^1)$ %and $u_\v$ a minimizer of $E_\v$ in $H^1_g$%, it is easy the singular behavior $E_\v$ (given by Assertion 3 of Proposition \ref{P5.Fond1}) is carried by the term we have:
\begin{enumerate}[$\bullet$]
\item The minimal energy takes a standard form: 
\[
\inf_{H^1_g}E_\v=E_\v(U_\v)+\inf_{H^1_g}F_\v=E_\v(U_\v)+C(g,a)|\ln\v|+o(|\ln\v|)
\]
(see Theorems \ref{T5.Main1}$\&$\ref{T5.MainSymCase}) where $E_\v(U_\v)\sim\v^{-1}$;
\item Under some hypotheses the concentration of the energy of $F_\v$ is uniform along curves (see Theorem \ref{T5.Main2}). These curves are called \emph{vorticity lines}.
\end{enumerate}
To end this section we may underline the interpretation of Lassoued-Mironescu's decoupling $E_\v(\cdot)=F_\v(\frac{\cdot}{U_\v})+E_\v(U_\v)$ in terms of concentration of energy:
\begin{enumerate}[$\bullet$]
\item At the first order ($\v^{-1}$), the interface $\p\o$ corresponds to an "artificial" singular surface whose energetic cost is contained in $E_\v(U_\v)$,
\item At the second order ($|\ln\v|$), minimizers for $F_\v(\cdot/U_\v)$ concentrate their energy along the vorticity lines.  These curves are described in Section \ref{S5.DescribtionSectionGeode}, they can be interpreted as "line segments" bent by the interface $\p\o$.
\end{enumerate}

\section{Minimal connections, geodesic links}\label{S5.PropertiesGeomObject}
In this section we define the main geometrical objects needed in the description of the vorticity lines.

In subsection \ref{S5.Distribution} we present a standard method which described the singularities of a map $g\in H^{1/2}(\p\O,\S^1)$: to a map $g\in H^{1/2}(\p\O,\S^1)$ we associate a distribution $T_g$. In Proposition \ref{P5.BBM1AuxProp} we present the main properties of $T_g$ used in this article. We define also an important dense set of $H^{1/2}(\p\O,\S^1)$.

In subsection \ref{S5.DescribtionSectionGeode} we state some definitions and we describe the metric $d_{a^2}$ which plays an important role in this study. 
\subsection{Length of a minimal connection of a map $g\in H^{1/2}(\p\O,\S^1)$}\label{S5.Distribution}
For $g\in H^{1/2}(\p\O,\S^1)$, following \cite{BBM1}, one may associate to $g$ a continuous linear form 
\[
T_g:({\rm Lip}(\p\O,\R),{\|\cdot\|_{\rm Lip}})\to\R.
\]
Here  ${\rm Lip}(\p\O,\R)$ is the space of Lipschitz functions equipped with the standard norm $\|\varphi\|_{\rm Lip}=\|\varphi\|_{L^\infty}+\displaystyle\sup_{\substack{x,y\in\p\O\\x\neq y}}\frac{|\varphi(x)-\varphi(y)|}{|x-y|}$ with $|x-y|=\deucl(x,y)$ is the euclidean distance in $\R^3$ between $x$ and $y$. 

%Note that $\deucl$ is an equivalent distance with $\deucl^{\p\O}:\p\O\times\p\O\to\R^+$, the geodesic distance on $\p\O$ in the euclidean metric.

For $g\in H^{1/2}(\p\O,\S^1)$, the map $T_g:{\rm Lip}(\p\O,\R)\to\R$ is defined by the following way: let $\varphi\in\Lip(\p\O,\R)$
\begin{enumerate}[$\bullet$]
\item fix $u\in H^1_g$ and consider $H=2(\p_2 u\wedge\p_3 u\,,\,\p_3 u\wedge\p_1 u\,,\,\p_1 u\wedge\p_2 u)$;
\item  fix $\phi\in{\rm Lip}(\O,\R)$ s.t. $\phi=\varphi$ on $\p\O$;
\end{enumerate}
then $\displaystyle\int_{\O}{H\cdot\n\phi}$ is independent of the choice of $u$ and $\phi$.

Therefore we may define the continuous linear form \[
\begin{array}{cccc}
T_g:&{\rm Lip}(\p\O,\R)&\to&\R\\&\varphi&\mapsto&\displaystyle\int_{\O}{H\cdot\n\phi}
\end{array}.
\]

\begin{notation}
Here "$\wedge$" stands for the "vectorial product" in $\C$: $(x_1+\imath y_1)\wedge(x_2+\imath y_2)=x_1y_2-x_2y_1$, $x_1,x_2,y_1,y_2\in\R$.
\end{notation}
Following \cite{BBM1}, we denote, for $g\in H^{1/2}(\p\O,\S^1)$ and $d$ an equivalent distance with $\deucl$ on $\p\O$, 
\begin{equation}\label{5.DeflongMin1}
L(g,d):=\frac{1}{2\pi}\sup\left\{T_g(\varphi)\,\left|\,|\varphi|_{d}\leq1\right.\right\}=\frac{1}{2\pi}\max\left\{T_g(\varphi)\,\left|\,|\varphi|_{d}\leq1\right.\right\}
\end{equation}
with 
\[
|\varphi|_{d}:=\sup_{\substack{x\neq y\\x,y\in\p\O}}\frac{|\varphi(x)-\varphi(y)|}{d(x,y)}.
\]
Note that, since $T_g:({\rm Lip}(\p\O,\R),{\|\cdot\|_{\rm Lip}})\to\R$ is continuous and $d,\deucl$ are equivalent on $\p\O$, then  $L(g,d)$ is finite.

In the spirit of \cite{LR1},\cite{Sandier1} and \cite{BBM1} we deal with prepared boundary conditions $g$'s. In this article we use the dense subset  $\mathcal{H}\subset H^{1/2}(\p\O,\S^1)$
\begin{equation}\label{TheNumberOfTheDenseSet}
\mathcal{H}=\left\{g\in \bigcap_{1\leq p<2}{W^{1,p}(\p\O,\S^1)}\,\left|\,\begin{array}{c}\text{$g$ is smooth outside a finite set $\cl$,  }\\\forall M \in\cl\text{ we have for $x$ close to $ M $:}\\|\n g(x)|\leq C/|x- M |,\\\exists\,R_ M \in\mathcal{O}(3)\text{ s.t. }\left|g(x)-R_ M \left(\frac{x- M }{|x- M |}\right)\right|\leq C|x- M |.\end{array}\right.\right\}.
\end{equation}
Here we considered $\S^1\simeq\S^1\times\{0\}\subset \S^2$. 

%Note that under theses conditions on the set $\cl$, we have $\Card(\cl)\in2\N$ and 
One may define $\deg(u, M)$, the topological degree of $u$ with respect to $M$: if $ R_ M \in\mathcal{O}(3)^+$ then $\deg(u, M)=1$ otherwise $\deg(u, M)=-1$.

In order to justify the term of "degree", assume that in a neighborhood of $M\in\cl$, $\p\O$ is flat. Then, for $r>0$ sufficiently small, $C=\p B(M,r)\cap\p\O$ is a circle centered in $M$. This circle has a natural orientation induced by $B(M,r)\cap\O$. Thus $g_{|C}\in C^\infty(C,\S^1)$ admits a well defined topological degree (see \emph{e.g.} \cite{B1}), and this degree does not depend on small $r$.

We may partition the set $\cl$ into two sets: the positive singularities and the negative singularities. We consider
\[
\Pos=\{ M \in\cl\,|\,\deg(u, M )=1\}\text{ and }\Neg=\{ M \in\cl\,|\,\deg(u, M )=-1\}.
\]
One may also consider for $g\in\mathcal{H}$ the degree of $g$ with respect to $\p U$ for $U$ a non empty smooth open set of $\p\O$ s.t. $\p U$ does not contain any singularities of $g$. This degree is defined as % by
\begin{equation}\label{5.DefDegOuvert}
\deg(g,\p U)=\Card(\{p\in \Pos\,|\,p\in U\})-\Card(\{n\in \Neg\,|\,n\in U\}).
\end{equation}

From \cite{BBM1}, we have the following
\begin{prop}\label{P5.BBM1AuxProp}
Let $g,h\in H^{1/2}(\p\O,\S^1)$, then we have
\begin{enumerate}[1.]
\item $T_{gh}=T_g+T_h$ and $T_{\overline{g}}=-T_g$ (Lemma 9 \cite{BBM1}),
\item $|(T_g-T_h)(\varphi)|\leq C|g-h|_{H^{1/2}}(|g|_{H^{1/2}}+|h|_{H^{1/2}})|\varphi|_{\deucl}$, $\varphi\in{\rm Lip}(\p\O,\R)$ (Lemma 9 \cite{BBM1}),
\item $\mathcal{H}$ is dense in $H^{1/2}(\p\O,\S^1)$ (Lemma B.1 \cite{BBM1}),
\item if $u\in\mathcal{H}$, then $\Card(\Pos)=\Card(\Neg)$ and $T_g=\displaystyle2\pi\sum_{p\in \Pos}\delta_{p}-2\pi\sum_{n\in \Neg}\delta_{n}$ (Lemma 2 \cite{BBM1}),
\item if $u\in\mathcal{H}$, then $L(g,d)=\min_{\sigma\in S_k}\sum_i{d(p_i,n_{\sigma(i)})}$ where $d$ is a distance equivalent with $\deucl$ on $\p\O$ (Theorem 1 \cite{BBM1}). Here $S_k$ is the set of the permutations of $\{1,...,k\}$.
%\item $e_\v(g):=\displaystyle\inf_{u\in H^1_g}{\frac{1}{2}\int_{\O}{|\n u|^2+\frac{1}{2\v^2}(1-|u|^2)^2}}=\pi L_{}(g,d_{\rm eucl})|\ln\v|+o(|\ln\v|)$ (puisque $\O$ est convexe).
%\item $
\end{enumerate}
\end{prop}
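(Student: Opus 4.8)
The plan is to follow the arguments of Bourgain--Brezis--Mironescu, whose common engine is the representation of $T_g$ through the current $j(u)=(u\wedge\p_1u,u\wedge\p_2u,u\wedge\p_3u)$. A direct computation gives $H=\operatorname{curl}\,j(u)$, so that $\Div H\equiv0$ identically; integrating by parts in $\int_\O H\cdot\n\phi$ then reduces $T_g(\varphi)$ to a boundary pairing $\langle\operatorname{curl}\,j\cdot\nu,\varphi\rangle_{\p\O}$, whose density is the tangential (surface) curl of the tangential current $g\wedge\n_\tau g$ and therefore depends on $g$ alone. This simultaneously re-proves the extension-independence built into the definition of $T_g$ and exhibits $T_g$ as carried by the tangential vorticity of $g$ on $\p\O$.

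With this picture, assertion 1 is immediate: for unit maps $g=e^{\imath\psi}$, $h=e^{\imath\chi}$ the tangential current is $g\wedge\n_\tau g=\n_\tau\psi$, which is additive under products (phases add to $\psi+\chi$) and odd under conjugation ($\psi\mapsto-\psi$); hence $T_{gh}=T_g+T_h$ and $T_{\overline g}=-T_g$. For assertion 2 I would take $u,v$ to be the harmonic extensions of $g,h$, so that $\|\n u\|_{L^2(\O)}\simeq|g|_{H^{1/2}(\p\O)}$ and likewise for $v$; since $H_g-H_h$ is bilinear in $(\n(u-v)\,,\,(\n u,\n v))$, a polarization bounds it in $L^1$ by $\|\n(u-v)\|_{L^2}(\|\n u\|_{L^2}+\|\n v\|_{L^2})$, and pairing against $\n\phi$ with $\|\n\phi\|_{L^\infty}\le|\varphi|_{\deucl}$ produces the stated quantitative estimate.

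For assertion 3 I would invoke the approximation theory of Sobolev maps into $\S^1$: splitting any higher or diffuse boundary vorticity into unit dipoles and smoothing elsewhere produces maps that are smooth off a finite set, with the model $\pm1$ singularities $R_M\frac{x-M}{|x-M|}$ of \eqref{TheNumberOfTheDenseSet}, and one checks this family is dense in $H^{1/2}(\p\O,\S^1)$. For assertion 4 I would compute $T_g$ for $g\in\H$ directly from the first paragraph: away from $\cl$ the map is smooth and its tangential vorticity vanishes, while a neighborhood of each $M\in\cl$ contributes $\pm2\pi\delta_M$ according to the sign of $\det R_M$, whence $T_g=2\pi\sum_{p\in\Pos}\delta_p-2\pi\sum_{n\in\Neg}\delta_n$; the balance $\Card(\Pos)=\Card(\Neg)$ is the vanishing of the total tangential vorticity integrated over the closed surface $\p\O$ (Stokes, no boundary).

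Finally, substituting the quantized form of $T_g$ into \eqref{5.DeflongMin1} gives $L(g,d)=\sup\{\sum_i\varphi(p_i)-\sum_j\varphi(n_j)\,:\,|\varphi|_d\le1\}$, and this is exactly the Kantorovich--Rubinstein duality whose value is the optimal $d$-matching $\min_{\sigma\in S_k}\sum_i d(p_i,n_{\sigma(i)})$, as in Brezis--Coron--Lieb. I expect the main obstacle to be this last assertion (and, quantitatively, assertion 2): the duality requires producing an admissible $1$-Lipschitz potential realizing a prescribed matching, which rests on the triangle inequality for $d$ and on handling the fact that $d$ is only equivalent to $\deucl$ rather than euclidean, while the attainment of the supremum (the $\max$ in \eqref{5.DeflongMin1}) must be secured by an Arzel\`a--Ascoli compactness argument over competitors normalized by $|\varphi|_d\le1$.
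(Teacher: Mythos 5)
The paper does not actually prove this proposition: it is a list of citations (Lemmas 9, B.1, 2 and Theorem 1 of \cite{BBM1}, with Lemma 4.2 of \cite{BCL1} standing behind assertion 5), so your reconstruction is to be measured against those cited proofs. Your overall architecture does match them: the identity $H=\operatorname{curl}j(u)$ with $j(u)=(u\wedge\p_1u,u\wedge\p_2u,u\wedge\p_3u)$, hence $\Div H=0$, is precisely the engine behind the well-definedness and quantization of $T_g$; the harmonic-extension/bilinearity argument for assertion 2 is the \cite{BBM1} proof essentially verbatim (the extension $\phi$ with $\|\n\phi\|_{L^\infty}\leq|\varphi|_{\deucl}$ being supplied by a McShane extension); and reducing assertion 5, via the quantized form of $T_g$, to the duality of \cite{BCL1} on the finite set $\cl$ — with a $d$-Lipschitz McShane extension producing admissible potentials on $\p\O$, and equivalence of $d$ with $\deucl$ guaranteeing they are legitimate test functions for $T_g$ — is the intended route. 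Your identification of where the real work lies (assertions 2, 3, 5) is accurate.

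There is, however, one step that fails as written: your proof of assertion 1 via global phases $g=e^{\imath\psi}$, $h=e^{\imath\chi}$ with $g\wedge\n_\tau g=\n_\tau\psi$. A global lifting of this kind does not exist for general $g\in H^{1/2}(\p\O,\S^1)$ — the failure of $H^{1/2}$ lifting in this regime is one of the central points of \cite{BBM1} — and the claim is self-defeating: since $T_g$ acts as the distributional curl of the tangential current, if $g\wedge\n_\tau g$ were a gradient $\n_\tau\psi$ then $T_g$ would vanish identically, contradicting your own assertion 4 for any $g$ with singularities. The conclusion is salvageable without liftings: for $\S^1$-valued maps one has the pointwise identity $\overline{gh}\,\n(gh)=\overline{g}\,\n g+\overline{h}\,\n h$, i.e. $j(gh)=j(g)+j(h)$, and no phase is needed; alternatively (and this is the actual route in \cite{BBM1}, whose ingredients your plan already contains) one proves additivity on the dense class, where local liftings exist away from the finite singular set, and extends to all of $H^{1/2}$ by combining the continuity estimate of assertion 2 with the density of assertion 3. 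In the same vein, your bulk-to-boundary reduction of $\int_\O H\cdot\n\phi$ to a pairing $\langle\operatorname{curl}j\cdot\nu,\varphi\rangle_{\p\O}$ should be flagged as heuristic: $j(u)$ admits no trace for a general extension $u\in H^1_g$, so the rigorous versions of assertions 1 and 4 must pass through approximation rather than through a literal boundary integral.
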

\subsection{Minimal connections, minimal length, geodesic links}\label{S5.DescribtionSectionGeode}
In the last assertion of Proposition \ref{P5.BBM1AuxProp}, we used the notion of length of a minimal connection w.r.t. a distance $d$.

Namely, consider $d$ a distance on $\cl=\Pos\cup \Neg$, $\Pos, \Neg\subset\R^3$ two sets of $k$  distinct points s.t. $\Pos\cap \Neg=\emptyset$, $\Pos=\{p_1,...,p_k\}$ and $\Neg=\{n_1,...,n_k\}$. 

\begin{definition}We denote by $L(\cl,d)$ the {\it length of a minimal connection} of $\cl$ in $(\cl,d)$, \emph{i.e.}, 
\begin{equation}\label{5.longcon}
L(\cl,d)=\min_{\sigma\in S_k}{\sum_{i=1}^k{d(p_i,n_{\sigma(i)})}}.
\end{equation}
\end{definition}
In \cite{BCL1} (Lemma 4.2), the authors proved that
\begin{equation}
L(\cl,d)=\max\left\{\left.\sum_{i=1}^k\left\{\varphi(p_i)-\varphi(n_i)\right\}\,\right|\,\varphi:\cl\to\R,\,|\varphi|^\cl_d\leq1\right\}
\end{equation}
with
\[
|\varphi|^\cl_{d}=\sup_{\substack{x\neq y\\x,y\in\cl}}\frac{|\varphi(x)-\varphi(y)|}{d(x,y)}.
\]

\begin{definition}A permutation $\sigma$ s.t. $\sum_i{d(p_i,n_{\sigma(i)})}=L(\Pos\cup \Neg,d)$ is called a {\it minimal connection} of $(\Pos\cup \Neg,d)$.
\end{definition}
In the following we will consider a special form of distance $d$ on $\p\O$: a geodesic distance in $\overline{\O}$ equipped with a metric that we will describe below.
%Pour $\v>0$, on considère la fonctionnelle de type G-L à poids
%\begin{equation}\label{4.poidsG}
%G_\v(u)=\frac{1}{2}\int_\O{A^2|\n u|^2+\frac{A^4}{2\v^2}(1-|u|^2)^2},\,v\in\J:=\{u\in H^1(\O,\C)\,|\,|\tr_{\p\O}u|=1\}.
%\end{equation}
%\subsection{Metric with}

Let us first introduce some notations. Let $f:\R^3\to[b^2,1]$ be a Borel function and let $\Gamma\subset\overline{\O}$ be  Lipschitz curve. We denote by ${\rm long}_f(\Gamma)$ the length of $\Gamma$ in the metric $fh$ (here $h$ is the euclidean metric in $\R^3$), \emph{i.e.}, 
\[
{\rm long}_f(\Gamma):=\int_0^1f\left[\gamma(s)\right]\,|\gamma'(s)|{\rm d}s,\,\gamma:[0,1]\to\Gamma\text{ is an admissible parametrization of  $\Gamma$}.
\]
In this paper, when we consider a curve (or arc) $\Gamma$, it will be implicitly that it is a Lipschitz one.

We define $d_f$ as the geodesic distance in $f h$ ($h$ is the euclidean metric in $\R^3$).

Thus, for $x,y\in\R^3$, $x\neq y$, we have
\begin{equation}\label{5.DefDistancea^2}
d_f(x,y)=\inf_{\substack{\Gamma\text{ Lipschitz arc }\\\text{with endpoints $x,y$}}}{\rm long}_f(\Gamma).
\end{equation}

%Note that by convexity of $\overline{\O}$, $\deucl(x,y)=|x-y|$ is the geodesic distance in the euclidean metric \emph{i.e.}, $\deucl=d_f$ with $f\equiv1$. 

In the special case $f=a^2$, one may easily prove the following proposition
\begin{prop}\label{P5.ClassificationGeodesiquespecialcase}
Let $x,y\in\R^3$, $x\neq y$. The following assertions are true
\begin{enumerate}[1.]
\item In \eqref{5.DefDistancea^2} the \emph{infimum} is attained. 
\item[]We denote by $\Gamma_0 $ a minimal curve in \eqref{5.DefDistancea^2}.
\item If $x,y\in\overline{\O}$ then a geodesic $\Gamma_0$ is included in $\overline{\O}$.
\item A geodesic $\Gamma_0$ is a union of at most three line segments: $\Gamma_0 =\cup_{i=1}^l S_i$%$k=k(\Gamma_0 )\in\{1,2,3\}$ segment(s) $S_i$. In this proposition, $S_i$ is closed, $x\in S_1$, $y\in S_k$. 
\item These line segments are such that
\begin{enumerate}[a.]
\item if $x,y\in\overline{\o}$ then $l=1$,
\item if $l=2$ then $S_1\cap S_2\subset\p\o$,
\item if $l=3$ then $x,y\in\R^3\setminus\overline{\o}$ and $S_2$ is a chord of $\o$,
\item if $[x,y]\cap\overline{\o}=\{z\}$ then $l\in\{2,3\}$.
\end{enumerate}
\item For $M\in\R^3$ we have $\psi_M=d_{a^2}(\cdot,M)\in W_{\rm loc}^{1,\infty}(\R^3,\R)$ and $|\n \psi_M|=a^2$.
\end{enumerate}
\end{prop}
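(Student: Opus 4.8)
The plan is to exploit that $a^2$ takes only the two constant values $b^2$ on $\o$ and $1$ on $\R^3\setminus\overline\o$, so that ${\rm long}_{a^2}$ reduces in each region to a constant multiple of euclidean length; the whole geometry is then governed by a refraction (Snell-type) phenomenon at the interface $\p\o$, tamed by the strict convexity of $\o$. First I would reduce an arbitrary competitor to a piecewise-linear one and simultaneously extract the structure and existence. Given a Lipschitz competitor $\Gamma$, I straighten it inside each region: on a maximal subarc contained in $\o$ (resp. in $\R^3\setminus\overline\o$), replacing the subarc by the euclidean segment joining its endpoints does not increase ${\rm long}_{a^2}$, since the weight is constant there while the euclidean length only decreases (using convexity of $\o$ in the interior case, and that the endpoints lie on $\p\o$ in the exterior case). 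The decisive step is a shortcut argument: if a straightened path contained two interior chords joined by an exterior arc running from $P$ to $Q$ with $P,Q\in\p\o$, I replace the whole portion from $P$ to $Q$ by the straight segment $[P,Q]$; by convexity $[P,Q]\subset\overline\o$, so it lies in the cheaper region (weight $b^2<1$) and is strictly shorter, whence its cost is strictly smaller, contradicting minimality. This forces at most one interior chord; likewise at most one exterior segment on each side. Since at each breakpoint the path must pass from one region to the other (two consecutive segments in the same region would be collinear and merge), the segments alternate, the breakpoints lie on $\p\o$, and the only admissible patterns are $[\mathrm{int}]$, $[\mathrm{ext}]$, $[\mathrm{int},\mathrm{ext}]$, $[\mathrm{ext},\mathrm{int}]$ and $[\mathrm{ext},\mathrm{int},\mathrm{ext}]$. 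This yields parts 3 and 4b at once, and existence (part 1) follows because minimization is reduced to a finite-dimensional problem over the compact set of admissible crossing points in $\p\o$, on which ${\rm long}_{a^2}$ is continuous and coercive.

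For part 2, let $x,y\in\overline\O$ and let $\Pi$ be the nearest-point projection onto the convex set $\overline\O$, which is $1$-Lipschitz. I compare any competitor $\gamma$ with $\Pi\circ\gamma$. Since $\overline\o\subset\O$, every point outside $\overline\O$ carries weight $1$ and is sent by $\Pi$ to $\p\O$, which also carries weight $1$, while points of $\overline\O$ are fixed; hence $a^2(\Pi(\gamma(s)))=a^2(\gamma(s))$ for a.e.\ $s$, and $|(\Pi\circ\gamma)'|\le|\gamma'|$, so ${\rm long}_{a^2}(\Pi\circ\gamma)\le{\rm long}_{a^2}(\gamma)$. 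Thus a geodesic may be taken inside $\overline\O$.

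The remaining cases of part 4 I would read off from the alternation structure together with direct comparisons. For 4a, any path satisfies ${\rm long}_{a^2}(\gamma)\ge b^2\,\mathrm{length}(\gamma)\ge b^2|x-y|$, with equality for the chord $[x,y]\subset\overline\o$, so $l=1$. For 4c, a three-segment geodesic must follow the pattern $[\mathrm{ext},\mathrm{int},\mathrm{ext}]$, so $S_2$ is the unique interior chord and the endpoints $x,y$ lie in $\R^3\setminus\overline\o$. For 4d, if $[x,y]$ meets $\overline\o$ in a single point $z$, a first-order perturbation that bends the path into $\o$ near $z$ trades weight $1$ for weight $b^2$ on a segment of positive length and strictly lowers the cost; hence the straight segment is not optimal, $l\ge 2$, and by the bound $l\le 3$ we get $l\in\{2,3\}$. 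Finally, for part 5, comparing with straight segments gives $b^2|x-y|\le d_{a^2}(x,y)\le|x-y|$, so $\psi_M=d_{a^2}(\cdot,M)$ is globally Lipschitz, in particular $\psi_M\in W^{1,\infty}_{\rm loc}(\R^3,\R)$. For the eikonal identity $|\n\psi_M|=a^2$ I localize: on the open set $\o$ (resp.\ $\R^3\setminus\overline\o$) the weight is the constant $c=b^2$ (resp.\ $c=1$), and in a small ball contained in that region $d_{a^2}$ coincides with $c$ times the euclidean distance, so the standard value-function argument yields $|\n\psi_M|=c=a^2$ a.e.\ there; since $\p\o$ is negligible, the identity holds a.e.\ in $\R^3$.

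The main obstacle is the discontinuity of $a^2$ across $\p\o$: making the straightening-plus-shortcut reduction rigorous for a competitor that may meet $\p\o$ in a complicated set, and ensuring no gain can come from running along the interface (where the weight jumps up to $1$). Strict convexity of $\o$ is precisely what caps the number of interior chords and rules out such boundary-hugging behavior, so the heart of the work is to convert these geometric facts into the clean finite-segment reduction described above.
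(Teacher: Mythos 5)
First, a point of reference: the paper offers \emph{no} proof of this proposition --- it is stated with ``one may easily prove'' and then used throughout --- so there is no written argument to compare yours against. Your proposal supplies exactly the kind of proof the author presumably had in mind, and its structural core is sound: straightening within each region of constant weight, the convexity shortcut (for $P,Q\in\p\o$ the segment $[P,Q]$ lies in $\overline\o$ and, by strict convexity, its interior lies in $\o$, so replacing any exterior excursion between two boundary points by $[P,Q]$ strictly lowers the cost from at least $|P-Q|$ to $b^2|P-Q|$) forcing a single interior chord; alternation of regions at breakpoints; the projection argument for assertion 2 (which in fact also follows a posteriori from assertions 3--4, since all breakpoints lie on $\p\o\subset\O$ and $\O$ is convex); the sharp bound $b^2|x-y|$ for 4a; the dip argument for 4d; and the localization for the eikonal identity in 5. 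Two quantitative remarks: in 4d the gain from dipping to depth $h$ is of order $\sqrt{h}$ (by the uniform interior ball condition of the smooth strictly convex $\o$) against an excess euclidean length $O(h^{3/2})$, so the conclusion is right even though the perturbation is not literally first order; and in 5, for the exterior region the ball must be small \emph{compared with} $\dist(x_0,\overline\o)$ --- for exterior points too close to $\p\o$ a detour through $\o$ beats the euclidean segment, so ``small ball contained in the region'' needs this quantifier.

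The one genuine soft spot is the one you flag yourself and leave open: the rigorous reduction of a competitor meeting $\p\o$ in a complicated set, together with a mild circularity (you derive the segment structure by ``contradicting minimality'' before existence is known). Both are fixed at once by an observation that defuses your worry about boundary-hugging: since $a=b$ only on the \emph{open} set $\o$, one has $a^2\equiv1$ on $\p\o$, so running along the interface costs exactly as much as running outside. Then, for any Lipschitz competitor $\gamma$ from $x$ to $y$ that enters $\overline\o$, let $t_1,t_2$ be the first and last times in $\overline\o$ and $P=\gamma(t_1)$, $Q=\gamma(t_2)$; splitting the integral gives, with no straightening of messy arcs at all,
\begin{equation*}
{\rm long}_{a^2}(\gamma)\ \geq\ |x-P|+b^2|P-Q|+|Q-y|,
\qquad\text{whence}\qquad
d_{a^2}(x,y)=\min\Bigl\{|x-y|,\ \min_{P,Q\in\overline\o}\bigl(|x-P|+b^2|P-Q|+|Q-y|\bigr)\Bigr\},
\end{equation*}
the upper bound coming from the three-segment competitor $[x,P]\cup[P,Q]\cup[Q,y]$ (whose cost is at most the right-hand side, since crossing $\o$ only lowers the weight). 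Existence (assertion 1) now follows from compactness of $\overline\o\times\overline\o$ without any circularity; at an optimal pair, sliding $P$ along $[x,P]$ shows $P^*,Q^*\in\p\o$ when $x,y\notin\overline\o$ (cost change at most $-s+b^2s<0$ otherwise), and optimality forces the outer segments to avoid $\o$ (else their cost would be strictly below $|x-P^*|$, contradicting minimality of the value), which recovers assertions 3 and 4 and your alternation pattern. With this replacement for the reduction step, your argument is complete; everything else in your proposal stands as written.
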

%On suppose à présent que $x,y\in\p\O$. On note 
%\[
%\tilde\gamma:=\{x\gamma_0 y\,|\,\text{$x\gamma_0 y$ réalise le minimum dans \eqref{5.DefDistancea^2}}\}
%\]
%On dit que $x,y$ sont dits critiques (resp. stable) lorsqu'il existe $\gamma_1,\gamma_2\in\tilde\gamma$ tels que $k(\gamma_1)\neq k(\gamma_2)$ (resp. lorsque $k(\gamma_0)={\rm Cste},\,\gamma_0\in\tilde{\gamma}$).

%\subsection{Connexion minimal de $2k$ points dans un espace métrique et courant minimal}
\begin{definition}
In the case $d=d_{a^2}$ and $\cl=\Pos\cup \Neg\subset\p\O$, we say that $\cup_i{\Gamma_i}$ is a {\it geodesic link} when $\sigma$ is a minimal connection in  $(\cl,d_{a^2})$ and $\Gamma_i$ is a geodesic joining  $p_i$ to $n_{\sigma(i)}$.
\end{definition}
 In Figure \ref{F5.GeodesicLink}, we have represented a geodesic link for $k=2$ and a certain $b\in(0,1)$. %(This figure is an illustration: the geodesics are not been computed) 
\begin{remark}\label{R5.GeodLink}We consider the situation $d=d_{a^2}$ and $\cl=\Pos\cup \Neg\subset\p\O$.
\begin{enumerate}
\item If the minimal connection is unique, then, letting $\cup_i{\Gamma_i}$ be a geodesic link, we have $\Gamma_i\cap\Gamma_j=\emptyset$ for $i\neq j$. In particular, if the geodesic link $\cup_i{\Gamma_i}$ is unique then  $\Gamma_i\cap\Gamma_j=\emptyset$ for $i\neq j$.
\item If a geodesic link $\cup_i{\Gamma_i}$ is s.t. for all $i$ $\Gamma_i$ is a line segment, then we have $\Gamma_i\cap\Gamma_j=\emptyset$ for $i\neq j$.
\end{enumerate}
\end{remark}
\begin{figure}[h]
\begin{center}
        \includegraphics[width=15cm]{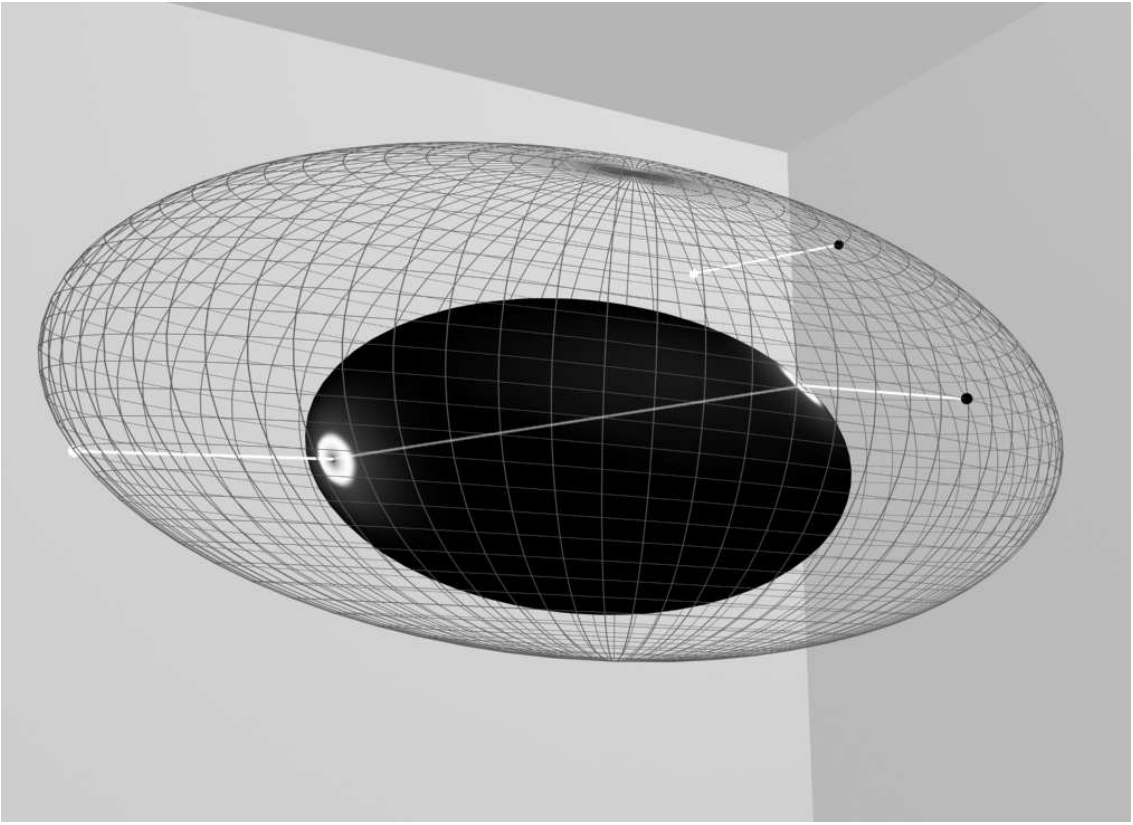}
\end{center}\caption[Illustration of a geodesic link]{Illustration of a geodesic link with $k=2$: the boundary of $\O$ is in wire, the one of $\o$ is black filled, the positive points are white,  the negative ones are black and a geodesic link is represented in white. The shaded off portion on the two penetration points gives indications about the $3D$-geometry of the geodesic link and of the inclusion. (Courtesy of Alexandre Marotta)}\label{F5.GeodesicLink}
      \end{figure}

% and $\underline{\Gamma_i}$ is the $1$-dimensional integration current on $\Gamma_i$.

%Soit $k\geq1$, $\cl=\{p_1,...,p_k,n_1,...,n_k\}\subset\p\O$, un ensemble de $2k$ points de $\p\O$ deux à deux distincts et $g\in H^{1/2}(\p\O,\S^1)$. On dit que $\cl$ est l'ensemble des singularités de $g$ lorsque $g$ est $C^\infty$ en dehors de $\cl$. La fonction $g$ n'est pas trop horrible lorsque pour tout $M\in \cl$, il existe $r>0$ tel que
%\begin{enumerate}[i-\,]
%\item $\d t\mapsto\int_{C_{r,t}(M)}|\n g|^2$ est intégrable dans $[0,r]$, $C_{r,t}(M)=\left(B_r(M)\setminus B_t(M)\right)\cap\p\O$,
%\item il existe $R\in\mathcal{O}(3)$ tel que pour $x\in B_r(M)\cap\p\O$ on a
%\[
%\left|g(x)-R\frac{x-M}{|x-M|}\right|<\frac{1}{4},
%\]
%\item $R\in \mathcal{SO}(3)$ si et seulement si $M\in P=\{p_1,...,p_k\}$.
%\end{enumerate}
%Dans la deuxième condition, on a considéré $\S^1\subset \S^2$.

%Pour $U\subset\O$, $U$ ouvert tel que $\p U\cap\cl=\emptyset$, on définit
%\[
%\deg(g,\p U):=|\{i\,|\,p_i\in U\}|-|\{i\,|\,n_i\in U\}|
%\]

%On supposera systématiquement que $\sigma_0=\text{Id}$ est un connection minimal, i.e., que l'identité réalise le minimum dans \eqref{5.longcon}.

%Pour $f:\R^3\to\R^+$, on définis $d_f$ comme étant la distance géodésique dans $\R^3$ associée à la métrique $f h$, ($h$ est la métrique euclidienne dans $\R^3$).

%Ainsi,
%\[
%d_f(x,y)=\inf_{x\gamma y}\int_0^1{|\gamma'(s)|f(\gamma(s))\di s}.
%\]
%Pour simplifier on notera $L(\cl,d_f)=L_f(\cl)$.

\section{The main results}\label{S5.MainStatements}
We now state our main results. 

The first result (Theorem \ref{T5.Main1}) is an energetic estimation of $\inf_{H^1_g}F_\v$ in the spirit of  \cite{LR1}, \cite{LR2}, \cite{Sandier1} and \cite{BBM1}. This result is stated for the most general boundary condition: $g\in H^{1/2}(\p\O,\S^1)$.

The second theorem (Theorem \ref{T5.Main2}) is valid for the prepared boundary condition: $g\in\mathcal{H}$. This result expresses that for solutions of $\inf_{H^1_g}F_\v$, a concentration of the leading part of the energy occurs in a small neighborhood of the geodesic link of $\cl$ in $(\R^3,d_{a^2})$. 

The last result (Theorem \ref{T5.MainSymCase}) gives a more precise result than Theorems  \ref{T5.Main1}$\&$\ref{T5.Main2}  (a bounded error term instead of a $o(|\ln\v|)$-error term) under strong symmetric hypothesis.
\begin{thm}\label{T5.Main1}Let $g\in H^{1/2}(\p\O,\S^1)$. Then we have
\[
\inf_{H^1_g}{F_{\v}}=\pi L(g,d_{a^2})|\ln\v|+o(|\ln\v|).
\]
\end{thm}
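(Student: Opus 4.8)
The plan is to prove Theorem~\ref{T5.Main1} by establishing matching upper and lower bounds for $\inf_{H^1_g}F_\v$, exploiting the decoupling structure and the density of $\mathcal{H}$ in $H^{1/2}(\p\O,\S^1)$. I would first reduce to the case $g\in\mathcal{H}$ and then extend by density. The reduction to $\mathcal{H}$ is natural because for prepared boundary conditions the singularity structure is explicit: by Proposition~\ref{P5.BBM1AuxProp}.4 and \ref{P5.BBM1AuxProp}.5, $T_g$ is a finite sum of Dirac masses at $\Pos\cup\Neg$ and $L(g,d_{a^2})=\min_{\sigma\in S_k}\sum_i d_{a^2}(p_i,n_{\sigma(i)})$ is a genuine minimal connection length. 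The density extension would then use Proposition~\ref{P5.BBM1AuxProp}.2 (the Lipschitz estimate on $T_g-T_h$) together with the stability of $L(\cdot,d_{a^2})$ under $H^{1/2}$-perturbations of the boundary data; this is deferred to Section~\ref{S5.DensityArgument}.

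\emph{Upper bound.} For $g\in\mathcal{H}$ I would construct an explicit competitor. The idea is to place the vortex filaments of $v$ along a geodesic link $\cup_i\Gamma_i$ realizing $L(g,d_{a^2})$, where each $\Gamma_i$ joins $p_i$ to $n_{\sigma(i)}$ for a minimal connection $\sigma$. Around each geodesic curve one builds a map carrying unit degree in the transverse direction, costing roughly $\pi a^2|\ln\v|$ per unit $d_{a^2}$-length; since the weight in $F_\v$ is $U_\v^2\simeq a^2$ (by Proposition~\ref{P5.Fond1}.4, $U_\v$ equals $a$ up to exponentially small error away from $\p\o$), the cost along $\Gamma_i$ is $\pi\,\mathrm{long}_{a^2}(\Gamma_i)|\ln\v|$ to leading order. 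Summing over $i$ and using $\sum_i\mathrm{long}_{a^2}(\Gamma_i)=L(g,d_{a^2})$ yields the bound $\pi L(g,d_{a^2})|\ln\v|+o(|\ln\v|)$. This construction adapts the filament-building techniques of \cite{Sandier1} and \cite{BBM1}; the only new point is accounting for the variable weight $a^2$ and the bending of filaments at $\p\o$ as described in Proposition~\ref{P5.ClassificationGeodesiquespecialcase}. This is carried out in Section~\ref{S5.thetwoUpperBounds}.

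\emph{Lower bound.} This is the substantive direction and, I expect, the main obstacle. Following Sandier's slicing argument \cite{Sandier1}, one tests $F_\v(v)$ against a \emph{structure function} $\Phi$ adapted to the singularities of $g$ and to the metric $a^2$. The key is a function $\Phi:\overline{\O}\to\R^3$ built from the $d_{a^2}$-distance functions $\psi_M=d_{a^2}(\cdot,M)$, whose gradient satisfies $|\n\psi_M|=a^2$ by Proposition~\ref{P5.ClassificationGeodesiquespecialcase}.5. One slices $\O$ by level sets of a dual potential $\varphi$ nearly optimal in the Kantorovich--Rubinstein formula \eqref{5.DeflongMin1}, uses the coarea formula, and on each slice applies a $2D$ Ginzburg-Landau lower bound to the weighted energy density, picking up the topological degree of $v$ across the slice. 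The weight $U_\v^2$ contributes the factor $a^2$, converting the Euclidean slicing estimate into one governed by $d_{a^2}$; the net lower bound is $\frac{1}{2}T_g(\varphi)|\ln\v|$, and optimizing over admissible $\varphi$ with $|\varphi|_{d_{a^2}}\le1$ gives $\pi L(g,d_{a^2})|\ln\v|$ by \eqref{5.DeflongMin1}. The hard part is constructing the structure function in the presence of the discontinuity $\p\o$: the oscillation of $U_\v$ near $\p\o$ and the non-smoothness of $d_{a^2}$ at the interface obstruct a direct transfer of Sandier's argument. Establishing the requisite structure functions under suitable hypotheses on $\O,\o,g$ is exactly the content of Section~\ref{S5.StructureFunction} (Propositions~\ref{C5.Ustruc}, \ref{C5.StructureFunctionCompact}, \ref{C5.StructureFunctionCompactBisBis}, \ref{P5.StructFonctSymCas}), and I would rely on those to close the lower bound.
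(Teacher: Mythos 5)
Your proposal follows essentially the same route as the paper: the upper bound via vortex-filament test maps along a geodesic link adapted from \cite{Sandier1} and \cite{BBM1} (Sections \ref{S5.UpperBoundTheCaseOfSandier}--\ref{S5.UpperBoundTheCaseBBM}), the lower bound via the coarea/slicing argument built on the structure functions of Section \ref{S5.StructureFunction} together with the hypersurface estimate of Proposition \ref{P5.lemmedeSandier}, and the density argument of Section \ref{S5.DensityArgument} to pass from $g\in\mathcal{H}$ to general $g\in H^{1/2}(\p\O,\S^1)$. The only cosmetic slips are that the structure function is scalar-valued (not $\R^3$-valued) and that the paper slices by the level sets of the smoothed extension $\xi_\eta$ of the near-optimal dual potential itself, whose property $|\n\xi_\eta|\leq\min(a^2,U_\v^2+\v^4)$ is exactly what lets the weight $U_\v^2$ be absorbed by the coarea factor.
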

\begin{thm}\label{T5.Main2}Let $g\in \mathcal{H}$ be s.t. $(\Pos\cup \Neg,d_{a^2})$ admits a unique geodesic link which is denoted $\cup_i{\Gamma_i}$.

Let $v_\v$ be a minimizer of $F_\v$ in $H^1_g$. Then the normalized energy density% we have
\[
\mu_\v=\frac{\displaystyle\frac{U^2_\v}{2}|\n v_\v|^2+\frac{U^4_\v}{4\v^2}(1-|v_\v|^2)^2}{|\ln\v|}\,\mathscr{H}^3\text{ weakly converges in  $\O$ in the sense of the measures to }\pi a^2\mathscr{H}^1_{|\cup_i{\Gamma_i}} .
\]Here $\mathscr{H}^3$ is the $3$-dimensional Hausdorff measure and $\mathscr{H}^1_{|\cup_i{\Gamma_i}}$ is the one dimensional Hausdorff measure on ${\cup_i{\Gamma_i}}$.
%(On a noté $\mathscr{H}^n$ la mesure de Hausdorff $n$-dimensionnelle)

In other words
\[
\forall \phi\in C^0(\O,\R)\cap L^\infty(\O,\R)\text{ we have } \int_{{\O}}{\phi\di\mu_\v}\to\pi\int_{\cup_i\Gamma_i}\phi a^2\di\mathscr{H}^1.
\]
\end{thm}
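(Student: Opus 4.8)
The plan is to run the classical ``sharp upper bound $+$ sharp lower bound'' scheme, forcing the two bounds to match on the geodesic link. Write $\nu:=\pi a^2\mathscr{H}^1_{|\cup_i{\Gamma_i}}$ for the candidate limit. Since the minimal connection (hence the geodesic link) is assumed unique, Remark \ref{R5.GeodLink} gives $\Gamma_i\cap\Gamma_j=\emptyset$ for $i\neq j$, so $\mathscr{H}^1_{|\cup_i{\Gamma_i}}=\sum_i\mathscr{H}^1_{|\Gamma_i}$ and $\nu(\O)=\pi\sum_i{\rm long}_{a^2}(\Gamma_i)=\pi L(\cl,d_{a^2})=\pi L(g,d_{a^2})$, the last equalities by the definition of a geodesic link and Proposition \ref{P5.BBM1AuxProp}.5. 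I would first record compactness: by Theorem \ref{T5.Main1}, $\mu_\v(\O)=F_\v(v_\v)/|\ln\v|\to\pi L(g,d_{a^2})=\nu(\O)$, so the $\mu_\v$ have uniformly bounded mass and, along a subsequence, $\mu_\v\faible\mu$ for a finite nonnegative Radon measure $\mu$ on $\overline\O$ with $\mu(\O)\le\nu(\O)$. The theorem then reduces to the single inequality $\mu\ge\nu$ as measures: combined with $\mu(\O)\le\nu(\O)$ it gives $(\mu-\nu)(\O)\le0$ with $\mu-\nu\ge0$, hence $\mu=\nu$, and uniqueness of the geodesic link makes this limit subsequence-independent, yielding convergence of the whole family.

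The bound $\mu(\O)\le\nu(\O)$ is exactly the upper half of Theorem \ref{T5.Main1}, built in Section \ref{S5.thetwoUpperBounds}: by minimality of $v_\v$ one compares against an explicit competitor that concentrates its $U_\v$-weighted energy in thin tubes around $\cup_i{\Gamma_i}$, carrying a planar degree $\pm1$ vortex profile on each normal slice. The weight $a^2$ in the tube cost arises from $U_\v^2\to a^2$ (Proposition \ref{P5.Fond1} and estimate \eqref{5.Uepsprochea}) together with the conformal rescaling of the planar vortex, so that $\limsup_\v\mu_\v(\O)\le\pi\sum_i{\rm long}_{a^2}(\Gamma_i)=\nu(\O)$.

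The inequality $\mu\ge\nu$ is the heart of the matter, and I would prove the stronger statement that for every nonnegative $\phi\in C^0(\O)$ one has $\liminf_\v\int_\O\phi\di\mu_\v\ge\pi\int_{\cup_i{\Gamma_i}}\phi\,a^2\di\mathscr{H}^1$, via the structure-function/slicing argument of Section \ref{S5.LowerBoundSandier} adapted to the metric $a^2$. The key tool is a structure function $\Phi$ adapted to $\cl$ (Propositions \ref{C5.Ustruc}, \ref{C5.StructureFunctionCompact}, \ref{C5.StructureFunctionCompactBisBis}): a Lipschitz function on $\overline\O$ calibrating the link, with $|\n\Phi|\le a^2$ everywhere, saturation $|\n\Phi|=a^2$ precisely along $\cup_i{\Gamma_i}$, and boundary trace a Kantorovich potential realizing $L(g,d_{a^2})$. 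Foliating by the level sets $\Sigma_t=\{\Phi=t\}$, applying on $\mathscr{H}^2$-a.e.\ slice the two-dimensional Ginzburg--Landau lower bound $\pi|\deg(v_\v,\cdot)||\ln\v|$ for the energy of $v_\v|_{\Sigma_t}$ around its vortices (whose signed number is pinned to $T_g$ through Proposition \ref{P5.BBM1AuxProp}.4), and reassembling by the coarea formula weighted by $|\n\Phi|=a^2$, one recovers the global lower bound with the correct constant; testing with a $\phi$ supported near a portion of a single $\Gamma_i$ (legitimate because $\Phi$ saturates there) produces the $\mathscr{H}^1$-density $\pi a^2$ along each geodesic.

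Assembling $\mu\ge\nu$ with $\mu(\O)\le\nu(\O)$ gives $\mu=\nu=\pi a^2\mathscr{H}^1_{|\cup_i{\Gamma_i}}$, and convergence of the total masses upgrades the weak-$*$ convergence against nonnegative continuous functions to convergence against all $\phi\in C^0(\O)\cap L^\infty(\O)$. I expect the lower bound of the previous paragraph to be the main obstacle, for two compounding reasons: first, one must keep the slicing estimate \emph{simultaneously} sharp in its constant $\pi a^2$ and correctly localized, which forces a careful control of the errors from replacing $U_\v^2$ by $a^2$ and from the interface $\p\o$, where $\Phi$ can have a gradient jump and geodesics refract (cf.\ Proposition \ref{P5.ClassificationGeodesiquespecialcase}); second, one must rule out vorticity escaping onto a competing connection, and this is exactly where uniqueness of the minimal connection and of the geodesic link is indispensable, since otherwise the deficit between the global lower bound and the localized contributions along $\cup_i{\Gamma_i}$ could not be closed.
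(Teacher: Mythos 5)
There is a genuine gap, and it sits exactly where you predicted: the inequality $\mu\ge\nu$ cannot be obtained the way you propose. The coarea/slicing machinery of Section \ref{S5.LowerBoundSandier} produces a \emph{global} lower bound: integrating the two-dimensional degree estimate (Proposition \ref{P5.lemmedeSandier}) over the level sets of a structure function controls $\int_{\R\setminus F}|d(n,t)|\,{\rm d}t$, hence $F_\v(v_\v,\O)$ as a whole, but it says nothing about \emph{where} on a level surface the energy sits. To extract the density $\pi a^2$ along a prescribed portion of a single $\Gamma_i$ by "testing with $\phi$ supported near that portion", you would need a lower bound on the degree of $v_\v$ restricted to the piece of each slice $\Sigma_t$ lying inside your tube — and nothing pins the vorticity inside that tube a priori; it could pass elsewhere while respecting every global constraint. (This is precisely the kind of localization that an $\eta$-ellipticity statement would provide, and the introduction explains why such a result is unavailable here.) Moreover, the structure functions actually constructed (Propositions \ref{C5.Ustruc}, \ref{C5.StructureFunctionCompact}, \ref{C5.StructureFunctionCompactBisBis}) satisfy only $|\n\xi|\leq\min(a^2,U_\v^2+\v^4)$ and $\sum_i\{\xi(p_i)-\xi(n_i)\}\geq L(\cl,d_{a^2})-\eta$; the "saturation $|\n\Phi|=a^2$ precisely along $\cup_i\Gamma_i$" you invoke is not established, and even granted, a calibration saturating along the link does not by itself force degree through the tube.

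The paper closes the argument by proving the \emph{opposite} local inequality, $\mu\le\nu$, turning lower bounds on complements of balls into upper bounds on local energy. If $x_0$ is off the link, Proposition \ref{C5.StructureFunctionCompact} gives a structure function constant on $K=\overline{B(x_0,r_{x_0})}$ with almost no loss in $\sum_i\{\xi(p_i)-\xi(n_i)\}$ (since by Proposition \ref{P5.PropertiesdK} the compact-avoiding distance satisfies $L(\cl,d^K_{a^2})=L(\cl,d_{a^2})$); running the slicing on $\O\setminus K$ yields $F_\v(v_\v,\O\setminus K)\geq F_\v(v_\v,\O)-o(|\ln\v|)$, hence $F_\v(v_\v,K)=o(|\ln\v|)$ and ${\rm supp}\,\mu\subset\cup_i\Gamma_i$. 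If $x_0$ is on the link, Proposition \ref{C5.StructureFunctionCompactBisBis} together with the exact deficit \eqref{5.Someequaliiesforgeodesiccompactlinkbisrepetita} gives \eqref{5.BorneLimInfBisCompactAlelkqsjdf}, which combined with the upper bound \eqref{5.UpperBoundGeneralCase} yields \eqref{mpfkjfjfjhfngngng}, i.e.\ $\mu(K)\leq\pi a^2\H^1_{|\cup\Gamma_i}(K)$; this is where uniqueness of the geodesic link enters (via Remark \ref{R5.NouvelleRemark}.2), not where you placed it. Finally, the mass identity $\mu(\O)=\nu(\O)$ needed to convert $\mu\le\nu$ into $\mu=\nu$ uses \emph{both} halves of Theorem \ref{T5.Main1} — your scheme invokes only the upper bound $\mu(\O)\le\nu(\O)$, which would suffice for your direction $\mu\ge\nu$ but not for the paper's. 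So while your compactness step, your use of disjointness of the $\Gamma_i$, and your final bookkeeping are fine, the central inequality must be reversed and proved through the constant-on-a-compact structure functions rather than through a direct localized lower bound.
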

Note that this result gives a (uniform) energy concentration property of the minimizers along the geodesic link. Namely,  for all compact $K$ s.t. $K\cap\cup_i\Gamma_i=\emptyset$, we have $F_\v(v_\v,K)=o(|\ln\v|)$.

In order to obtain a more precise statement we assume that we are in a {\bf very} symmetrical case: $\O=B(0,1)$ and $\o=B(0,r_0)$, $r_0\in(0,1)$, $g\in\mathcal H$ is s.t. $\cl=\{p,n\}$ with $p=-n$. Under these hypotheses we have the following theorem.
\begin{thm}\label{T5.MainSymCase}
The following estimate holds
\[
\inf_{H^1_g}{F_\v}=\pi d_{a^2}(p,n)|\ln\v|+\mathcal{O}(1).
\]
Moreover, for all $\eta>0$, there is $C_\eta>0$ s.t. denoting $V_\eta=\{x\in\O\,|\,\dist(x,[p,n])\geq\eta\}$ and $v_\v$ a minimizer of  $F_\v$ in $H^1_g$, we have
\[
F_\v(v_\v,V_\eta)\leq C_\eta.
\]
\end{thm}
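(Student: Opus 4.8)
**The plan is to establish Theorem \ref{T5.MainSymCase} by proving matching upper and lower bounds, each with an $\mathcal{O}(1)$ error term, exploiting heavily the radial/reflection symmetry of the configuration $\O=B(0,1)$, $\o=B(0,r_0)$, $\cl=\{p,-p\}$.**

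For the \emph{upper bound}, I would construct an explicit competitor $v_\v\in H^1_g$ concentrating along the segment $[p,n]=[p,-p]$, which by Proposition \ref{P5.ClassificationGeodesiquespecialcase}(4a–4d) is itself a $d_{a^2}$-geodesic through the origin (a single diameter passing through $\o$, so $l=3$ with $S_2$ a chord of $\o$). The standard Ginzburg-Landau building block is to take $v_\v$ to look like a degree-$\pm1$ vortex in each $2$D slice transverse to the segment, with a core of size $\v/a$. Because $U_\v\simeq a$ away from $\p\o$ by \eqref{5.Uepsprochea}, the local energy per unit length of such a tube in the region $\{a=1\}$ is $\pi|\ln\v|+\mathcal{O}(1)$ and in the region $\{a=b\}$ is $\pi b^2|\ln\v|+\mathcal{O}(1)$ (the weight $U_\v^2\simeq a^2$ rescales the vortex). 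Integrating along $[p,n]$ gives exactly $\pi\,\mathrm{long}_{a^2}([p,n])|\ln\v|=\pi d_{a^2}(p,n)|\ln\v|$ as the leading term. The delicate point is to keep the error at $\mathcal{O}(1)$ rather than $o(|\ln\v|)$: this forces me to control the crossing of $\p\o$ (where $a$ jumps and $U_\v$ transitions on scale $\v$) with bounded energy cost, which I would do using the exponential estimate \eqref{5.Uepsprochea} and a careful matching of the profile across the interface, and to handle the endpoints $p,n\in\p\O$ using the prepared structure of $g\in\H$ near its singularities.

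For the \emph{lower bound}, the main tool is the structure-function/Sandier method advertised in the introduction (Propositions \ref{C5.Ustruc}, \ref{C5.StructureFunctionCompact}, \ref{P5.StructFonctSymCas}). Here I would invoke the structure function $\psi=d_{a^2}(\cdot,p)$, which by Proposition \ref{P5.ClassificationGeodesiquespecialcase}(5) lies in $W^{1,\infty}_{\rm loc}$ with $|\n\psi|=a^2$. The idea is to slice $\O$ by level sets of $\psi$ (or by an appropriate family of surfaces adapted to the symmetry) and apply a coarea/Jacobian lower bound: on each slice the degree of $v_\v$ is $1$, so the $2$D Ginzburg-Landau energy in the weighted metric $U_\v^2h$ is at least $\pi|\ln\v|$ times the appropriate local weight, and integrating against $|\n\psi|=a^2$ recovers $\pi d_{a^2}(p,n)|\ln\v|$. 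The crucial improvement to $\mathcal{O}(1)$, as opposed to $o(|\ln\v|)$, is exactly what the strong symmetry buys: with $p=-n$ and radial $\O,\o$, the optimal geodesic is a diameter and the whole configuration has a reflection symmetry, so the clearing-out/Jacobian estimates can be made effective with an explicit bounded remainder, rather than through a compactness argument that only yields $o(|\ln\v|)$. The last claim, $F_\v(v_\v,V_\eta)\leq C_\eta$, then follows by combining the sharp global upper bound with a localized lower bound on $F_\v(v_\v,\O\setminus V_\eta)$: since the total energy is $\pi d_{a^2}(p,n)|\ln\v|+\mathcal{O}(1)$ and the tube around $[p,n]$ already accounts for $\pi d_{a^2}(p,n)|\ln\v|-\mathcal{O}(1)$, the energy remaining in $V_\eta$ is bounded.

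**The main obstacle** I expect is achieving the $\mathcal{O}(1)$ error rather than $o(|\ln\v|)$ in the lower bound, precisely at the interface $\p\o$. The oscillating, non-uniformly-Lipschitz behavior of $U_\v$ near $\p\o$ — the same obstruction that the introduction flags as blocking $\eta$-ellipticity — makes the slicing argument technically heavy: one must verify that the structure function $\psi=d_{a^2}(\cdot,p)$ interacts well with $U_\v^2$ across the jump and that no uncontrolled $|\ln\v|$-order energy can hide in the thin transition layer $\{\dist(\cdot,\p\o)\lesssim\v\}$. The symmetry hypotheses are what make this tractable with a bounded constant, so I would lean on Proposition \ref{P5.StructFonctSymCas} to supply a structure function compatible with both the reflection symmetry and the interface, reducing the transverse problem on each slice to a one-vortex estimate with explicit constants.
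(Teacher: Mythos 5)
Your upper bound and your derivation of the localized estimate $F_\v(v_\v,V_\eta)\leq C_\eta$ are essentially the paper's route: Sandier's test function along the segment $[p,n]$, with the crossing of $\p\o$ costing only $\mathcal{O}(1)$ thanks to \eqref{5.Uepsprochea} and to the fact that a line meets the $\sqrt\v$-neighborhood of $\p\o$ in length $\mathcal{O}(\sqrt\v)$, yields \eqref{5.UpperBoundGeodesicareLines}; then, for $x\notin[p,n]$, a lower bound on $F_\v(v_\v,\O\setminus K)$ with $K=\overline{B(x,r_x)}$ is subtracted from the global upper bound, and $V_\eta$ is covered by finitely many such balls. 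That part is sound.

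The genuine gap is in the lower bound. The structure function you primarily propose to slice with, $\psi=d_{a^2}(\cdot,p)$, cannot deliver the $\mathcal{O}(1)$ error, for two concrete reasons. First, the coarea argument requires $|\n\xi|\leq U_\v^2$ pointwise in order to drop the weights, whereas $|\n\psi|=a^2\not\leq U_\v^2$: just outside $\o$ one has $a=1$ while $U_\v<1$, with $a^2-U_\v^2$ of order $1$ in a layer of width $\sim\v$ around $\p\o$; since the minimizer concentrates $\mathcal{O}(|\ln\v|)$ energy along $[p,n]$, which crosses $\p\o$, the resulting error term is not a priori $\mathcal{O}(1)$. (In the general case the paper restores the gradient constraint only up to $\min(a^2,U_\v^2+\v^4)$ by inflating $\o$ to $\o_\delta$ and mollifying, Proposition \ref{C5.Ustruc} -- and that concession is precisely what downgrades the error to $o(|\ln\v|)$.) Second, the constant in Proposition \ref{P5.lemmedeSandier} depends on the bound on the second fundamental form of the slices, and the level sets of $d_{a^2}(\cdot,p)$ near the endpoints are small spheres of curvature $\sim 1/t$, so no uniform constant is available there. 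You do eventually invoke Proposition \ref{P5.StructFonctSymCas}, but you mischaracterize it: it is not a fixed function adapted to a reflection symmetry, but an $\v$-dependent ``dumbbell'' function (Lemma \ref{L5.Lemmedelaltere}) built from $U_\v$ itself. Its level sets are spheres centered at the \emph{external} points $2p$ and $2n$ of radius at least $1$ (hence uniformly bounded curvature, allowing ``$\eta=0$''), it is constant on a neighborhood of any prescribed $M\notin[p,n]$, it satisfies $\xi_\v(p)-\xi_\v(n)=d_{U_\v^2}(p,n)$, and -- the decisive mechanism -- $|\n\xi_\v|\leq U_\v^2$ holds \emph{exactly}, because $U_\v$ is radial and nondecreasing (Proposition \ref{P5.DomaineSymetrique}), so on each sphere $\p B(2p,r)$ the gradient of $\xi_\v$ equals $\min_{\p B(2p,r)}U_\v^2$, attained at the point closest to the origin. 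It is this radial monotonicity of $U_\v$, not reflection symmetry or an ``effective clearing-out estimate,'' that produces the $\mathcal{O}(1)$ remainder; your proposal neither identifies this mechanism nor explains how your slicing would satisfy the gradient constraint across $\p\o$.
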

\section{Outline of the proofs}\label{S5.OutlineProofs}
The proofs of the above theorems strongly rely on the techniques developed in \cite{Sandier1}. The proofs of Theorems \ref{T5.Main1}, \ref{T5.Main2} and \ref{T5.MainSymCase} consist essentially into two parts devoted to obtaining respectively lower and  upper bounds.

The upper bound is obtained by the construction of a test function. The test function was obtained by Sandier in \cite{Sandier1} in the situation where $\O$ is a strictly convex domain and there is a geodesic link in $(\cl,d_{a^2})$ which is a union of line segments. In this special case, one may obtain (see Section \ref{S5.UpperBoundTheCaseOfSandier}): 
\begin{equation}\label{5.UpperBoundGeodesicareLines}
\inf_{ H^1_g}{F_{\v}}\leq\pi L(g,d_{a^2})|\ln\v|+\mathcal{O}(1).
\end{equation}

 For the general case, when the geodesic links are not unions of line segments, in \cite{BBM1}, Bourgain, Brezis and Mironescu adapted the construction of Sandier. In our case this leads to the bound: 
\begin{equation}\label{5.UpperBoundGeneralCase}
\inf_{ H^1_g}{F_{\v}}\leq\pi L(g,d_{a^2})|\ln\v|+o(|\ln\v|).
\end{equation}
(See Section \ref{S5.UpperBoundTheCaseBBM}.)

The lower bounds are obtained as in \cite{Sandier1}. The key ingredient is the construction of a "structure function" $\xi:\R^3\to\R$ (see Section \ref{S5.StructureFunction} for a precise definition). Due to the fact that for $M\in\R^3$, $x\mapsto \psi_M(x)=d_{a^2}(x,M)$ is not $C^1$ (its gradient is not continuous on $\p\o$ since $|\n \psi_M|=a^2$ in $\R^3\setminus\p\o$), we cannot obtain $\xi$ with exactly the same properties as in \cite{Sandier1} (see Proposition \ref{C5.Ustruc}). The consequence of this lack of smoothness of the distance function implies that our best lower bound is
\begin{equation}\label{5.LowerBoundGeneralCase}
\inf_{ H^1_g}{F_{\v}}\geq\pi L(g,d_{a^2})|\ln\v|-o(|\ln\v|).
\end{equation}

However, under strong symmetry hypotheses, namely, $\O=B(0,1),\o=B(0,r_0)$ and $\cl=\{p,n=-p\}$, the structure function $\xi$ enjoys additional properties (see Proposition \ref{P5.StructFonctSymCas}). In this symmetric case, one may obtain the sharper bound
\begin{equation}\label{5.LowerBoundSymmetricCase}
\inf_{ H^1_g}{F_{\v}}\geq\pi L(g,d_{a^2})|\ln\v|-\mathcal{O}(1).
\end{equation}

The estimate on $\inf_{H^1_g}F_\v$ in Theorem \ref{T5.Main1} (resp. Theorem \ref{T5.MainSymCase}) is a direct consequence of \eqref{5.UpperBoundGeneralCase}, \eqref{5.LowerBoundGeneralCase} (resp. \eqref{5.UpperBoundGeodesicareLines} and \eqref{5.LowerBoundSymmetricCase}) and of the  density of $\mathcal{H}$ in $H^{1/2}(\p\O,\S^1)$ (see Section \ref{S5.DensityArgument}) . 

Theorem \ref{T5.Main2} is proved along the main lines in \cite{Sandier1}.% adapting the structure function $\xi$. Since the argument is exactly the same than in \cite{Sandier1} we just construct a suitable structure function (see Proposition \ref{P5.1compact}). 

Roughly speaking, under the hypotheses of  Theorem \ref{T5.Main2}, for all $x\in\O$, there is $\rho_x>0$ s.t. for $K=\overline{B(x,\rho_x)}$, one may consider a structure function $\xi$ adapted to $\cl$ which is constant in $K$ (see Section \ref{S5.StrFonCCompAAAA}). Arguing as in \cite{Sandier1}, if $K$ does not intersect the geodesic link, then we obtain that in $K$, a minimizer of $F_\v$ has its energy of order $o(|\ln\v|)$ (see \eqref{5.ConcentrationEnergySmallComp}). Thus $\mu$, the weak limit of $\mu_\v=\{{\frac{U^2_\v}{2}|\n v_\v|^2+\frac{U^4_\v}{4\v^2}(1-|v_\v|^2)^2}\}\mathscr{H}^3/{|\ln\v|}$ (which exists up to subsequence), is supported in $\overline{\O}\setminus K$. Therefore, one may prove that the support of $\mu$ is included in the geodesic link.

Otherwise, if $x$ is on the geodesic link, as explained in \cite{Sandier1}, then we obtain for $v_\v$ a minimizer and $\rho$ sufficiently small that (with $K=\overline{B(x,\rho)}$)
\[
\limsup\frac{F_\v(v_\v,K)}{|\ln\v|}\leq\pi {\rm long}_{a^2}(K\cap\cup_i{\Gamma_i}).
\]
 Theorem \ref{T5.Main2} is obtained by comparing $\mu$ to $\pi a^2\mathscr{H}^1_{|\cup_i{\Gamma_i}}$.% one may obtain the result.
\section{First step in the proof of Theorem  \ref{T5.Main1}: an upper bound for ${\rm inf}_{H^1_g}F_\v$, $g\in\mathcal{H}$}\label{S5.thetwoUpperBounds}
\subsection{The case $\cl$ admits a geodesic link in $(\R^3,d_{a^2})$ which is a union of lines}\label{S5.UpperBoundTheCaseOfSandier}

 Assume that
 \begin{enumerate}[$\bullet$]
 \item $\O$ is strictly convex;
 \item there is $\Gamma=\cup\Gamma_i$, a geodesic link of $\cl$ in $(\R^3,d_{a^2})$, s.t. $\Gamma_i$ is a line segment for all $i$. 
 \end{enumerate}
 One may assume that the minimal connection associated to $\Gamma$ is  the identity and that $\Gamma_i$ is a geodesic curve between $p_i$ and $n_i$.
 
It is easy to see that
 \begin{enumerate}[$\bullet$]
 \item by strict convexity of $\O$ we have $\Gamma_i\subset\O$;
 \item from Remark \ref{R5.GeodLink}.2 we have for $i\neq j$ that $\Gamma_i\cap\Gamma_j=\emptyset$.
 \end{enumerate}
In this situation, we may mimic the construction of the test function made in Section 1 of  \cite{Sandier1}.

The test function is a fixed (independent of $\v$) $\S^1$-valued function outside $\mathcal{V}_\eta$, an $\eta$-tubular neighborhood of $\Gamma$ ($\eta$ small and independent of $\v$). 

Inside each tubular neighborhood $\mathcal{V}_{\eta,i}$ of $\Gamma_i$, the test function takes (essentially) the form (in an orthonormal basis $\{p_i,({\bf e}_x,{\bf e}_y,{\bf e}_z)\}$ s.t. $n_i=(0,0,|p_i-n_i|)$) 
\begin{equation}\label{5.ThetestFunctionOfSandier}
v_\v(x,y,z)=\begin{cases}
\displaystyle\frac{(x,y)}{|(x,y)|}\text{ if }\eta<z<|p_i-n_i|-\eta\text{ and $\v<|(x,y)|<\eta$}
\\\displaystyle\frac{(x,y)}{\v}\text{ if }\eta<z<|p_i-n_i|-\eta\text{ and $|(x,y)|<\v$}
\\\displaystyle\frac{1}{2}\int_{\tilde{\mathcal{V}}_{\eta,i}}\left\{|\n v_\v|^2+\frac{1}{2\v^2}(1-|v_\v|^2)^2\right\}\leq2\pi\eta|\ln\v|+C(g,\eta)\\\text{with }\tilde{\mathcal{V}}_{\eta,i}=\mathcal{V}_{\eta,i}\cap\{0<z<\eta\text{ or }|p_i-n_i|-\eta< z<|p_i-n_i|\}
\end{cases}.
\end{equation}
%Here $\alpha_i\in\S^1$ is a fixed constant 
(see \cite{Sandier1} for more details).

From the strict convexity of $\o$, for all line $D\subset \R^3$, we have
\[
{\rm long}\left(D\cap\{x\in\R^3\,|\,\dist(x,\p\o)\leq\sqrt\v\}\right)\leq C\sqrt\v\text{ with $C>0$ independent of $\v$}. 
\]
Thus one may obtain from Proposition \ref{P5.Fond1}.4. that \eqref{5.UpperBoundGeodesicareLines} holds.
\subsection{The general case}\label{S5.UpperBoundTheCaseBBM}
One may adapt  the above construction to a more general situation: the geodesic links are not unions of line segments and $\O$ is convex (not strictly convex). Without loss of generality we may assume that $\sigma={\rm Id}$ is a minimal connection.% an adaptation of the construction of Sandier may be improved. This is the case for example when
%\begin{enumerate}[$\bullet$]
%\item the pinning term plays a role (the geodesic link is attracted by the inclusion $\o$),
%\item there is no pinning term and the domain is not convex considering the (usual) geodesic distance in $\overline{\O}$ for the definition of the geodesic link (it is one of the purposes of \cite{BBM1}).
%\end{enumerate}

For the standard Ginzburg-Landau energy, this has been done in \cite{BBM1}; there, $\O$ is not supposed convex. Roughly speaking, the argument there consists in replacing in Sandier's proof, line segments by curves.%it is the construction (given by \eqref{5.ThetestFunctionOfSandier}) of Sandier replacing the segments by curves.

Their construction begins with 
\begin{itemize}
\item[$\bullet$]the modification of $\O$ (flattening $\p\O$ close to the singularities) 
\item[$\bullet$]for $\eta>0$, we construct an approximate (smooth) geodesic link $\Gamma_\eta=\cup_i\Gamma_i^\eta\subset\O$ s.t. 
\begin{itemize}\item ${\rm long}_{a^2}(\Gamma_\eta)\leq {\rm long}_{a^2}(\Gamma)+\eta$,
\item $\Gamma$ is a geodesic link and $\Gamma_i^\eta$ is an approximate geodesic curve between $p_i$ and $n_i$,
\item $\Gamma_i^\eta\cap\Gamma_j^\eta=\emptyset$ for $i\neq j$ and $\Gamma_i^\eta$ is orthogonal to $\p\O$ at $p_i$ and $n_i$.
\end{itemize}
\end{itemize}
In order to be applicable to our situation, this construction requires the additional property about its intersection with a small neighborhood of $\p\o$:
\[\mathscr{H}^1(\Gamma_\eta\cap\{\dist(x,\p\o)<\sqrt\v\})=\mathcal{O}(\sqrt\v).
\]
We can clearly find $\Gamma_\eta$ satisfying this property.

By adapting the construction of $v_\v$ in \eqref{5.ThetestFunctionOfSandier}, one may construct a test function $v_\v^\eta$ having $\Gamma_\eta$ as set of zeros and satisfying, for each $\eta>0$, the estimate 
%In view of the form of the geodesic link $\Gamma$, far from $\p\O$, the only part of $\Gamma$ that need to be smoothed (in order to use the Sandier's construction) is its intersection with the interface $\p\o$. It is obvious that one may obtain $\Gamma_\eta$ with the above additional property.

%From the same argument as above, one may construct a test function $v^{\eta}_\v$ in the spirit of Sandier. The curves $\Gamma_\eta$ corresponds to its set of zeros. Combining the uniform concentration of the energy of $v^{\eta}_\v$ along $\Gamma_\eta$, the additional property close to the interface $\p\o$ and Proposition \ref{P5.Fond1} Assertion {\it 4.}, we obtain that for all $\eta>0$
\[
\frac{\inf_{v\in H^1_g}{F_{\v}(v)}}{|\ln\v|}\leq \frac{F_\v(v^{\eta}_\v)}{|\ln\v|}+o_\v(1)=\pi{\rm long}_{a^2}(\Gamma_\eta)+o_\v(1)\leq \pi{\rm long}_{a^2}(\Gamma)+\eta+o_\v(1).
\]
In order to obtain this estimate we rely on the formula of $v_\v^\eta$, Proposition \ref{P5.Fond1}.4. and the assumption $\mathscr{H}^1(\Gamma_\eta\cap\{\dist(x,\p\o)<\sqrt\v\})=\mathcal{O}(\sqrt\v)$.

Consequently we deduce that \eqref{5.UpperBoundGeneralCase} holds.
%Clearly this assumption does not disturb the other properties of $\Gamma_{\rm approx}$. Thus this test function gives the upper bound \eqref{5.UpperBoundGeneralCase} (using Proposition \ref{P5.Fond1} Assertion {\it 4.}).

\section{Structure functions}\label{S5.StructureFunction}
For $g\in\mathcal{H}$, we construct suitable structure functions adapted to the singularities of $g$. 

Very roughly speaking, a structure function $\xi\in {\rm Lip}(\R^3,\R)$ is a smooth map whose restriction on $\p\O$ almost maximizes \eqref{5.DeflongMin1}. More qualitative properties of $\xi$ will be described in Propositions \ref{C5.Ustruc}, \ref{C5.StructureFunctionCompact}, \ref{C5.StructureFunctionCompactBisBis} and  \ref{P5.StructFonctSymCas}.

Structure functions are used to compute a lower bound of the energy. We may sketch the proof of a (sharp) lower bound of $F_\v(v_\v)$:
\begin{itemize}
\item We integrate by layers (on hypersurfaces) using the Coarea formula; 
\item  The layers are the level sets of the structure function $\xi:\R^3\to\R$. Lower bounds on each hypersurfaces are obtained via a standard result on Ginzburg-Landau energy (Proposition \ref{P5.lemmedeSandier}). Because the use of Coarea formula adds an extra-weight $\frac{1}{|\n \xi|}$ on the layers, in order to get a lower bound we drop the weights $U_\v^2$ and $U_\v^4\geq b^2 U_\v^2$ by taking $\xi$ s.t. $\frac{U_\v^2}{|\n\xi|}\apprge 1$. Therefore, an important property of $\xi$ is that its gradient is controlled by $U_\v^2$ ($|\n\xi|\lesssim U_\v^2$).% and thus, we may have weights with $U_\v$ disappear.% on the surface integrals and we have to give Ginzburg-Landau.
\item In order to treat the integrals on $\{\xi=t\}$ we use a standard lower bound (Proposition \ref{P5.lemmedeSandier}) which need a control on the second fundamental form of $\{\xi=t\}$. This is an another property of the structure function.
\item  The "quantity" of layers is related with the length of a minimal connection $L(\cl,d_{a^2})$. In order to get a sharp lower bound we need that $\sum_i\{\xi(p_i)-\xi(n_i)\}\apprge L(\cl,d_{a^2})$; it is another important property of $\xi$.
\end{itemize}
We present below four structure functions corresponding to four different settings.

%Note that \eqref{5.ContinuityOfMinimalConnexion} is equivalent with the fact that $\Phi:[0,\delta_0]\to \R$, $t\mapsto L(\cl,d_{\alpha_t})$ is continuous. 

\subsection[Second step in the proof of Theorem \ref {T5.Main1}]{Second step in the proof of Theorem \ref {T5.Main1}:  a structure function}
This subsection is devoted to the presentation of a structure function adapted to $\cl$ and $a$. This function is given in the following.

%The proof of Proposition \ref{P5.1} is postponed in Appendix \ref{Proofwithoutcompact}.

%We are now in position to describe and to prove the existence of a structure function adapted with $\cl$ and $a$.
\begin{prop}\label{C5.Ustruc}
For all $\eta>0$, there is $C_{\eta}>0$, $E_{\eta}\subset\R$, $\xi_{\eta}\in C^\infty(\R^3,\R)$ and $\v_\eta>0$ s.t. for $0<\v<\v_\eta$, 
\begin{enumerate}[1.]
\item $|\n\xi_{\eta}|\leq \min(a^2,U_\v^2+\v^4)$ in $\R^3$,
\item $\sum_{i\in\N_k}\left\{\xi_{\eta}(p_i)-\xi_{\eta}(n_i)\right\}\geq L(\cl,d_{a^2})-\eta$,
\item $\mathscr{H}^1(E_{\eta})\leq\eta$ and for all $t\in\R\setminus E_{\eta}$, $\{\xi_{\eta}=t\}$ is a closed hypersurface whose second fundamental form is bounded by $C_{\eta}$.
\end{enumerate}
\end{prop}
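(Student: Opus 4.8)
The plan is to build $\xi_\eta$ by regularizing a suitable maximizer of the dual problem \eqref{5.DeflongMin1} and then rescaling it slightly so that its gradient drops below $U_\v^2+\v^4$ rather than merely $a^2$. First I would invoke Proposition \ref{P5.BBM1AuxProp}.5 together with \eqref{5.DefDegOuvert}: since $g\in\mathcal H$, the optimal test function in $L(\cl,d_{a^2})$ is (a representative of) $\psi:=d_{a^2}(\cdot,M)$-type distance functions. More precisely, letting $\sigma$ be a minimal connection of $(\cl,d_{a^2})$, one assembles a $1$-Lipschitz (w.r.t. $d_{a^2}$) function $\varphi_0$ on $\cl$ achieving $\sum_i\{\varphi_0(p_i)-\varphi_0(n_{\sigma(i)})\}=L(\cl,d_{a^2})$, and extends it to a function $\psi_0\in W^{1,\infty}_{\rm loc}(\R^3,\R)$ with $|\n\psi_0|\le a^2$ a.e.\ (using Proposition \ref{P5.ClassificationGeodesiquespecialcase}.5, which gives exactly $|\n\psi_M|=a^2$ for the basic distance functions, so that $\max/\min$-combinations of such functions, being $1$-Lipschitz in $d_{a^2}$, inherit $|\n\psi_0|\le a^2$). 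This $\psi_0$ is the raw candidate: it satisfies properties~1 and~2 in the $\eta\to0$ limit but is only Lipschitz, with its gradient jumping across $\p\o$.

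Next I would smooth $\psi_0$ by mollification at a scale $\delta=\delta(\eta)$: set $\xi_\eta^0=\psi_0*\rho_\delta$ with $\rho_\delta$ a standard mollifier. Convolution does not increase the Lipschitz constant, so $|\n\xi_\eta^0|\le a^2$ is preserved (here I use that the Lipschitz constant of $\psi_0$ in the $a^2$-weighted metric controls $|\n\xi_\eta^0|$ pointwise by $a^2$, via the convexity of $|\cdot|$ and the a.e.\ bound on $\n\psi_0$). Smoothing perturbs the boundary values by $O(\delta)$ in the sup norm, so choosing $\delta$ small makes property~2 hold with the $-\eta$ slack. For property~1 I still need to pass from $a^2$ to $\min(a^2,U_\v^2+\v^4)$: away from $\p\o$, estimate \eqref{5.Uepsprochea} gives $U_\v^2\to a^2$ exponentially fast, so $U_\v^2+\v^4\ge a^2$ outside a shrinking $\v$-neighborhood of $\p\o$, and there is nothing to do. Inside that neighborhood, where $U_\v$ dips below $a$, I would multiply $\xi_\eta^0$ by a constant factor $(1-\kappa_\v)$ slightly below $1$, or equivalently rescale the whole function, so that $|\n\xi_\eta|\le U_\v^2+\v^4$ throughout; the loss in property~2 is of order $\kappa_\v\to0$ and can be absorbed into $\eta$ for $\v<\v_\eta$ small.

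The main obstacle is property~3, the control of the second fundamental form of the level sets $\{\xi_\eta=t\}$ off an exceptional set $E_\eta$ of small $\mathscr H^1$-measure. After mollification $\xi_\eta$ is smooth, so each level set is a smooth hypersurface; the difficulty is that the curvature bound $C_\eta$ must be \emph{uniform} in $t$ outside $E_\eta$, and curvature can blow up near critical values of $\xi_\eta$ (where $\n\xi_\eta$ is small) and where level sets are noncompact or fail to close up. Here I would argue via the coarea formula and Sard-type reasoning: the set of $t$ for which $\{\xi_\eta=t\}$ either contains a near-critical point or has second fundamental form exceeding $C_\eta$ has measure controlled by $\int |\n^2\xi_\eta|/|\n\xi_\eta|$-type quantities, which are finite once $\delta$ is fixed; choosing $C_\eta$ large (depending on $\delta$, hence on $\eta$) forces this bad set $E_\eta$ to have $\mathscr H^1(E_\eta)\le\eta$. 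Closedness of the good level sets follows from the properness of $\xi_\eta$ (it grows linearly at infinity like $\psi_0$) together with strict convexity of $\o$, which guarantees the level sets meet $\p\o$ transversally and do not develop singular geometry there. This curvature-quantification step is precisely where the analysis in \cite{Sandier1} must be adapted, and it is the crux of the construction.
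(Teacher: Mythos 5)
There is a genuine gap at your very first smoothing step, and it propagates fatally. You claim that mollifying $\psi_0$ (which satisfies $|\n\psi_0|\leq a^2$ a.e.) preserves the pointwise bound $|\n(\psi_0\ast\rho_\delta)|\leq a^2$. Mollification preserves a \emph{constant} Lipschitz bound, not a bound by the discontinuous weight $a^2$: at a point $x\in\o$ with $\dist(x,\p\o)<\delta$, the average $(\n\psi_0)\ast\rho_\delta(x)$ picks up contributions from outside $\o$, where $|\n\psi_0|$ can be as large as $1$, so $|\n\xi_\eta^0(x)|$ can be close to $1$ while the required bound is $a^2(x)=b^2<1$. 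Your repair for the constraint $|\n\xi_\eta|\leq U_\v^2+\v^4$ fails quantitatively for a related reason: just outside $\o$, at distance $O(\v)$ from $\p\o$, the only available lower bound is $U_\v\geq b$ --- estimate \eqref{5.Uepsprochea} gives no improvement at that scale, since $U_\v$ transitions across a layer of width $\sim\v$ --- whereas your bound on $|\n\xi_\eta^0|$ there is only $a^2=1$. Forcing $|\n\xi_\eta|\leq U_\v^2+\v^4$ by a global factor $(1-\kappa_\v)$ therefore requires $\kappa_\v\geq 1-b^2-\v^4$, a loss of fixed size, so property~2 degrades to $\sum_i\{\xi_\eta(p_i)-\xi_\eta(n_i)\}\geq b^2 L(\cl,d_{a^2})-\eta$, not $\geq L(\cl,d_{a^2})-\eta$. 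The point is that the structure function must have gradient at most $b^2$ not only in $\o$ but on a full fixed neighborhood of $\overline\o$, and this must be arranged \emph{before} mollification, not patched afterwards.

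The paper's proof cures both defects with one device absent from your proposal: it replaces $a^2$ by $\alpha_\delta$, equal to $b^2$ on the \emph{enlarged} inclusion $\o_\delta=\o+B(0,\delta)$ and $1$ outside, and runs your Steps~1--2 (the dual function from Lemma 4.2 of \cite{BCL1}, extended by $\xi_1(x)=\max_i\{\xi_0(p_i)-d_{\alpha_{\delta'}}(x,p_i)\}$) in the metric $d_{\alpha_{\delta'}}$ with $\delta'>\delta$. Mollifying at scale $t<\delta'-\delta$ then honestly yields $|\n\xi_2|\leq(1-\beta)\alpha_\delta$, because for $x\in\o_\delta$ the ball $x+B(0,t)$ stays inside $\o_{\delta'}$, where $|\n\xi_1|\leq b^2$. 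With $|\n\xi_\eta|\leq b^2$ on a $\delta$-neighborhood of $\o$ that is \emph{independent of $\v$}, property~1 follows for free: there $|\n\xi_\eta|\leq b^2\leq U_\v^2$ since $U_\v\geq b$ pointwise (Proposition \ref{P5.Fond1}.1), while outside that neighborhood \eqref{5.Uepsprochea} gives $U_\v^2+\v^4\geq1$ as soon as $C\e^{-\gamma\delta/\v}<\v^4$, which is what defines $\v_\eta$. The price --- replacing $d_{a^2}$ by $d_{\alpha_\delta}$ in property~2 --- is controlled by the continuity estimate \eqref{5.ContinuityOfMinimalConnexion}, $L(\cl,d_{\alpha_\delta})=L(\cl,d_{a^2})+\mathcal{O}(\delta)$, whose proof (projection of geodesics onto $\overline{\o_\delta}$) is itself a nontrivial step you would need. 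Finally, your Sard/coarea treatment of property~3 is too loose as stated, since nothing rules out degenerate critical values accumulating; the paper instead approximates by a \emph{Morse} function equal to $|x|/2$ outside a large ball, so that the exceptional set $E_{\eta}$ is the image of small balls around finitely many nondegenerate critical points, with curvature off $E_\eta$ bounded by $\sup|D^2\xi_{\eta,\delta}|\,/\,\inf|\n\xi_{\eta,\delta}|$.
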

%For the convenience of the reader, the proof of Proposition \ref{C5.Ustruc} is postponed in  \ref{PreuvePremierProp}.
This proposition is proved in Section \ref{S5.ProofPremierresmlsqkdfgn}
\begin{remark}
In Proposition \ref{C5.Ustruc}, the structure function $\xi_\eta$ does not dependent on (small) $\v$. The structure functions given by Propositions \ref{C5.StructureFunctionCompact} and \ref{C5.StructureFunctionCompactBisBis} (below) are also independent of small $\v$. On the other hand, the structure function given by Proposition \ref{P5.StructFonctSymCas} (below) depends on $\v$.
\end{remark}
\subsection{The proof of Proposition \ref{C5.Ustruc}}\label{S5.ProofPremierresmlsqkdfgn}
\subsubsection{Some definitions}

Let $\delta_0=\displaystyle10^{-2}\min\{1;\dist(\p\o,\p\O)\}$. For  $0<\delta\leq\delta_0$, we define $\o_\delta:=\o+B(0,\delta)$, $\alpha_0=a^2$ and
\[
\begin{array}{cccc}\alpha_\delta:&\R^3&\to&\{1,b^2\}\\&x&\mapsto&\begin{cases}b^2&\text{if }x\in\o_\delta\\1&\text{otherwise}\end{cases}\end{array}.
\]

For $x,y\in\R^3$ and $0\leq\delta<\delta'\leq\delta_0$, we have 
\begin{equation}\label{5.EstiPourBougIncl}
d_{\alpha_{\delta'}}(x,y)\leq d_{\alpha_\delta}(x,y)\leq d_{\alpha_{\delta'}}(x,y)+\mathcal{O}(|\delta'-\delta|).
\end{equation} 
The first inequality is a direct consequence of $\alpha_{\delta'}\leq\alpha_{\delta}$. We prove the second inequality. Consider $x,y\in\R^3$ s.t. $d_{\alpha_{\delta'}}(x,y)< d_{\alpha_{\delta}}(x,y)$. We obtain that if $\Gamma$ is a  geodesic joining $x$ and $y$ in $(\R^3,d_{\alpha_{\delta'}})$, then we have $\Gamma\cap\p\o_{\delta'}\neq\emptyset$. 

Note that by Proposition \ref{P5.ClassificationGeodesiquespecialcase}, we have ${\rm Card}(\Gamma\cap\p\o_{\delta'})\in\{1,2\}$.

Assume that $\Gamma\cap\p\o_{\delta'}=\{x',y'\}$ with $d_{\alpha_{\delta'}}(x,x')<d_{\alpha_{\delta'}}(x,y')$. The situation where $\Gamma\cap\p\o_{\delta'}=\{z\}$ is similar.

Consider $x''=\Pi_{\overline{\o_{\delta}}}(x')$ and $y''=\Pi_{\overline{\o_{\delta}}}(y')$. Here $\Pi_{\overline{\o_{\delta}}}$ stands for the orthogonal projection on ${\overline{\o_{\delta}}}$. By the definition of $x''$ and $y''$ we have $\deucl(x',x'')=\deucl(y',y'')=\delta'-\delta$. By Proposition \ref{P5.ClassificationGeodesiquespecialcase}, we deduce that $d_{\alpha_{\delta'}}(x'',y'')=d_{\alpha_{\delta}}(x'',y'')$.

Since $x',y'\in\Gamma$, we have 
\begin{eqnarray*}
d_{\alpha_{\delta'}}(x,y)&=&d_{\alpha_{\delta'}}(x,x')+d_{\alpha_{\delta'}}(x',y')+d_{\alpha_{\delta'}}(y',y)
\\&\geq&d_{\alpha_{\delta'}}(x,x')+d_{\alpha_{\delta'}}(x'',y'')+d_{\alpha_{\delta'}}(y',y)-2|\delta'-\delta|
\\&\geq&d_{\alpha_{\delta}}(x,x')+d_{\alpha_{\delta}}(x'',y'')+d_{\alpha_{\delta}}(y',y)-2|\delta'-\delta|
\\&\geq&d_{\alpha_{\delta}}(x,x')+d_{\alpha_{\delta}}(x',y')+d_{\alpha_{\delta}}(y',y)-4|\delta'-\delta|
\\&\geq&d_{\alpha_{\delta}}(x,y)-4|\delta'-\delta|
\end{eqnarray*}
Consequently, \eqref{5.EstiPourBougIncl} holds.

Thus, for $\cl=\Pos\cup \Neg$ defined above, we obtain that 
\begin{equation}\label{5.ContinuityOfMinimalConnexion}
L(\cl,d_{\alpha_\delta})= L(\cl,d_{\alpha_{\delta'}})+\mathcal{O}(|\delta'-\delta|).
\end{equation}
\begin{remark}\label{R5.Uniqlkqjsdfhsqkdlfj}
Using \eqref{5.ContinuityOfMinimalConnexion}, it is easy to check that there is $\delta_{b,\o,\cl}<\displaystyle\delta_0$ s.t. for $\delta< \delta_{b,\o,\cl}$:
%\begin{itemize}
%\item
 If $\sigma$ is a minimal connection  in $(\cl,d_{\alpha_\delta})$, then $\sigma$ is a minimal connection  in $(\cl,d_{\alpha_0})$.
%\item 
%\end{itemize}
\end{remark}

We now state a preliminary result: Proposition \ref{P5.1}. Proposition \ref{C5.Ustruc} will be a direct consequence of Proposition \ref{P5.1}. %We have the following proposition:
\begin{prop}\label{P5.1}
For $\eta>0$ there is $\delta_\eta>0$ s.t. for $\delta_\eta>\delta>0$ there are $C_{\eta,\delta}>0$, $E_{\eta,\delta}\subset\R$ and $\xi_{\eta,\delta}\in C^\infty(\R^3,\R)$ s.t.
\begin{enumerate}
\item $|\n\xi_{\eta,\delta}|\leq \alpha_\delta$ in $\R^3$
\item $\sum_{i\in\N_k}\left\{\xi_{\eta,\delta}(p_i)-\xi_{\eta,\delta}(n_i)\right\}\geq  L(\cl,d_{\alpha_\delta})-\eta$
\item $\mathscr{H}^1(E_{\eta,\delta})\leq\eta$ and for all $t\in\R\setminus E_{\eta,\delta}$, $\{\xi_{\eta,\delta}=t\}$ is a closed two dimensional  surface with its second fundamental form which is bounded by $C_{\eta,\delta}$.
\end{enumerate}
\end{prop}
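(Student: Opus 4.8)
The plan is to construct the smooth structure function $\xi_{\eta,\delta}$ by mollifying the distance-type function naturally associated to a minimal connection of $\cl$ in the \emph{enlarged} metric $d_{\alpha_\delta}$, and then to exploit the key fact that we have slightly ``fattened'' the obstacle $\o$ to $\o_\delta$ so that the relevant distance function is genuinely Lipschitz with a \emph{strict} gradient bound, leaving room for the mollification not to spoil the constraint $|\n\xi_{\eta,\delta}|\le\alpha_\delta$. The starting object would be $\psi(x):=\sum_{i}\bigl\{d_{\alpha_\delta}(x,n_i)-d_{\alpha_\delta}(x,p_i)\bigr\}$ suitably normalized, or rather the optimal Kantorovich-type potential $\varphi$ achieving $L(\cl,d_{\alpha_\delta})$ in the dual formulation, extended to $\R^3$ via $\varphi(x)=\max_i\bigl(\varphi(p_i)-d_{\alpha_\delta}(x,p_i)\bigr)$-type formulas so that $\varphi$ is $1$-Lipschitz for $d_{\alpha_\delta}$, i.e.\ $|\n\varphi|\le\alpha_\delta$ a.e.\ by Proposition \ref{P5.ClassificationGeodesiquespecialcase}.5, and so that $\sum_i\{\varphi(p_i)-\varphi(n_i)\}=L(\cl,d_{\alpha_\delta})$ by the Brezis--Coron--Lieb duality recalled after \eqref{5.longcon}.

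The main point of passing to $\o_\delta$ (rather than working directly with $a^2=\alpha_0$) is that the gradient constraint for $d_{\alpha_\delta}$ is $\alpha_\delta$, which equals $b^2$ on the \emph{larger} set $\o_\delta$ and $1$ outside; crucially $\alpha_\delta<\alpha_{\delta/2}$ on the collar $\o_{\delta}\setminus\o_{\delta/2}$. Thus if I build $\varphi$ as a $d_{\alpha_\delta}$-Kantorovich potential but then want $|\n\xi|\le\alpha_\delta$, I should instead build the potential with respect to the \emph{smaller} constraint $\alpha_{2\delta}$ (or $\alpha_{\delta'}$ for some $\delta'>\delta$), so that $|\n\varphi|\le\alpha_{\delta'}\le\alpha_\delta$ with slack, mollify at a scale $\ll\delta'-\delta$, and use that convolution of an $\alpha_{\delta'}$-constrained function with a standard mollifier $\rho_s$ still satisfies $|\n(\varphi*\rho_s)|\le\alpha_\delta$ once $s$ is small enough that the support of $\rho_s$ does not reach from $\{\alpha_{\delta'}=b^2\}=\o_{\delta'}$ across into the region where $\alpha_\delta=1$; this is where the geometry of the collar $\o_\delta\setminus\o_{\delta'}$ is used. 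The estimate \eqref{5.ContinuityOfMinimalConnexion}, i.e.\ $L(\cl,d_{\alpha_{\delta'}})=L(\cl,d_{\alpha_\delta})+\mathcal{O}(|\delta'-\delta|)$, guarantees that replacing $\delta$ by $\delta'$ and the mollification both change $\sum_i\{\xi(p_i)-\xi(n_i)\}$ by at most $\mathcal{O}(\delta'-\delta)+\mathcal{O}(s)$, which can be absorbed into the $-\eta$ loss in property 2 by choosing $\delta'-\delta$ and $s$ small relative to $\eta$.

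For property 3 I would observe that by Sard's theorem applied to $\xi_{\eta,\delta}\in C^\infty$, almost every level $t$ is a regular value, so $\{\xi_{\eta,\delta}=t\}$ is a smooth closed surface; the measure-$\eta$ exceptional set $E_{\eta,\delta}$ collects the critical values together with the (measure-zero) set of levels where $\n\xi$ is too small. The uniform bound $C_{\eta,\delta}$ on the second fundamental form comes from the fact that $\xi_{\eta,\delta}$ is a \emph{fixed} smooth function (independent of $\v$) once $\eta,\delta$ are chosen, so its $C^2$ norm is finite; restricting to the range of levels where $|\n\xi_{\eta,\delta}|$ is bounded below gives a uniform curvature bound on the level sets via the standard formula for the second fundamental form in terms of $\n\xi$ and $\n^2\xi$. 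Closedness of the level sets follows because $\xi_{\eta,\delta}$ is (eventually) constant outside a large ball, as $d_{\alpha_{\delta'}}(\cdot,p_i)$ grows linearly and the $\max$/$\min$ structure stabilizes.

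The step I expect to be the genuine obstacle is reconciling the \emph{smoothness} demanded in property 3 with the \emph{sharp} gradient bound in property 1: the natural potential is only Lipschitz with gradient jumps precisely across $\p\o_{\delta'}$ (mirroring the non-$C^1$ behavior of $\psi_M$ noted in the outline), and mollification generically \emph{increases} $|\n\xi|$ near that interface, threatening the bound $|\n\xi_{\eta,\delta}|\le\alpha_\delta$ exactly where $\alpha_\delta$ drops from $1$ to $b^2$. The whole purpose of the $\delta\mapsto\delta'$ fattening is to create a definite geometric buffer so that the mollifier never straddles the discontinuity of $\alpha_\delta$ at the wrong side; making this quantitative -- choosing $s=s(\eta,\delta)$ so that $\varphi*\rho_s$ still respects $\alpha_\delta$ while keeping the second fundamental form bounded -- is the crux, and it relies essentially on the convexity of $\o$ (hence of $\o_{\delta'}$) to control how the projection $\Pi_{\overline{\o_\delta}}$ used in \eqref{5.EstiPourBougIncl} behaves, exactly as in the proof of \eqref{5.EstiPourBougIncl} above.
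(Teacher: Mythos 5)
Your construction of the potential and its smoothing coincides with the paper's Steps 1--3: the Brezis--Coron--Lieb dual potential $\xi_0$ on $\cl$ for the fattened metric $d_{\alpha_{\delta'}}$ with $\delta'>\delta$, the extension $\xi_1(x)=\max_i\{\xi_0(p_i)-d_{\alpha_{\delta'}}(x,p_i)\}$ with $|\n\xi_1|=\alpha_{\delta'}$ (via Proposition \ref{P5.ClassificationGeodesiquespecialcase}.5), and mollification at scale $t<\delta'-\delta$ so that $x+B(0,t)\subset\o_{\delta'}$ for $x\in\o_\delta$, which is exactly how the paper keeps $|\n(\xi_1\ast\rho_t)|\leq\alpha_\delta$ across the interface; the loss in property 2 is absorbed using \eqref{5.ContinuityOfMinimalConnexion}, as you say. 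So the step you single out as ``the genuine obstacle'' is resolved in the paper precisely by your buffer mechanism (the paper additionally multiplies by $(1-\beta)$ to create extra slack, whose purpose appears below).

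The genuine gap is in your treatment of property 3. Sard's theorem gives that the \emph{critical values} of the smooth function have measure zero, so a.e.\ level set is a smooth closed surface; but it does \emph{not} give a uniform bound $C_{\eta,\delta}$ on the second fundamental form outside a value set of $\mathscr{H}^1$-measure $\leq\eta$. The curvature of $\{\xi=t\}$ is controlled by $|D^2\xi|/|\n\xi|$, so you need a \emph{quantitative} lower bound on $|\n\xi|$ along the retained level sets, and for a general smooth function the set of regular values $t$ whose level sets pass through points with arbitrarily small gradient but non-small Hessian can have positive -- even large -- measure; your parenthetical claim that the levels where $\n\xi$ is too small form a measure-zero set is false in general. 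The paper closes this hole with a further $C^1$-approximation of the mollified function by a \emph{Morse} function (this is where the $(1-\beta)$ slack from Step 3 is spent, so that the perturbation preserves $|\n\xi|\leq\alpha_\delta$), arranged to equal $|x|/2$ outside a large ball $B(0,R)$ so that all level sets are compact and the critical points $x_1,\dots,x_l$ are finitely many and nondegenerate; then $\inf_{\overline{B(0,R)}\setminus\cup_iB(x_i,\rho)}|\n\xi_{\eta,\delta}|\geq\rho/C$ and $\mathscr{H}^1\left[\xi_{\eta,\delta}(\cup_iB(x_i,\rho))\right]\leq C\rho$, and choosing $C\rho\leq\eta$ produces $E_{\eta,\delta}$ together with the uniform curvature bound. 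Note also that your closedness argument (``$\xi$ eventually constant outside a large ball'') would actually ruin compactness of the level set at that constant value; the prescription $\xi=|x|/2$ at infinity is what makes every level set a closed (compact) surface.
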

\begin{proof}[Proof of Proposition \ref{C5.Ustruc}]
Let $\eta>0$ and fix $0<\delta<\delta_{\eta/2}$ ($\delta_{\eta/2}$ given by Proposition \ref{P5.1}) s.t. 
\[
L(\cl,d_{\alpha_\delta})+\frac{\eta}{2}\geq L(\cl,d_{a^2}) \text{ (using \eqref{5.ContinuityOfMinimalConnexion})}.
\] 
Consider $\v_\eta>0$ s.t. for $0<\v<\v_\eta$ we have 
\[
C\e^{-\gamma \delta/\v}<\v^4\,(C\text{ and }\gamma\text{ are given by \eqref{5.Uepsprochea}}).
\]
We take $\xi_\eta=\xi_{\eta/2,\delta}$ obtained from Proposition \ref{P5.1}.

Clearly, $\xi_\eta$ satisfies {\it 2.} and {\it 3.} with $E_\eta=E_{\eta/2,\delta}$ and $C_\eta=C_{\eta/2,\delta}$. 

It is direct to obtain that 
\[
|\n\xi_\eta|-U_\v^2\leq  \alpha_\delta-U_\v^2\leq\begin{cases}b^2-U_\v^2\leq0&\text{if }\dist(x,\o)<\delta\\\v^4&\text{otherwise}\end{cases}.
\]
It follows that $\xi_\eta$ satisfies {\it 1} since $\alpha_\delta\leq a^2$.
\end{proof}

\subsubsection{The proof of Proposition \ref{P5.1}}\label{Proofwithoutcompact}

%\begin{proof}[Proof of Proposition \ref{P5.1}]
We construct $\xi_{\eta,\delta}$ in five steps.\vspace{2mm}
\\Let $\eta>0$ and $0<\delta<\delta'<\delta_{b,\o,\cl}$ (here $\delta_{b,\o,\cl}$ is defined in Remark \ref{R5.Uniqlkqjsdfhsqkdlfj}). We denote $\alpha=\alpha_{\delta}$ and $\alpha'=\alpha_{\delta'}$.   Assume that $\Pos=\{p_1,...,p_k\}$ and $\Neg=\{n_1,...,n_k\}$ are s.t. $\sigma={\rm Id}$ is a minimal connection in $(\cl,d_{\alpha'})$.\vspace{2mm}
\\{\bf Step 1: }There is $\xi_0:\cl\to\R$ s.t. $\xi_0$ is $1$-Lipschitz in $(\cl,d_{\alpha'})$ and $\xi_0(p_i)-\xi_0(n_i)=d_{\alpha'}(p_i,n_i)$
\vspace{2mm}
\\This step is a direct consequence of Lemma 4.2 in \cite{BCL1} (see also Lemma 2.2 in \cite{Sandier1} or Lemma 2 in \cite{BrezisCristal}).% on construit $\xi_0\in\text{Lip}((\cl,d_{\alpha'}),\R)$ tel que $\xi_0$ soit $1-$Lipschitzienne et $\xi_0(p_i)-\xi_0(n_i)=d_{\alpha'}(p_i,n_i)$.
\vspace{2mm}
\\{\bf Step 2:} We extend $\xi_0$ to $\R^3$: there is some $\xi_1\in\Lip(\R^3,\R)$ s.t. $|\n\xi_1|=\alpha'$ and $\xi_{1|\cl}\equiv\xi_0$
\vspace{2mm}
\\Although the argument is the same as in \cite{Sandier1}, for the convenience of the reader, we recall the construction.

Consider 
\[
\xi_1(x)=\max_i\left\{\xi_0(p_i)-d_{\alpha'}(x,p_i)\right\},\,x\in\R^3.
\]
Then we have the following.
\begin{enumerate}[$\bullet$]
\item If $k=1$, $\xi_1(x)=\xi_0(p_1)-d_{\alpha'}(x,p_1)$ is s.t. $\xi_1\in\Lip(\R^3,\R)$ s.t. $|\n\xi_1|=\alpha'$ and $\xi_{1|\cl}\equiv\xi_0$. We now assume that $k\geq2$.
\item $\xi_{1|\cl}\equiv\xi_0$: let $M\in \cl$ and $i$ be s.t. $M\in\{p_i,n_i\}$ %, $\overline{M}\in\{p_i,n_i\}\setminus\{M\}$  
 and $j\neq i$, it is clear that
\[
\xi_0(p_i)-d_{\alpha'}(M,p_i)-\xi_0(p_j)+d_{\alpha'}(M,p_j)=\begin{cases}\xi_0(p_i)-\xi_0(p_j)+d_{\alpha'}(p_i,p_j)\geq0&\text{if }M=p_i\\\xi_0(n_i)-\xi_0(p_j)+d_{\alpha'}(n_i,p_j)\geq0&\text{if }M=n_i\end{cases};
\]
\item $|\n\xi_1|= \alpha'$: for all $i$ we have (Proposition \ref{P5.ClassificationGeodesiquespecialcase}.5)
\[
\left|\n\left[\xi_0(p_i)-d_{\alpha'}(x,p_i)\right]\right|=\left|\n d_{\alpha'}(x,p_i)\right|=\alpha'\text{ in }L^\infty(\R^3).
\]
%Thus $|\n\xi_1|= \alpha'$.
\end{enumerate}
\vspace{2mm}
{\bf Step 3:}  We construct a smooth approximation: let $0<\beta<\delta$, there is $\xi_2\in C^\infty(\R^3,\R)$ s.t. $|\n\xi_2|\leq (1-\beta) \alpha$  and 
\begin{equation}\label{5.DisplayForm1}
\sum_{i\in\N_k}\left\{\xi_2(p_i)-\xi_2(n_i)\right\}\geq L(\cl,d_{\alpha})-\eta/2
\end{equation}
%\vspace{2mm}
%\\
Let $(\rho_t)_{\delta'-\delta>t>0}$ be a classical mollifier, namely $\rho_t(x)=t^{-3}\rho(x/t)$ with $\rho\in C^\infty(\R^3,[0,1])$, ${\rm Supp}\,\rho\subset B(0,1)$ and $\int_{\R^3}\rho=1$. 

Consider 
\[
\xi_2(x):=(1-\beta)\xi_1\ast\rho_t(x).
\]
Because for $x\in\o_{\delta}$ we have $x+B(0,t)\subset \o_{\delta'}$, we have $|\n\xi_2|\leq (1-\beta) \alpha$:
\[
\frac{|\n\xi_2(x)|}{(1-\beta)}=|[\n \xi_1\ast\rho_t](x)|\leq
\begin{cases} 
1&\text{if $x\notin \o_\delta$}
\\
[|\n \xi_1|\ast\rho_t](x)=[\alpha'\ast\rho_t](x)= b^2&\text{if $x\in \o_\delta$}
\end{cases}.
 \] 
 Moreover Condition \eqref{5.DisplayForm1} is satisfied:
 \[
 \sum_{i\in\N_k}\left\{\xi_2(p_i)-\xi_2(n_i)\right\}\geq (1-\beta) \left[\sum_{i\in\N_k}\left\{\xi_1(p_i)-\xi_1(n_i)\right\}-\mathcal{O}(\delta')\right].
 \]
 Thus letting $\delta_\eta$ sufficiently small s.t. for $0<\beta<\delta<\delta'<\delta_\eta$ we have 
 \[
 (1-\beta) \left[\sum_{i\in\N_k}\left\{\xi_1(p_i)-\xi_1(n_i)\right\}-\mathcal{O}(\delta')\right]\geq L(\cl,d_{\alpha'})-\eta/3\geq L(\cl,d_{\alpha})-\eta/2.
 \]
 Therefore $\xi_2$ has the desired properties for $\beta<\delta<\delta_\eta$.%when $t$ and $\beta$ are small. On the other hand, the point estimate $|\n \xi_2(x)|\leq(1-\beta)\|\n \xi_1\|_{L^\infty(B(x,t))}$ implies that $|\n \xi_2|\leq\lambda\alpha$ for appropriate $\lambda<1$, provided $t$ is sufficiently small.
\\
%For $0<t<\delta'-\delta$, it is direct to check that $\xi^{t,\beta}\in C^2(\R^3,\R)$ and that in an open and bounded set $\tilde{\O}$ s.t. $\overline{\O}\subset\tilde{\O}$, we have
%\[
%|\n\xi^{t,\beta}|\leq (1-\beta)\alpha\text{ and }\|\xi^{t,\beta}-(1-\beta)\xi_1\|_{L^\infty(\tilde{\O})}=o_t(1).%\left\{\begin{array}{c}\end{array}\right.
%\]
%We fix $\delta_\eta=\delta'/2>0$ sufficiently small s.t. 
%\begin{enumerate}[$\bullet$]
%\item for $\delta_\eta>\delta$, $L(\cl,d_{\alpha'})\geq L(\cl,d_{\alpha})-\eta/8$, 
%\item for  $\delta_\eta>\delta>\beta$, $\beta L(\cl,d_{a^2})\leq\eta/4$,
%\item for $\delta_\eta>\delta,t>0$, $\|\xi^{t,\beta}-(1-\beta)\xi_1\|_{L^\infty(\tilde{\O})}\leq\eta/(16k)$.
%\end{enumerate}
%Thus, for $0<\delta,t<\delta_\eta$ and $0<\beta<\delta$, denoting $\xi_2=\xi^{t,\beta}$, we have
%\[
%\sum_{i\in\N_k}\left\{\xi_2(p_i)-\xi_2(n_i)\right\}\geq (1-\beta)L(\cl,d_{\alpha'})-\eta/8\geq(1-\beta)L(\cl,d_{\alpha})-\eta/4\geq L(\cl,d_{\alpha})-\eta/2.
%\]
\vspace{2mm}\\{\bf Step 4:} Let $\tilde{\O}$ be a smooth and bounded open neighborhood of $\overline{\O}$. We approximate $\xi_2$ by a Morse function $\xi_{\eta,\delta}$ in $C^1(\tilde{\O})$. Function $\xi_{\eta,\delta}$ is s.t. we have $\xi_{\eta,\delta}\in C^\infty(\R^3,\R)$ and
\[
\|\xi_{\eta,\delta}-\xi_2\|_{L^\infty(\tilde{\O})}\leq\eta/(4k),
\]
\[
|\n\xi_{\eta,\delta}|\leq \alpha,
\]
\[
\xi_{\eta,\delta}\text{ is a Morse function},
\]
\[
\exists R=R({\eta,\delta})>0\text{ s.t. in }\R^3\setminus B(0,R),\,\xi_{\eta,\delta}=|x|/2
\]
Clearly $\xi_{\eta,\delta}$ satisfies {\it1}. and {\it2}. of Proposition \ref{P5.1}. 
\vspace{2mm}
\\{\bf Step 5:} We follow \cite{Sandier1}. We construct $E_{\eta,\delta}$  
\vspace{2mm}\\
Let $\{x_1,...,x_l\}$ be the set of the critical points of $\xi_{\eta,\delta}$. Then there is $C=C({\eta,\delta})>0$ s.t.:
\[
\inf_{\overline{B(0,R)}\setminus\cup_i B(x_i,\rho)}|\n\xi_{\eta,\delta}|\geq\frac{\rho}{C}\text{ since the critical points are not degenerate}
\]
and
\[
\mathscr{H}^1\left[\xi_{\eta,\delta}(\cup_iB(x_i,\rho))\right]\leq C\rho.%\text{ since $\sup_{\cup_i B(x_i,\rho)}|\n\xi_{\eta,\delta}|\leq C\rho$}.
\]
We consider  $\rho>0$ s.t. $C\rho\leq\eta$ and set $E_{\eta,\delta}=\xi_{\eta,\delta}(\cup_iB(x_i,\rho))$.

For $t\notin E_{\eta,\delta}$, we have 
\begin{enumerate}[$\bullet$]
\item if $x\in \{\xi_{\eta,\delta}=t\}\setminus B(0,R)$, then the second fundamental form of   $\{\xi_{\eta,\delta}=t\}$ in $x$ is bounded by $C'_{\eta,\delta}$,% by $C/R$ 
\item if $x\in\{\xi_{\eta,\delta}=t\}\cap B(0,R)$, then the second form is bounded by $\displaystyle C''_{\eta,\delta}=\frac{C\sup_{\overline{B(0,R)}}{|D^2\xi_{\eta,\delta}|}}{\inf_{\overline{B(0,R)}\setminus\cup_i B(x_i,\rho)}|\n\xi_{\eta,\delta}|}.$
\end{enumerate}
We find that the second fundamental form is globally bounded by $C_{\eta,\delta}=\max\{C_{\eta,\delta}',C_{\eta,\delta}''\}$.
\subsection[Second step in the proof of Theorem \ref{T5.Main2}]{Second step in the proof of Theorem \ref {T5.Main2}: two structure functions}\label{S5.StrFonCCompAAAA}

%\subsubsection{Construction of a structure function}
 Letting $\mu_\v=\frac{\displaystyle\frac{U^2_\v}{2}|\n v_\v|^2+\frac{U^4_\v}{4\v^2}(1-|v_\v|^2)^2}{|\ln\v|}\,\mathscr{H}^3$, because by \eqref{5.UpperBoundGeneralCase} $\mu_\v$ is bounded, we get that, up to pass to a subsequence, $\mu_\v$ converges in the sense of the measures to $\mu$. 
 
 The proof of Theorem \ref{T5.Main2} consists in the identification of $\mu$. We want to prove that $\mu=\pi a^2\H^1_{|\cup\Gamma_i}$ with $\cup\Gamma_i$ is the unique the geodesic link joining the singularities of $g$. Because by Theorem \ref {T5.Main1} we have that $\mu$ and $\pi a^2\H^1_{|\cup\Gamma_i}$ has the same total mass, it suffices to prove that for all $x_0\in\O$ there is $r_{x_0}>0$ s.t. $\mu(B(x_0,r_{x_0}))\leq \pi a^2\H^1_{|\cup\Gamma_i}(B(x_0,r_{x_0}))$. Therefore we have to obtain lower bound for $F_\v(v_\v,\O\setminus\overline{B(x_0,r_{x_0})})/|\ln\v|$.
  
 In order to get a suitable lower bound to prove Theorem \ref {T5.Main2} we need two kinds of structure function:
\begin{itemize}
\item[$\bullet$] a structure function which is constant in a neighborhood $K=\overline{B(x_0,r_{x_0})}$ of an arbitrary point $x_0$ which is not on a geodesic link joining the singularities;
\item[$\bullet$] a structure function which is constant in a neighborhood $K=\overline{B(x_0,r_{x_0})}$ of an arbitrary point $x_0$ which is on {\bf the} geodesic link joining the singularities (assuming the geodesic link is unique). 
\end{itemize}
We briefly sketch the strategy used in Section \ref{S5.LowerBoundSandier} to get that $\mu=\pi a^2\H^1_{|\cup\Gamma_i}$. Using the upper bound \eqref{5.UpperBoundGeneralCase}, if $x_0$  is not on a geodesic link joining the singularities, then we get the lower bound (letting $K=\overline{B(x_0,r_{x_0})}$)
\begin{equation}\label{Aletlkqsdjfqskldjhfkshdf}
F_\v(v_\v,\O\setminus K)\geq F_\v(v_\v,\O)-o(|\ln\v|).
\end{equation}
From the estimate \eqref{Aletlkqsdjfqskldjhfkshdf} we deduce that $F_\v(v_\v,K)=o(|\ln\v|)$. Therefore $\mu$ is supported in the union of the geodesic links joining the singularities.
 
 Assuming that the geodesic link is unique (we denote this geodesic link by $\cup\Gamma_i$), we get a structure function which allows to obtain for $x_0$ which is on {\bf the} geodesic link and small $r_{x_0}$ the estimate
 \begin{equation}\label{mpfkjfjfjhfngngng}
 F_\v(v_\v,K)\leq \begin{cases}2\pi a(x_0)^2r_{x_0}|\ln\v|+o(|\ln\v|)&\text{if }x_0\notin\p\o\\\pi(1+b^2)r_{x_0}|\ln\v|+o(|\ln\v|)&\text{if }x_0\in\p\o\end{cases}.
 \end{equation}
 Therefore, we get that $\mu\leq\pi a^2\H^1_{|\cup\Gamma_i}$ and thus we obtain that $\mu=\pi a^2\H^1_{|\cup\Gamma_i}$.

In order to get estimates \eqref{Aletlkqsdjfqskldjhfkshdf} and \eqref{mpfkjfjfjhfngngng} we need two kinds of structure functions.

In the spirit of Proposition \ref{C5.Ustruc}, to get that the support of $\mu$ is included in the geodesic links (Estimate \eqref{Aletlkqsdjfqskldjhfkshdf}), we have the following structure function (Proposition \ref{C5.StructureFunctionCompact} does not need the hypothesis that there exists a unique geodesic link)
\begin{prop}\label{C5.StructureFunctionCompact}
Let $x_0\in\R^3$ be s.t. $x_0$ is not on a geodesic link of $\cl$ in $(\R^3,d_{a^2})$. There is $r_{x_0}>0$ s.t. denoting $K=\overline{B(x_0,r_{x_0})}$, for $\eta>0$ there are $\xi_{\eta,K}\in C^\infty(\R^3,\R)$, $E_{\eta,K}\subset\R$, $C_{\eta,K}>0$ and $\v_{\eta,K}>0$ s.t. for $0<\v<\v_{\eta,K}$, 
\begin{enumerate}[1.]
\item $|\n\xi_{\eta,K}|\leq \min(a^2,U_\v^2+\v^4)$ in $\R^3$ and $\xi_{\eta,K}$ is constant in $K$
\item $\sum_{i\in\N_k}\{\xi_{\eta,K}(p_i)-\xi_{\eta,K}(n_i)\}\geq L(\cl,d_{a^2})-\eta$
\item $\mathscr{H}^1(E_{\eta,K})\leq\eta$ and for all $t\in\R\setminus E_{\eta,K}$, $\{\xi_{\eta,K}=t\}$ is a closed hypersurface whose second fundamental form is bounded by $C_{\eta,K}$.
\end{enumerate}
\end{prop}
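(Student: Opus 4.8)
The plan is to reduce Proposition \ref{C5.StructureFunctionCompact} to Proposition \ref{P5.1} (equivalently, to the construction used in Proposition \ref{C5.Ustruc}), modifying only the distance function at the source so that the resulting structure function becomes locally constant near $x_0$. The key observation is that a structure function is essentially built in Steps 1--2 of the proof of Proposition \ref{P5.1} as an envelope of the functions $\xi_0(p_i)-d_{\alpha'}(\cdot,p_i)$, whose gradient has modulus exactly $\alpha'$ away from the singularities. To force the function to be constant in a ball $K=\overline{B(x_0,r_{x_0})}$, I would replace the metric $\alpha' h$ by a degenerate one that vanishes on $K$: concretely, set the conformal factor to $0$ (or to a very small value $\nu$, later sent to $0$) inside $K$ and equal to $\alpha_{\delta'}$ outside, and let $d_K$ be the associated geodesic distance. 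Since $d_K(\cdot,M)$ is constant on $K$ (any point of $K$ can be reached from any other at zero cost), the envelope $\xi_1(x)=\max_i\{\xi_0(p_i)-d_K(x,p_i)\}$ is automatically constant on $K$.

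The delicate point is quantitative: I must ensure that collapsing $K$ to zero cost does not shorten the minimal connection of $\cl$, so that property \emph{2.} of Proposition \ref{P5.1} survives. This is exactly where the hypothesis that $x_0$ is \textbf{not} on any geodesic link of $\cl$ in $(\R^3,d_{a^2})$ enters. First I would choose $r_{x_0}>0$ small enough that $K$ is disjoint from \emph{every} geodesic link of $\cl$; by compactness and the classification of geodesics in Proposition \ref{P5.ClassificationGeodesiquespecialcase}, the set of geodesics realizing $d_{a^2}(p_i,n_j)$ for each pair is compact, so there is a uniform positive distance between $K$ and all such minimizing curves. Consequently, for $r_{x_0}$ small the modified distances satisfy $d_K(p_i,n_j)=d_{a^2}(p_i,n_j)$ for all pairs (no minimizing path wants to route through $K$), whence $L(\cl,d_K)=L(\cl,d_{a^2})$ and the minimal connection is unchanged. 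This lets me reproduce Step 1 with $\xi_0(p_i)-\xi_0(n_i)=d_K(p_i,n_i)=d_{a^2}(p_i,n_i)$ and hence obtain $\sum_i\{\xi_1(p_i)-\xi_1(n_i)\}\geq L(\cl,d_{a^2})$.

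With the correct envelope $\xi_1$ in hand, I would run Steps 3--5 of the proof of Proposition \ref{P5.1} essentially verbatim. The mollification in Step 3 preserves the constancy on a slightly shrunken ball $K'=\overline{B(x_0,r_{x_0}-t)}$ (convolution of a locally constant function against a kernel of support radius $t<r_{x_0}$ is still constant there), and a uniform factor $(1-\beta)$ guarantees $|\n\xi_2|\le(1-\beta)\alpha_\delta$ while only losing $o(1)$ in the connection sum. The Morse approximation (Step 4) and the measure-zero bad set construction (Step 5) are local perturbations that I can carry out away from $K'$, keeping $\xi_{\eta,K}$ constant on an even slightly smaller ball which I rename $K$; the bound $|\n\xi_{\eta,K}|\le\alpha_\delta\le a^2$ and, after the $\v$-dependent thresholding as in the proof of Proposition \ref{C5.Ustruc} (choosing $\v_{\eta,K}$ so that $C\e^{-\gamma\delta/\v}<\v^4$), the bound $|\n\xi_{\eta,K}|\le U_\v^2+\v^4$ both follow exactly as before. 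The bounded second fundamental form comes out of Step 5 unchanged.

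The main obstacle I anticipate is the first inequality of property \emph{2.}, i.e.\ proving rigorously that shrinking the cost on $K$ to zero does not create a cheaper connection, \emph{uniformly} over the finitely many pairs $(p_i,n_j)$. Naively $d_K\le d_{\alpha_{\delta'}}$, so one must rule out that some pair's geodesic gains length by detouring through the now-free region $K$; this is precisely the content of choosing $r_{x_0}$ so that $K$ avoids all minimizing geodesics. The cleanest way to make this airtight is to argue by contradiction: if for a sequence $r_{x_0}\to0$ the distance $d_K(p_i,n_j)$ stayed strictly below $d_{a^2}(p_i,n_j)$, then the corresponding $d_K$-geodesics would have to pass through $K$, and taking a limit would produce a $d_{a^2}$-geodesic through $x_0$, contradicting that $x_0$ lies on no geodesic link. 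Once this uniform statement is secured, the remainder of the argument is a faithful repetition of the construction already established in Proposition \ref{P5.1}, and the additional constancy property \emph{1.} is built in by design.
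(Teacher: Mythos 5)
Your construction is, in substance, the paper's own: your degenerate metric $d_K$ coincides with the pseudometric $d^K_f(x,y)=\min\{d_f(x,y),\,d_f(x,K)+d_f(y,K)\}$ that the paper introduces, and rerunning Steps 1--5 of Proposition \ref{P5.1} with it is exactly Proposition \ref{P5.1compact}. However, there is a genuine gap in your quantitative step. You claim that for $r_{x_0}$ small one has $d_K(p_i,n_j)=d_{a^2}(p_i,n_j)$ \emph{for all pairs}, and your contradiction argument ends with ``a $d_{a^2}$-geodesic through $x_0$, contradicting that $x_0$ lies on no geodesic link.'' But the hypothesis only excludes $x_0$ from geodesics joining $p_i$ to $n_{\sigma(i)}$ for \emph{minimal} connections $\sigma$: the point $x_0$ may perfectly well lie on a geodesic joining a non-minimally paired $p_i$ and $n_j$, in which case $d_K(p_i,n_j)<d_{a^2}(p_i,n_j)$ strictly and no contradiction arises. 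So the ``all pairs'' statement is false in general, and with it your one-line deduction that $L(\cl,d_K)=L(\cl,d_{a^2})$ ``and the minimal connection is unchanged'' --- which you need even to launch Step 1, since the $1$-Lipschitz function $\xi_0$ there requires the chosen permutation to be minimal \emph{for $d_K$}. The repair is the two-step argument the paper actually gives: first, $d^K_f\leq d_f\leq d^K_f+\diam(K)$ (estimate \eqref{5.ContinuityOfDistanceCompact}), so, since there are finitely many permutations and every non-minimal one exceeds $L(\cl,d_{a^2})$ by a fixed positive gap, for small $r$ every minimal connection for $d^K_{a^2}$ is already minimal for $d_{a^2}$ (Remark \ref{R5.Uniqlkqjsd}); second, for the pairs of such a connection your compactness argument is valid and yields $d^K_{a^2}(p_i,n_{\sigma(i)})=d_{a^2}(p_i,n_{\sigma(i)})$ (Proposition \ref{P5.PropertiesdK}.2.iii), whence $L(\cl,d^K_{a^2})=L(\cl,d_{a^2})$ (Remark \ref{R5.NouvelleRemark}.1).

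A secondary slip: the Morse and bad-set steps cannot come out ``unchanged.'' A function constant on a ball is not Morse near that ball (every interior point of $K$ is a degenerate critical point), and the level set $\{\xi_{\eta,K}=C_0\}$ through the constant value contains $K$, hence is not a closed hypersurface with bounded second fundamental form. The paper deals with this by taking a global Morse approximation $\xi_3$ that is $C^1$-close to $\xi_2$ on the enlarged ball $K_2$, interpolating on the annulus $K_2\setminus K$ to make the function exactly constant on $K$ while keeping $|\n\xi|\leq b^2$ there, and --- crucially --- adding the tiny interval $E^2=\xi(K_2)$, with $\mathscr{H}^1(E^2)\leq 2\eta^2\delta^2$, to the exceptional set $E_{\eta,K}$, so that property \emph{3} is only asserted away from these values. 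Your proposal must make this exclusion explicit; as written, property \emph{3} fails at $t=C_0$.
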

And in order to compare $\mu$ with $\pi a^2\H^1_{|\cup\Gamma_i}$ (Estimate \eqref{mpfkjfjfjhfngngng}) we have the following structure function
\begin{prop}\label{C5.StructureFunctionCompactBisBis}
Assume that there exists a unique geodesic link $\cup\Gamma_i$ joining the singularities of $g$  in $(\R^3,d_{a^2})$.

Let $x_0\in \cup\Gamma_i\setminus\cl$, then  there is $r_{x_0}>0$ s.t. denoting $K=\overline{B(x_0,r_{x_0})}$, for $\eta>0$ there are $\xi_{\eta,K}\in C^\infty(\R^3,\R)$, $E_{\eta,K}\subset\R$, $C_{\eta,K}>0$ and $\v_{\eta,K}>0$ s.t. for $0<\v<\v_{\eta,K}$, 
\begin{enumerate}[1.]
\item $|\n\xi_{\eta,K}|\leq \min(a^2,U_\v^2+\v^4)$ in $\R^3$ and $\xi_{\eta,K}$ is constant in $K$
\item \begin{tabular}{ccc}$\sum_{i\in\N_k}\{\xi_{\eta,K}(p_i)-\xi_{\eta,K}(n_i)\}$&$\geq$&$ L(\cl,d_{a^2})-a^2\H^1_{|\cup\Gamma_i}(K)-\eta$\\&$=$&$\begin{cases} L(\cl,d_{a^2})-2a^2(x_0)r_{x_0}-\eta&\text{if }x\notin\p\o\\ L(\cl,d_{a^2})-(1+b^2)r_{x_0}-\eta&\text{if }x\in\p\o\end{cases}$\end{tabular}
\item $\mathscr{H}^1(E_{\eta,K})\leq\eta$ and for all $t\in\R\setminus E_{\eta,K}$, $\{\xi_{\eta,K}=t\}$ is a closed hypersurface whose second fundamental form is bounded by $C_{\eta,K}$.
\end{enumerate}
\end{prop}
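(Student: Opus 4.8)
The plan is to run the five-step construction of Proposition~\ref{P5.1} (and its passage to the limit $\alpha_\delta\to a^2$ from the proof of Proposition~\ref{C5.Ustruc}), but to compose the distance function with a \emph{collapse across $K$} that forces the resulting Lipschitz map to be constant on $K$ at the price, in the quantity $\sum_i\{\xi(p_i)-\xi(n_i)\}$, of exactly the $a^2$-length of $\cup_i\Gamma_i$ inside $K$. As in Proposition~\ref{P5.1} I fix $0<\delta<\delta'<\delta_{b,\o,\cl}$, write $\alpha'=\alpha_{\delta'}$, and assume after relabelling that $\sigma=\mathrm{Id}$ is the minimal connection; by the assumed uniqueness of the geodesic link together with Remark~\ref{R5.GeodLink}.1 the curves $\Gamma_1,\dots,\Gamma_k$ are pairwise disjoint and $x_0$ lies on exactly one of them, say $\Gamma_{i_0}$, with $x_0\notin\cl$. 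I then choose $r_{x_0}>0$ so small that $\overline{B(x_0,2r_{x_0})}$ contains no singularity, meets $\cup_i\Gamma_i$ only along $\Gamma_{i_0}$, and, when $x_0\notin\p\o$, is disjoint from $\p\o$ and sits in a region where $\alpha'$ is constant (taking $\delta'$ small relative to $\dist(x_0,\p\o)$).

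Recall from Step~2 of Proposition~\ref{P5.1} the extension $\xi_1(x)=\max_i\{\xi_0(p_i)-d_{\alpha'}(x,p_i)\}$, which satisfies $|\n\xi_1|=\alpha'$, $\xi_{1|\cl}=\xi_0$ and, along $\Gamma_{i_0}$, equals $\xi_0(p_{i_0})-d_{\alpha'}(\cdot,p_{i_0})$ (the maximum being attained there by the $i_0$-th term). Writing $K'=\overline{B(x_0,r_{x_0}+t)}$, with $t$ the mollification radius of Step~3, and $\rho=d_{\alpha'}(p_{i_0},K')$, I replace only the $i_0$-th term by the \emph{collapsed distance} $\hat d(x)=\min\{d_{\alpha'}(x,p_{i_0}),\,\rho+d_{\alpha'}(x,K')\}$ and set $\xi_1^K=\max\{\xi_0(p_{i_0})-\hat d,\ \max_{i\neq i_0}[\xi_0(p_i)-d_{\alpha'}(\cdot,p_i)]\}$. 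Being a maximum of functions that are $1$-Lipschitz for $d_{\alpha'}$, $\xi_1^K$ obeys $|\n\xi_1^K|\leq\alpha'$; and since $d_{\alpha'}(\cdot,K')\equiv0$ on $K'$, its $i_0$-th term is the constant $\xi_0(p_{i_0})-\rho$ there, which for $r_{x_0}$ small dominates all other terms (because $x_0$ lies strictly on $\Gamma_{i_0}$ and strictly off the other $\Gamma_j$). Hence $\xi_1^K$ is constant on $K'$, and remains constant on $K$ after mollification with $\rho_t$.

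The crux is the effect of this surgery on $\sum_i\{\xi_1^K(p_i)-\xi_1^K(n_i)\}$. From $\hat d\leq d_{\alpha'}(\cdot,p_{i_0})$ one gets $\xi_1^K\geq\xi_1$, with equality off the shadow $\{d_{\alpha'}(\cdot,p_{i_0})>\rho+d_{\alpha'}(\cdot,K')\}$ cast by $K'$ away from $p_{i_0}$. Using that $\Gamma_{i_0}$ is $d_{\alpha'}$-minimizing and the description of geodesics in Proposition~\ref{P5.ClassificationGeodesiquespecialcase}, one computes $\hat d(p_{i_0})=0$ and $\hat d(n_{i_0})=d_{\alpha'}(p_{i_0},n_{i_0})-\mathrm{long}_{\alpha'}(\Gamma_{i_0}\cap K')$, so the $i_0$-th pair loses exactly $\mathrm{long}_{\alpha'}(\Gamma_{i_0}\cap K')$. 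I expect the delicate points to be the following two. First, that no other pair is affected: for $j\neq i_0$ one keeps $\xi_1^K(p_j)=\xi_0(p_j)$ and $\xi_1^K(n_j)=\xi_0(n_j)$ as soon as $\hat d(p_j)\geq\xi_0(p_{i_0})-\xi_0(p_j)$ and $\hat d(n_j)\geq\xi_0(p_{i_0})-\xi_0(n_j)$; I plan to obtain these from the \emph{uniqueness} of the geodesic link, which makes the Lipschitz inequalities $\xi_0(p_{i_0})-\xi_0(m)<d_{\alpha'}(p_{i_0},m)$ strict, with a gap bounded below uniformly in small $r_{x_0}$, for every $m\in\cl\setminus\{n_{i_0}\}$, while $\hat d$ differs from $d_{\alpha'}(\cdot,p_{i_0})$ by at most $\mathrm{diam}_{\alpha'}(K')=O(r_{x_0})$. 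Second, the exact identification of $\mathrm{long}_{\alpha'}(\Gamma_{i_0}\cap K')$ with the claimed value, which amounts to localizing the closest-point projections onto $K'$ to the geodesic entry and exit points; this is immediate when $x_0\notin\p\o$ (the arc is a diameter on which $\alpha'$ is constant) and needs the refraction structure of Proposition~\ref{P5.ClassificationGeodesiquespecialcase} when $x_0\in\p\o$. Granting these, $\sum_i\{\xi_1^K(p_i)-\xi_1^K(n_i)\}=L(\cl,d_{\alpha'})-\mathrm{long}_{\alpha'}(\Gamma_{i_0}\cap K')$.

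It then remains to follow Steps~3--5 of Proposition~\ref{P5.1} verbatim on $\xi_1^K$: mollifying with $\rho_t$ to gain a factor $(1-\beta)$ and $C^\infty$-regularity while keeping the plateau on $K$, perturbing to a Morse function with gradient $\leq\alpha_\delta$ that is radial outside a large ball, and selecting the exceptional set $E_{\eta,K}$ of near-critical levels so that every other level set is a closed hypersurface of second fundamental form bounded by some $C_{\eta,K}$. Finally I pass to the limit exactly as in the proof of Proposition~\ref{C5.Ustruc}: by \eqref{5.ContinuityOfMinimalConnexion} and \eqref{5.Uepsprochea}, taking $\delta,\delta',\beta,t$ small and then $\v_{\eta,K}$ small turns $|\n\xi|\leq\alpha_\delta$ into $|\n\xi_{\eta,K}|\leq\min(a^2,U_\v^2+\v^4)$ and upgrades $L(\cl,d_{\alpha'})$ to $L(\cl,d_{a^2})$ and $\mathrm{long}_{\alpha'}(\Gamma_{i_0}\cap K')$ to $a^2\H^1_{|\cup_i\Gamma_i}(K)$, up to an $\eta$-error. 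The closed forms follow by evaluating $\int_{\Gamma_{i_0}\cap K}a^2\,\di\mathscr{H}^1$: for $x_0\notin\p\o$ the intersection is one segment through $x_0$ of Euclidean length $2r_{x_0}$ on which $a^2\equiv a^2(x_0)$, giving $2a^2(x_0)r_{x_0}$; for $x_0\in\p\o$ the geodesic refracts at $x_0$ into two segments of length $r_{x_0}$, one inside $\o$ where $a^2=b^2$ and one outside where $a^2=1$, giving $(1+b^2)r_{x_0}$.
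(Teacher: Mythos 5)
Your construction is, at bottom, the paper's own: the collapsed distance $\hat d(x)=\min\{d_{\alpha'}(x,p_{i_0}),\,\rho+d_{\alpha'}(x,K')\}$ is exactly the pseudometric $d^{K'}_{\alpha'}(\cdot,p_{i_0})$ that the paper introduces in Section \ref{S5.ProofDeuxEtTroisStructFunc}, and your mollification/Morse/exceptional-set steps and the passage $\alpha_\delta\to\min(a^2,U_\v^2+\v^4)$ coincide with Steps 3--6 of Proposition \ref{P5.1compact} and the proof of Proposition \ref{C5.StructureFunctionCompact}. The difference is where the sharp constant comes from. The paper re-runs the Brezis--Coron--Lieb duality \emph{inside} the collapsed pseudometric $(\cl,d^{K_1}_{\alpha'})$, so that $\sum_i\{\xi_0(p_i)-\xi_0(n_i)\}=\sum_i d^{K_1}_{\alpha'}(p_i,n_i)$ holds by construction, and then obtains the exact loss from the separate geometric identity $L(\cl,d^{K}_{a^2})=L(\cl,d_{a^2})-a^2\H^1_{|\cup\Gamma_i}(K)$ (Proposition \ref{P5.PropertiesdK}.2 and Remark \ref{R5.NouvelleRemark}.2). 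You instead keep the \emph{uncollapsed} optimal potential $\xi_0$ and collapse a single term of the max, which obliges you to show by hand that only the pair $(p_{i_0},n_{i_0})$ is affected; and here lies a genuine gap. Your justification --- that uniqueness of the geodesic link makes the inequalities $\xi_0(p_{i_0})-\xi_0(m)<d_{\alpha'}(p_{i_0},m)$ strict for $m\in\cl\setminus\{n_{i_0}\}$ --- is false for a general maximizer $\xi_0$ produced by Lemma 4.2 of \cite{BCL1}. Take $a\equiv1$ and four collinear points $p_1=0$, $n_1=1$, $p_2=10$, $n_2=11$: the minimal connection (the identity) and the geodesics are unique, yet $\varphi(x)=-x$ is an optimal potential which saturates the non-matched constraint $\varphi(p_1)-\varphi(n_2)=11=d(p_1,n_2)$, and moreover every $x_0\in\Gamma_1$ lies on the geodesic from $p_1$ to $n_2$. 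With this $\xi_0$ your collapsed term raises $\xi_1^K(n_2)$ by essentially $2r_{x_0}$, so the sum $\sum_i\{\xi_1^K(p_i)-\xi_1^K(n_i)\}$ drops by about $4r_{x_0}$ instead of the permitted $2r_{x_0}=a^2\H^1_{|\cup\Gamma_i}(K)$. This excess is of order $r_{x_0}$ with a wrong constant, and since the statement fixes $r_{x_0}$ \emph{before} $\eta$, it cannot be absorbed into the $\eta$-error; the sharp constant is precisely what the measure identification $\mu=\pi a^2\H^1_{|\cup\Gamma_i}$ in Theorem \ref{T5.Main2} requires, so the argument as written fails.

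The gap is repairable but requires an idea you do not supply: one must \emph{select} the dual potential. Because the underlying transportation problem has a unique optimal matching, a strictly complementary optimal dual exists (Goldman--Tucker), i.e.\ one with strict slack on all non-matched pairs $(p_i,n_j)$, and the Kantorovich-type extension $\tilde\varphi=\min_j\{\varphi(n_j)+d_{\alpha'}(\cdot,n_j)\}$ preserves both optimality and these strict slacks; with such a $\xi_0$ your domination on $K'$ and the invariance of the values at $\cl\setminus\{n_{i_0}\}$ do hold for $r_{x_0}$ small compared with the (fixed, positive) slack gaps. Even then you should watch the order of quantifiers: your gaps are extracted for a fixed $\delta'$, whereas $r_{x_0}$ must be chosen independently of $\eta$ (hence of $\delta,\delta'$), so the strict slacks must be established for $d_{a^2}$ and transferred to $d_{\alpha'}$ via \eqref{5.EstiPourBougIncl}. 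The paper's route --- duality directly in $d^{K}$, plus the pairwise computation of $d^K_{a^2}(p_i,n_i)$, where uniqueness of the link enters only through the disjointness of the $\Gamma_i$'s and Proposition \ref{P5.PropertiesdK}.2 --- sidesteps the entire selection problem, which is why it needs no statement about saturation of non-matched constraints.
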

The proofs of Propositions \ref{C5.StructureFunctionCompact} and \ref{C5.StructureFunctionCompactBisBis} are given in Section \ref{S5.ProofDeuxEtTroisStructFunc}.
\subsection{Proof of Propositions \ref{C5.StructureFunctionCompact} and \ref{C5.StructureFunctionCompactBisBis} }\label{S5.ProofDeuxEtTroisStructFunc}
In order to prove Propositions \ref{C5.StructureFunctionCompact} $\&$ \ref{C5.StructureFunctionCompactBisBis}, we follow the proof of Proposition \ref{C5.Ustruc} using a special pseudo metric taking care of the compact set $K$.

\subsubsection{Definition and properties of a special pseudometric}
Let $f:\R^3\to[b^2,1]$ be a Borel function and let $\emptyset\neq K\subset\R^3$ be a smooth compact set. We define 
\[
d_f^K(x,y)=\min\left\{d_f(x,y),d_f(x,K)+d_f(y,K)\right\}.
\]
Here $d_f(x,K)=\min_{y\in K} d_{f}(x,y)$.

Then $d^K_f$ is a pseudometric in $\R^3$. If, in addition $K\cap\cl=\emptyset$, then $d^K_f$ is a distance in $\cl$.  Therefore the minimal connection of $\cl$ and the length of a minimal connection $L(\cl,d^K_f)$ with respect to $d_f^K$ make sense.

Clearly, if $x,y\in\R^3$, then we have $d_f^K(x,y)=0\Leftrightarrow x=y\text{ or }x,y\in K$. One may easily prove that
\begin{equation}\label{5.ContinuityOfDistanceCompact}
d^K_f(x,y)\leq d_f(x,y)\leq d_f^K(x,y)+\diam(K).
\end{equation}

We are interested in the special case  $K=\overline{B(x_0,r)}$ for some $x_0\in\O$ and $f=\alpha_\delta$ with $\delta\in[0,\delta_0]$, $\delta_0=\displaystyle10^{-2}\min\{1;\dist(\p\o,\p\O)\}$. 

Note that we have a similar estimate to \eqref{5.ContinuityOfMinimalConnexion}, namely for $0\leq\delta<\delta'<\delta_0$ 
\begin{equation}\label{5.ContinuityOfMinimalConnexionCompact}
L(\cl,d^K_{\alpha_\delta})= L(\cl,d^K_{\alpha_{\delta'}})+\mathcal{O}(|\delta'-\delta|).
\end{equation}
%Now we underline some properties of $d^K_{a^2}$.
From \eqref{5.ContinuityOfDistanceCompact}, \eqref{5.ContinuityOfMinimalConnexionCompact} and as in Remark \ref{R5.Uniqlkqjsdfhsqkdlfj} we have the following.
\begin{remark}\label{R5.Uniqlkqjsd}
There is $\tilde{\delta}_{b,\o,\cl}<\displaystyle10^{-2}\cdot\dist(\p\o,\p\O)$ s.t. for $x_0\in\R^3$, $0<\delta,r< \tilde{\delta}_{b,\o,\cl}$ and $K=\overline{B(x_0,r)}$ s.t. $K\cap\cl=\emptyset$:
%\begin{itemize}
%\item
 If $\sigma$ is a minimal connection  in $(\cl,d^K_{\alpha_\delta})$, then $\sigma$ is a minimal connection  in $(\cl,d_{\alpha_0})$.
%\item 
%\end{itemize}
\end{remark}
\begin{definition} For $y\notin K$ and $x\in\R^3$, we say that 
\begin{enumerate}[$\bullet$]
\item $\Gamma$ is a {\it $K$-curve joining $x,y$} if $\Gamma$ is a finite union of curves included in $\R^3\setminus K$ (except perhaps their endpoints) s.t.: 
\begin{enumerate}[-]
\item their endpoints are either $x$ or $y$ or an element of $\p K$,
\item if $z\in\{x,y\}\setminus K$, then $z$ is an endpoint of a connected component of $\Gamma$.
\end{enumerate}
\item $\Gamma$ is a {\it minimal $K$-curve joining $x,y$} if $\Gamma=\cup_i \Gamma_i$ is a $K$-curve joining $x,y$, where the $\Gamma_i$'s are disjoint curves and $\sum_i{\rm long}_{a^2}(\Gamma_i)=d^K_{a^2}(x,y)$.
\end{enumerate} 
\end{definition}
We next sum up the main properties of $d^K_{a^2}$.% in the following proposition.
\begin{prop}\label{P5.PropertiesdK}
Let $x_0\in\R^3$, $r>0$ and $K=\overline{B(x_0,r)}$. %Then:
\begin{enumerate}[1.]
\item If $y\notin K$ then for all $x\in\R^3$ there is a minimal $K$-curve joining $x,y$. Moreover, a minimal $K$-curve is the union of at most two geodesics in $(\R^3,d_{a^2})$.
\item For $x,y\in\R^3$, $x\neq y$ and $x_0\neq x,y$ we have: 
\begin{enumerate}[i.]
\item If $x_0\in\R^3\setminus\p\o$ and $x_0$ is on a geodesic joining $x,y$ in $(\R^3,d_{a^2})$, then there is $r_{x_0,x,y}>0$ s.t. for all $r<r_{x_0,x,y}$, $d^K_{a^2}(x,y)=d_{a^2}(x,y)-2a^2(x_0)r$,
\item If $x_0\in\p\o$ and $x_0$ is on a geodesic joining $x,y$ in $(\R^3,d_{a^2})$, then there is $r_{x_0,x,y}>0$ s.t. for all $r<r_{x_0,x,y}$, $d^K_{a^2}(x,y)=d_{a^2}(x,y)-(1+b^2)r$,
\item If $x_0$ is not on a geodesic joining $x,y$ in $(\R^3,d_{a^2})$, then there is $r_{x_0,x,y}>0$ s.t. for all $r<r_{x_0,x,y}$, $d^K_{a^2}(x,y)=d_{a^2}(x,y)$.
\end{enumerate}
\end{enumerate}
\end{prop}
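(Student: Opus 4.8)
The plan is to exploit the fact that $d_{a^2}$-geodesics have a rigid structure already catalogued in Proposition \ref{P5.ClassificationGeodesiquespecialcase} (each is a union of at most three line segments), so the pseudometric $d^K_{a^2}$ can be analyzed by examining how an optimal path is forced to avoid the ball $K=\overline{B(x_0,r)}$. I would first establish the existence and structure claim in item 1 by a direct compactness/minimization argument on the two competing quantities defining $d^K_{a^2}(x,y)=\min\{d_{a^2}(x,y),\,d_{a^2}(x,K)+d_{a^2}(y,K)\}$. The first term is realized by a genuine $d_{a^2}$-geodesic, which by definition is disjoint from the interior of $K$ whenever it contributes the minimum; the second term splits into $d_{a^2}(x,K)=\min_{z\in K}d_{a^2}(x,z)$ and $d_{a^2}(y,K)$, each minimum attained since $K$ is compact and $d_{a^2}$ is continuous (Proposition \ref{P5.ClassificationGeodesiquespecialcase}.5). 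The key point is that a minimizing $z\in K$ for $d_{a^2}(x,K)$ must lie on $\p K$ and the connecting geodesic must meet $K$ only at $z$: otherwise one could shortcut. Concatenating the two half-geodesics at their respective touchpoints on $\p K$ yields a $K$-curve that is a union of at most two $d_{a^2}$-geodesics, giving the claim.

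For item 2 the unifying idea is a \emph{local perturbation} argument: near $x_0$, on small scales, the geodesic joining $x$ and $y$ looks like a straight segment through $x_0$, and removing a small ball of radius $r$ around $x_0$ replaces an arc of $d_{a^2}$-length $\approx 2a^2(x_0)r$ by a path that hugs $\p K$. Concretely, in case (iii), where $x_0$ is off every geodesic, I would argue that for $r$ small enough $K$ does not touch any geodesic, so the detour option in the definition of $d^K_{a^2}$ is never cheaper and $d^K_{a^2}(x,y)=d_{a^2}(x,y)$; this uses continuity of $d_{a^2}$ and the fact that the (finite-segment) geodesics stay a positive distance from $x_0$. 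In case (i), where $x_0\in\R^3\setminus\p\o$ lies on a geodesic and hence on a segment passing through a region where $a$ is locally constant, I would compute $d_{a^2}(x,K)+d_{a^2}(y,K)$ by noting that the two optimal touchpoints on $\p K$ are the points where the geodesic enters and exits $K$; along the geodesic the integrand $a^2$ equals the constant $a^2(x_0)$ on the removed chord of length $2r$, so the saved length is exactly $2a^2(x_0)r$, giving $d^K_{a^2}(x,y)=d_{a^2}(x,y)-2a^2(x_0)r$.

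Case (ii), with $x_0\in\p\o$, is the delicate one and I expect it to be the main obstacle. Here the removed portion of the geodesic straddles the interface $\p\o$: on one side $a^2=1$, on the other $a^2=b^2$, and the geodesic refracts at $x_0$. I would use the strict convexity of $\o$ and Proposition \ref{P5.ClassificationGeodesiquespecialcase}.4 to control how the optimal $K$-curve enters and leaves $B(x_0,r)$, arguing that for $r$ small the two touchpoints on $\p K$ sit on opposite sides of $\p\o$ and the saved $d_{a^2}$-length splits into a contribution of length $r$ weighted by $1$ (in $\R^3\setminus\overline\o$) and a contribution of length $r$ weighted by $b^2$ (inside $\o$), for a total of $(1+b^2)r$. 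The technical difficulty is to show that the refraction point of the perturbed path converges to $x_0$ as $r\to0$ and that no cheaper route skims along $\p\o$; here strict convexity of $\o$ is essential, since it prevents the interface from containing segments that a competing path could exploit. Once the length bookkeeping across the interface is pinned down, choosing $r_{x_0,x,y}$ small enough to guarantee all geometric configurations are as described completes the proof.
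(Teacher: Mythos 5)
Your overall strategy coincides with the paper's: item 1 by concatenating at most two genuine $d_{a^2}$-geodesics touching $\p K$, item 2.i by removing a chord of constant weight $a^2(x_0)$, item 2.ii by splitting the saving across the interface as $r+b^2r$, and item 2.iii by taking $r$ small. Items 1 and 2.i are essentially sound; note only that for the lower bound in 2.i the paper does not identify the optimal touchpoints with the entry/exit points of the geodesic (your claim, which is unproved as stated): it completes an \emph{arbitrary} minimal $K$-curve, with touchpoints $x',y'\in\p K$, by the two radii $[x_0,x']$ and $[x_0,y']$, each of $d_{a^2}$-length exactly $a^2(x_0)r$ since $K$ avoids $\p\o$, and then uses that the completed curve joins $x$ to $y$ through $x_0$, hence has length at least $d_{a^2}(x,y)$. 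This bypasses any analysis of where the optimum touches $\p K$.

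There are two genuine gaps. First, in 2.iii your inference ``for $r$ small enough $K$ does not touch any geodesic, so the detour option is never cheaper'' is a non sequitur: for a fixed $r$, the fact that $K$ avoids every geodesic gives no lower bound on $d_{a^2}(x,K)+d_{a^2}(y,K)$. The paper's argument uses the metric characterization that $x_0$ lies on a geodesic joining $x,y$ if and only if $d_{a^2}(x,y)=d_{a^2}(x,x_0)+d_{a^2}(x_0,y)$; since $x_0$ is on no geodesic, there is $\eta>0$ with $d_{a^2}(x,x_0)+d_{a^2}(x_0,y)\geq d_{a^2}(x,y)+\eta$, and since $d_{a^2}(z,x_0)\leq r$ for all $z\in K$, any $K$-curve realizing the detour, once completed by radii, costs at least $d_{a^2}(x,y)+\eta-2r$, a contradiction for $r<\eta/2$. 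Second, and more seriously, in 2.ii the assertion ``the two touchpoints sit on opposite sides of $\p\o$'' is precisely what must be proved: nothing in your sketch excludes a minimal $K$-curve whose two arms both approach $K$ through the weight-$1$ region, in which case the saving could be as large as $2r>(1+b^2)r$ and the claimed equality would fail. The paper's mechanism is that concatenating a geodesic from $x$ to $x_0$ with one from $x_0$ to $y$ yields a geodesic of the pair $x,y$ passing through $x_0$, and $r_{x_0,x,y}$ is chosen (via the classification in Proposition \ref{P5.ClassificationGeodesiquespecialcase}, in particular assertion 4.d — this, rather than strict convexity invoked directly, is what rules out tangential configurations) so that for \emph{every} such geodesic $\Gamma$, the set $(\Gamma\cap K)\setminus\p\o$ has exactly two connected components, one in $\o$ and one in $\R^3\setminus\overline{\o}$; a both-outside approach would produce a geodesic violating this. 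The lower bound then again follows by completing a minimal $K$-curve with the two radii, now of total $d_{a^2}$-length $(1+b^2)r$ by convexity of $\o$. In particular, no limiting analysis of refraction points as $r\to0$ is needed: the argument is static for each fixed small $r$.
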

Proposition \ref{P5.PropertiesdK} is proved in the \ref{S5.ProofP5.PropertiesdK}.

%This proposition is proved Appendix \ref{S5.ProofPropPseudo}. 
As a direct consequence of Proposition \ref{P5.PropertiesdK} we have the following.
\begin{remark}\label{R5.NouvelleRemark}
Let $\cl\subset\p\O$ as above and $x_0\in\O\setminus\cl$. 
\begin{enumerate}
\item If for all minimal connection $\sigma$ of $\cl$ and  for $i\in\{1,...,k\}$, we have that $x_0$ is not on a geodesic joining $p_i$ with $n_{\sigma(i)}$ in $(\R^3,d_{a^2})$, then there is $r_{x_0,\cl}>0$ s.t. for all $0<r<r_{x_0,\cl}$, we have for $K=\overline{B(x_0,r)}$
\begin{equation}\label{5.Someequaliiesforgeodesiccompactlink}
L(\cl,d^K_{a^2})=L(\cl,d_{a^2}).
\end{equation}
\item Assume that there exists a unique geodesic link $\cup\Gamma_i$ joining $\Pos$ with $\Neg$ (we may assume that $\sigma={\rm Id}$ is the minimal connection). Therefore from Remark \ref{R5.GeodLink}.1, for $i\neq j$we have $\Gamma_i\cap\Gamma_j=\emptyset$. If for some $i$ we have that $x_0$ which is on the geodesic joining $p_i$ with $n_i$, then there is  $r_{x_0,\cl}>0$ s.t. for all $0<r<r_{x_0,\cl}$, we have for $K=\overline{B(x_0,r)}$
\begin{equation}\label{5.Someequaliiesforgeodesiccompactlinkbisrepetita}
L(\cl,d^K_{a^2})=L(\cl,d_{a^2})-a^2\H^1_{|\cup\Gamma_i}(K)=\begin{cases}L(\cl,d_{a^2})-2a^2(x_0)r&\text{if }x_0\notin\p\o\\L(\cl,d_{a^2})-(1+b^2)r&\text{if }x_0\in\p\o\end{cases}.
\end{equation}
\end{enumerate}
\end{remark}
\subsubsection{The proofs of Propositions \ref{C5.StructureFunctionCompact} $\&$ \ref{C5.StructureFunctionCompactBisBis}}
This pseudo metric allows us to construct an intermediate structure function in the spirit of Proposition \ref{P5.1} which is constant in a "small" compact $K$.
%A structure function is given in Proposition \ref{C5.StructureFunctionCompact}. We first state a preliminary result whose proof is postponed in Appendix \ref{ProofCompactlsmdkfhj}. 
\begin{prop}\label{P5.1compact}
Let  $K=\overline{B(x_0,r)}$ be s.t. $\overline{B(x_0,2r)}\subset\R^3\setminus\cl$ and $\eta>0$. 

Then there is $\delta_{\eta,K}>0$ s.t. for $0<\delta<\delta_{\eta,K}$ there are $C_{\eta,K,\delta}>0$, $E_{\eta,K,\delta}\subset\R$ and $\xi_{\eta,K,\delta}\in C^\infty(\R^3,\R)$ satisfying
\begin{enumerate}
\item $|\n\xi_{\eta,K,\delta}|\leq\alpha_\delta $ in $\R^3$ and $\xi_{\eta,K,\delta}$ is constant in $K$ ({\it i.e.} $|\n\xi_{\eta,K,\delta}|\leq\alpha_\delta\1_{\R^3\setminus K}$),
\item $\sum_{i\in\N_k}\{\xi_{\eta,K,\delta}(p_i)-\xi_{\eta,K,\delta}(n_i)\}\geq L(\cl,d_{\alpha_\delta}^K)-\eta$,
\item $\mathscr{H}^1(E_{\eta,K,\delta})\leq\eta$ and for $t\in\R\setminus E_{\eta,K,\delta}$, $\{\xi_{\eta,K,\delta}=t\}$ is a closed hypersurface whose second fundamental form is bounded by $C_{\eta,K,\delta}$.
\end{enumerate}
\end{prop}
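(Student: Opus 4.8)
The plan is to mimic the five-step construction in the proof of Proposition \ref{P5.1}, but to replace the distance $d_{\alpha_\delta}$ by the pseudometric $d^K_{\alpha_\delta}$ throughout, thereby forcing the structure function to be constant on $K$. The key observation is that $d^K_{\alpha_\delta}(x,y)=0$ precisely when $x=y$ or $x,y\in K$, so that collapsing $K$ to a point is compatible with the variational characterization of $L(\cl,d^K_{\alpha_\delta})$. Since $\overline{B(x_0,2r)}\subset\R^3\setminus\cl$, the restriction of $d^K_{\alpha_\delta}$ to $\cl$ is a genuine distance (by the discussion following the definition of $d^K_f$), so Lemma 4.2 in \cite{BCL1} applies.

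First I would run Step 1: using Lemma 4.2 of \cite{BCL1} applied to the distance $d^K_{\alpha_\delta}$ on $\cl$, obtain $\xi_0:\cl\to\R$ that is $1$-Lipschitz with respect to $d^K_{\alpha_\delta}$ and satisfies $\xi_0(p_i)-\xi_0(n_i)=d^K_{\alpha_\delta}(p_i,n_i)$ for the minimal connection $\sigma=\mathrm{Id}$ (which we may assume by Remark \ref{R5.Uniqlkqjsd}). Next, for Step 2, I would extend $\xi_0$ to all of $\R^3$ by the formula
\[
\xi_1(x)=\max_i\left\{\xi_0(p_i)-d^K_{\alpha_\delta}(x,p_i)\right\},\quad x\in\R^3 .
\]
The crucial new point is that $x\mapsto d^K_{\alpha_\delta}(x,p_i)$ is constant (equal to $d_{\alpha_\delta}(p_i,K)$) on $K$, since for $x\in K$ any minimal $K$-curve to $p_i$ uses the ``cheapest'' exit point of $\partial K$. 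Hence each competitor in the max, and therefore $\xi_1$, is constant on $K$; away from $K$ one checks $|\n\xi_1|=\alpha_\delta$ using Proposition \ref{P5.PropertiesdK} together with the gradient identity $|\n d_{\alpha_\delta}(\cdot,p_i)|=\alpha_\delta$ of Proposition \ref{P5.ClassificationGeodesiquespecialcase}.5. That $\xi_{1|\cl}\equiv\xi_0$ follows exactly as in the proof of Proposition \ref{P5.1}, since $\sigma=\mathrm{Id}$ is a minimal connection for $d^K_{\alpha_\delta}$.

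For Steps 3 and 4 I would proceed verbatim as in Proposition \ref{P5.1}: mollify $\xi_1$ with a kernel $\rho_t$ (with $t<\delta'-\delta$, and moreover $t<r$ so that the support of the mollification does not escape $K$ near $\partial K$), scale by $(1-\beta)$ to absorb the mollification loss, and then perturb to a Morse function $\xi_{\eta,K,\delta}$ in $C^1(\tilde\O)$. The gradient bound $|\n\xi_2|\le(1-\beta)\alpha_\delta$ is preserved, and since $\xi_1$ is constant on a neighborhood of $K$ (by the $2r$-separation hypothesis one may arrange $\xi_1$ constant on $\overline{B(x_0,2r)}\supsetneq K$), both the mollification and the Morse perturbation can be chosen to leave $\xi_{\eta,K,\delta}$ constant on $K$. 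Step 5 (the construction of the exceptional set $E_{\eta,K,\delta}$ controlling the second fundamental form of the level sets away from the nondegenerate critical points) is then identical to \cite{Sandier1} and to Proposition \ref{P5.1}.

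The main obstacle is \textbf{preserving the constancy on $K$ through the smoothing steps} while simultaneously keeping the gradient bound $|\n\xi|\le\alpha_\delta$ and the lower bound on $\sum_i\{\xi(p_i)-\xi(n_i)\}$. The nonsmooth exit behavior of $K$-curves at $\partial K$ means $\xi_1$ is only Lipschitz there, so one must take the mollification radius $t$ strictly smaller than $r$ and use the hypothesis $\overline{B(x_0,2r)}\subset\R^3\setminus\cl$ to guarantee a genuine neighborhood on which $\xi_1$ is \emph{exactly} constant (not merely Lipschitz), so that $\xi_2$ and the Morse approximant inherit this. Once Proposition \ref{P5.1compact} is established, Propositions \ref{C5.StructureFunctionCompact} and \ref{C5.StructureFunctionCompactBisBis} follow by choosing $\delta$ small (as in the deduction of Proposition \ref{C5.Ustruc} from Proposition \ref{P5.1}, using \eqref{5.Uepsprochea} to pass from $\alpha_\delta$ to $\min(a^2,U_\v^2+\v^4)$) and by invoking the two cases of Remark \ref{R5.NouvelleRemark} to evaluate $L(\cl,d^K_{a^2})$, which supplies the respective right-hand sides in assertion 2 of each proposition.
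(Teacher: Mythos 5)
Your overall skeleton — run the five steps of Proposition \ref{P5.1} with $d^{K}_{\alpha_\delta}$ in place of $d_{\alpha_\delta}$, using the max-extension $\xi_1(x)=\max_i\{\xi_0(p_i)-d^K_{\alpha'}(x,p_i)\}$, which is indeed constant on the collapsed set — is exactly the paper's strategy. But there are two genuine gaps in how you carry the constancy on $K$ through the smoothing steps. First, mollifying a function that is constant \emph{exactly} on $K=\overline{B(x_0,r)}$ at scale $t$ yields constancy only on the eroded ball $\overline{B(x_0,r-t)}$, not on $K$; your condition $t<r$ does not help. You correctly sense that $\xi_1$ must be constant on a genuine neighborhood of $K$, but your proposed fix — arranging $\xi_1$ constant on $\overline{B(x_0,2r)}$ — amounts to replacing $d^K$ by $d^{K'}$ with $K'=\overline{B(x_0,2r)}$, and collapsing a set costs its $d_{\alpha}$-diameter in assertion 2. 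This loss is $\mathcal{O}(r)$ with $r$ fixed, and it is fatal downstream: Proposition \ref{C5.StructureFunctionCompactBisBis} needs the sharp value $L(\cl,d_{a^2})-2a^2(x_0)r_{x_0}-\eta$, whereas collapsing a ball of radius $2r_{x_0}$ centered on the geodesic yields only $L(\cl,d_{a^2})-4a^2(x_0)r_{x_0}$. The paper instead uses a buffer of size $\delta$ (tunable, absorbed into $\eta$): it collapses $K_1=\overline{B(x_0,r+2\delta)}$ working with $\alpha'=\alpha_{2\delta}$, mollifies at scale $t<10^{-2}\delta$ so that constancy survives on $K_2=\overline{B(x_0,r+\delta)}\supset K$, and controls the loss via the chain $L(\cl,d^{K_1}_{\alpha'})\geq L(\cl,d^{K}_{\alpha_\delta})-\mathcal{O}(\delta)$, with $\delta<\delta_{\eta,K}$.

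Second, your claim that ``the Morse perturbation can be chosen to leave $\xi_{\eta,K,\delta}$ constant on $K$'' is false: a Morse function has isolated critical points, so no function Morse on $\R^3$ can be constant on a set with nonempty interior. The paper resolves this by first taking a global Morse approximant $\xi_3$ with $\|\xi_3-\xi_2\|_{C^1(\tilde{\O})}<\eta^2\delta^2$, and then \emph{regluing} a constant $C_0$ in $K$ across the annulus $K_2\setminus K$ of width $\delta$ (its Step 5). This has a consequence your Step 5 misses: the final function is not Morse inside $K_2$, so the exceptional set cannot consist only of the critical-value part as in Proposition \ref{P5.1}; one must add $E^2_{\eta,K,\delta}=\xi_{\eta,K,\delta}(K_2)$, which has measure at most $2\eta^2\delta^2$ precisely because of the $C^1$-closeness $\|\xi_3-C_0\|_{C^1(K_2)}<\eta^2\delta^2$. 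Without excluding these values, a level set with value near $C_0$ passes through the glued annulus, where the gradient may be small without any nondegeneracy and the second fundamental form is uncontrolled, so assertion 3 fails. Your concluding paragraph on deducing Propositions \ref{C5.StructureFunctionCompact} and \ref{C5.StructureFunctionCompactBisBis} from Proposition \ref{P5.1compact} via Remark \ref{R5.NouvelleRemark} and \eqref{5.Uepsprochea} is correct and matches the paper.
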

\begin{proof}[Proof of Proposition \ref{C5.StructureFunctionCompact}]
Assume that $\sigma={\rm Id}$ is a minimal connection in $(\cl,d_{a^2})$. 

Let $r_{x_0,\cl}>0$ (given by Remark \ref{R5.NouvelleRemark}) be s.t. for  $K=\overline{B(x_0,r_{x_0,\cl}/2)}$, we have %for all $i\in\{1,...,k\}$: $d^K_{a^2}(p_i,n_{i})=d_{a^2}(p_i,n_i)$. Consequently, 
$L(\cl,d_{a^2})=L(\cl,d^K_{a^2})$. %(this may also be obtained with \eqref{5.Someequaliiesforgeodesiccompactlink} in Remark \ref{R5.NouvelleRemark}). 
We fix $r_{x_0}$ of Proposition \ref{C5.StructureFunctionCompact} s.t.: $r_{x_0}\leq r_{x_0,\cl}/2$ and $\overline{B(x_0,2r_{x_0})}\cap\cl=\emptyset$. We let $K=\overline{B(x_0,r_{x_0})}$.

Now we apply Proposition \ref{P5.1compact}: there is $\delta_{\eta/2,K}>0$ s.t. for $0<\delta< \delta_{\eta/2,K}$, there are $C_{\eta/2,K,\delta}>0$, $E_{\eta/2,K,\delta}\subset\R$ and $\xi_{\eta/2,K,\delta}\in C^\infty(\R^3,\R)$ satisfying the conclusions of Proposition \ref{P5.1compact}.

From \eqref{5.ContinuityOfMinimalConnexionCompact} and  \eqref{5.Someequaliiesforgeodesiccompactlink}, one may fix $0<\delta< \delta_{\eta/2,K}$ s.t. 
\[
L(\cl,d_{a^2})- L(\cl,d^K_{\alpha_{\delta}})<\eta/2.
\]

We consider $\v_{\eta,K}>0$ s.t. for $0<\v<\v_{\eta,K}$, we have $C\e^{-\gamma \delta/\v}<\v^4\,(C\text{ and }\gamma$ are given by \eqref{5.Uepsprochea}). We obtain the result by taking $C_{\eta,K}=C_{\eta/2,K,\delta}$, $E_{\eta,K}=E_{\eta/2,K,\delta}$ and $\xi_{\eta,K}=\xi_{\eta/2,K,\delta}$.
\end{proof}
\begin{proof}[Proof of  Proposition \ref{C5.StructureFunctionCompactBisBis}]
Assume that $\sigma={\rm Id}$ is {\bf the} minimal connection of $\cl$ and that $x_0$ is on the geodesic joining $p_1$ with $n_1$. Let $r_{x_0}<\min\{ r_{x_0,p_1,n_1},\min_{i=2,...,k} r_{x_0,p_i,n_i}\}$ ($r_{x_0,p_i,n_i}$ given by Proposition \ref{P5.PropertiesdK}.2) s.t. $B(x_0,2r_{x_0})\subset\R^3\setminus\cl$.

It suffices to apply Proposition \ref{P5.1compact} as in the proof of Proposition \ref{C5.StructureFunctionCompact} combined with \eqref{5.Someequaliiesforgeodesiccompactlinkbisrepetita}. 
\end{proof}
\subsubsection{The proof of Proposition \ref{P5.1compact}}%\label{ProofCompactlsmdkfhj}
The main point is that we require that $\xi_{\eta,K,\delta}$ is constant in $K$. All the other requirements are satisfied by the map $\xi_{\eta,\delta}$ constructed in Proposition \ref{P5.1}.

For $\delta<r/2$, let $K_1=\overline{B(x_0,r+2\delta)}$ and $K_2=\overline{B(x_0,r+\delta)}$. We denote $\alpha=\alpha_\delta$ and $\alpha'=\alpha_{2\delta}$.

We assume that $\sigma={\rm Id}$ is a minimal connection for $(\cl,d_{\alpha'}^{K_1})$ and we fix $\eta$.
\vspace{2mm}
\\
{\bf Step 1:} As in the proof of Proposition \ref{P5.1}, there is a function $\xi_0:\cl\to\R$ $1$-Lipschitz in $(\cl,d_{\alpha'}^{K_1})$ and s.t. $\xi_0(p_i)-\xi_0(n_i)=d_{\alpha'}^{K_1}(p_i,n_i)$.
%\vspace{2mm}\\
%Idem que dans la preuve de la proposition \ref{P5.1}.
\vspace{2mm}
\\{\bf Step 2:} We extend $\xi_0$ to a map $\xi_1:\R^3\to\R$, $1$-Lipschitz w.r.t. $d_{\alpha'}^{K_1}$ and thus constant in  $K_1$
\vspace{2mm}
\\For example, we may take
\[
\xi_1(x)=\max_i\{\xi_0(p_i)-d^{K_1}_{\alpha'}(x,p_i)\}.
\]
As in the proof of Proposition \ref{P5.1}, $\xi_{1|\cl}=\xi_0$ and $|\n \xi_1|\leq \alpha'$. Moreover, $\xi_1$ is constant in $K_1$. Indeed, for all $x\in K_1$, we have $\xi_0(x)=\max_i\xi_0(p_i)-d^{K_1}_{\alpha'}(x,p_i)=\max_i\xi_0(p_i)-d_{\alpha'}(K_1,p_i)$.
\vspace{2mm}
\\{\bf Step 3:}  For $0<\beta<\delta$, we approximate $\xi_1$ by $\xi_2\in C^\infty(\R^3,\R)$ satisfying $|\n\xi_2|\leq (1-\beta) \alpha$, $\sum_{i\in\N_k}\{\xi_2(p_i)-\xi_2(n_i)\}\geq L(\cl,d^K_\alpha)-\eta/2$ (for $\delta$ sufficiently small), and s.t. $\xi_2$ is constant in $K_2$\vspace{2mm}
\\The approximation $\xi_2$ is obtained (as in Proposition \ref{P5.1}) by regularization of $(1-\beta)\xi_1$ using a mollifier $\rho_t$ ($t<10^{-2}\delta$) and by noting that
\[
L(\cl,d^K_{\alpha})\geq L(\cl,d^K_{\alpha'})\geq L(\cl,d^{K_2}_{\alpha'})\geq L(\cl,d^{K_1}_{\alpha'})\geq L(\cl,d^K_{\alpha'})-\mathcal{O}(\delta)= L(\cl,d^K_{\alpha})-\mathcal{O}(\delta).
\]
We fix $\delta_{\eta,K}>0$ s.t. for $\delta<\delta_{\eta,K}$ we have 
\begin{enumerate}[$\bullet$]
\item $L(\cl,d^{K_1}_{\alpha'})\geq L(\cl,d^K_{\alpha})-\eta/8$ and $2k\eta^2\delta^2\leq 10^{-2}\eta$ (this condition is used below)
\item $\|\xi_1-\xi_2\|_{L^\infty}\leq{\eta}/({16k})$ and $\delta L(\cl,d_{a^2})\leq\eta/4$.
\end{enumerate}
\vspace{2mm}
{\bf Step 4:} Let $\tilde{\O}$ be a neighborhood of $\overline{\O}$. We approximate $\xi_2$ in $C^1(\tilde{\O})$ by a Morse function $\xi_3\in C^\infty(\R^3,\R)$ 

We let $\xi_3\in C^\infty(\R^3,\R)$ be s.t.
\[
\|\xi_3-\xi_2\|_{C^1(\tilde{\O})}<\eta^2\delta^2,
\]
\[
|\n\xi_3|\leq(1-\beta/2) \alpha,
\]
\[
\xi_3\text{ is a Morse function},
\]
\[
\exists R>0\text{ s.t. in }\R^3\setminus B(0,R),\,\xi_3=|x|/2.
\]
\vspace{2mm}
\\{\bf Step 5:} We modify $\xi_3$ in order to have $\xi_{\eta,K,\delta}\equiv C_0$ in $K$%, une composante connexe.
\vspace{2mm}
\\
By construction, there is $C_0\in\xi_3(K)$ s.t. $\|\xi_3-C_0\|_{C^1(K_2)}<\eta^2\delta^2$. Noting that $\dist(\p K_2,K)=\delta$, one may construct $\xi_{\eta,K,\delta}\in C^\infty(\R^3)$ s.t.
\[
\left\{\begin{array}{c}\xi_{\eta,K,\delta}=\xi_3\text{ in }\R^3\setminus K_2,\,\xi_{\eta,K,\delta}\equiv C_0\text{ in }K,\\\|\xi_{\eta,K,\delta}-C_0\|_{L^\infty(K_2)}<\eta^2\delta^2 \text{ and }|\n\xi_{\eta,K,\delta}|\leq b^2\text{ in }K_2.\end{array}\right.
\]
Clearly $\xi_{\eta,K,\delta}$ satisfies {\it 1.} and  {\it 2.} in Proposition \ref{P5.1compact}. 
\vspace{2mm}
\\{\bf Step 6:} We construct $E_{\eta,K,\delta}$ 
\vspace{2mm}

For $\rho>0$, we consider $E_{\eta,K,\delta}^1=\xi_{\eta,K,\delta}(\cup_iB(x_i,\rho))$ where $\{x_1,...,x_l\}$ is the set of the critical points of $\xi_{\eta,K,\delta}$ in $B(0,R)\setminus K_2$. 

For the same reasons as in Proposition \ref{P5.1}, we have $\mathscr{H}^1(E_{\eta,K,\delta}^1)\leq C\rho$.

We also define $E_{\eta,K,\delta}^2=\xi_{\eta,K,\delta}(K_2)$. By construction, we have $\mathscr{H}^1(E_{\eta,K,\delta}^2)\leq 2\eta^2\delta^2$.

Thus it suffices to consider $\rho$ sufficiently small in order to have  $ C\rho\leq10^{-2}\eta$ and to set $E_{\eta,K,\delta}=E_{\eta,K,\delta}^1\cup E_{\eta,K,\delta}^2$.

\subsection[Second step in the proof of Theorem \ref{T5.MainSymCase}]{Second step in the proof of Theorem \ref{T5.MainSymCase}: a structure function in presence of symmetries}\label{S5.SymCase}
In this section we assume that $\O=B(0,1)$ and that $\o=B(0,r_0)$, with $r_0\in(0,1)$. 

Consider $\cl=\{(1,0,0),(-1,0,0)\}=\{p,n\}$, $p=(1,0,0)$. It is clear that in this situation, the line segment $[p,n]$ is the unique geodesic between $p$ and $n$ in $(\R^3,d_{a^2})$.  

In order to get a very sharp lower bound (matching with the upper bound up to a $\mathcal{O}(1)$ term), we need a structure function as in Proposition \ref{C5.StructureFunctionCompact} but with "$\eta=0$".

This structure function is given in the following proposition.% the existence of the following function structure.
\begin{prop}\label{P5.StructFonctSymCas}
Let $M\in\O\setminus[p,n]$. Then there is   $\mathcal{V}$,  an open neighborhood of $M$ s.t. for $\v>0$, there is $\xi_\v:\R^3\to\R$ a Lipschitz function s.t.
\begin{enumerate}
\item $\xi_\v(p)-\xi_\v(n)=d_{U_\v^2}(p,n)$,
\item $|\n \xi_\v|\leq U^2_\v$,
\item $\xi_\v\equiv0$ in $\mathcal{V}$,
\item $\forall\, t\in\xi_\v(\R^3)\setminus\{0,\xi_\v(p),\xi_\v(n)\}$, $\{\xi_\v=t\}$  is a sphere whose radius is at least $1$.% and thus its second fundamental form is bounded by $1$.
\end{enumerate}
\end{prop}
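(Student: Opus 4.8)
The plan is to exploit the full rotational symmetry of the configuration. Since $\O=B(0,1)$, $\o=B(0,r_0)$ and the boundary datum is $\equiv1$, the uniqueness in Proposition \ref{P5.Fond1} forces $U_\v$ to be radial, $U_\v(x)=U_\v(|x|)$; moreover its radial profile is nondecreasing in $|x|$ (it is $\approx b$ inside $\o$ and rises to $1$ on $\p\O$), a fact I would first record from the maximum principle applied to the radial ODE satisfied by $U_\v$. Consequently $U_\v^2$ is radial and the metric $U_\v^2 h$ is invariant under every rotation fixing the axis $\overline{pn}$. Together with the monotonicity of the profile this gives, exactly as for $d_{a^2}$ in the preamble, that the diameter $[n,p]$ is the unique $d_{U_\v^2}$-geodesic from $n$ to $p$ and that $d_{U_\v^2}(p,n)=2\int_0^1 U_\v(r)^2\di r=:\ell_\v$.

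Second, I would build a rotationally symmetric (about $\overline{pn}$) candidate $\zeta_\v$ whose level sets are the spheres centered on the axis, normalized so that along the axis $\zeta_\v$ equals the signed $U_\v^2$-arclength. Then $\n\zeta_\v=U_\v^2 e_1$ on $[n,p]$, which yields $\zeta_\v(p)-\zeta_\v(n)=\int_{[n,p]}U_\v^2=\ell_\v$ (item 1), and each such sphere meets the axis orthogonally, so the level sets are genuine round spheres. The crucial point is the bound $|\n\zeta_\v|\le U_\v^2$ off the axis. Since an axis-centered sphere is nearest to the origin precisely at the axis intersection lying closest to the origin, the monotonicity of $r\mapsto U_\v(r)$ shows that $U_\v^2$ is minimal on that sphere at the very point where $|\n\zeta_\v|$ is prescribed. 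It then suffices to choose the radii $R(t)\ge1$ of the foliation so that consecutive spheres do not bunch together away from the axis (off-axis normal spacing at least the on-axis spacing): this makes $|\n\zeta_\v|$ nonincreasing as one leaves the axis along a sphere, giving $|\n\zeta_\v|\le U_\v^2$ everywhere and the radius bound $R(t)\ge1$ (item 4).

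Third, I must arrange $\xi_\v\equiv0$ near $M$ without disturbing the spherical level sets for $t\ne0$, and here I would use $M\notin[n,p]$. Choose the origin of the foliation so that the $0$-level passes close to $M$, and replace the single $0$-sphere by the thin region pinched between two axis-centered spheres $S_0^{+},S_0^{-}$ of radius $\ge1$ that are tangent at their common (interior) axis point and enclose a lune containing a small neighborhood $\mathcal{V}$ of $M$. Setting $\xi_\v\equiv0$ on this lune leaves $\xi_\v$ strictly monotone along the axis, because the lune meets the axis only at the tangency point, so item 1 is untouched; every level $\{\xi_\v=t\}$ with $t\ne0$ is still one of the round spheres of the foliation, while the values $\xi_\v(p),\xi_\v(n)$ are excluded exactly because near $p,n$ the foliation terminates at extreme (non-spherical) caps. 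Since we only need \emph{some} neighborhood, we may take $\mathcal{V}$ arbitrarily small, which makes the lune thin and the gradient bound there trivial.

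The main obstacle is the second step: producing a single foliation by spheres of radius $\ge1$ that simultaneously saturates $|\n\xi_\v|=U_\v^2$ along $[n,p]$, to force the exact value $\ell_\v$, and keeps $|\n\xi_\v|\le U_\v^2$ at every off-axis point. Both the radial monotonicity of $U_\v$ and the elementary geometric fact that axis-centered spheres are nearest the origin at their axis intersection are essential, and this is precisely why the argument is confined to the fully symmetric case $\O=B(0,1)$, $\o=B(0,r_0)$, $p=-n$: outside it one cannot expect round level sets with a uniform curvature bound, which is exactly the feature that upgrades the error term from $o(|\ln\v|)$ to $\mathcal{O}(1)$.
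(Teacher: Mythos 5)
Your proposal is correct and is essentially the paper's own argument: the paper first records exactly your symmetry fact (Proposition \ref{P5.DomaineSymetrique}, $U_\v$ radial and nondecreasing), then proves the statement via a ``dumbbell lemma'' (Lemma \ref{L5.Lemmedelaltere}) that foliates space by spheres centered on the axis with radius at least $1$, prescribes the value of $\xi_\v$ along $[n,p]$ as the $U_\v^2$-arclength, and gets $|\n\xi_\v|\leq U_\v^2$ from your nearest-point-to-the-origin observation. The only difference is that the paper makes your ``spacing'' condition automatic by taking two \emph{concentric} families centered at $2p$ and $2n$ (radii in $(1,2-x_0)$ and $(1,2+x_0)$, so $|\n\xi_\v|$ is constant on each sphere), with the two extreme spheres exteriorly tangent at the axis point of the plane $\{x=x_0\}\ni M$ and $\xi_\v\equiv0$ on the complement of the two balls — precisely your tangent-sphere lune, and $\v$-independent as required.
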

The proof of Proposition \ref{P5.StructFonctSymCas} is in the next section (Section \ref{SPreuveTroisStrucFonqsdjfh}).
\subsection{Proof of Proposition \ref{P5.StructFonctSymCas}}\label{SPreuveTroisStrucFonqsdjfh}
%In this section we assume that \begin{itemize}
%\item $\O=B(0,1)$ and $\o=B(0,r_0)$, with $r_0\in(0,1)$;
%\item $\cl=\{(1,0,0),(-1,0,0)\}=\{p,n\}$, $p=(1,0,0)$.
%\end{itemize}
%It is clear that the line segment $[p,n]$ is the unique geodesic between $p$ and $n$ in $(\R^3,d_{a^2})$.  

Using the spherical symmetry of $\O$, $\o$ and the minimality of $U_\v$, one may easily prove the following proposition.
\begin{prop}\label{P5.DomaineSymetrique}
The unique minimizer $U_\v$ of $E_\v$ in $H^1_1$, is radially symmetric and non decreasing.
\end{prop}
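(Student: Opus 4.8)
The plan is to prove the two assertions separately: \emph{radial symmetry} follows from the invariance of the problem under rotations together with the uniqueness of the minimizer, while \emph{monotonicity} is read off from the radial form of the Euler--Lagrange equation of Proposition \ref{P5.Fond1}.2.

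For the symmetry I would observe that the domains $\O=B(0,1)$, $\o=B(0,r_0)$ and the datum $g\equiv1$ are invariant under every rotation $R\in SO(3)$, and that $a(Rx)=a(x)$ since $a$ depends only on $|x|$. Hence, for $u\in H^1_1$, the rotated map $u^R(x):=u(Rx)$ again lies in $H^1_1$ (the boundary value $g\equiv1$ is preserved), and the change of variables $y=Rx$ (Jacobian $1$, and $\O$ itself is $R$-invariant) gives $E_\v(u^R)=E_\v(u)$, because both $|\n u^R(x)|^2=|(\n u)(Rx)|^2$ and $(a(x)^2-|u^R(x)|^2)^2=(a(Rx)^2-|u(Rx)|^2)^2$ transform correctly. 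Applying this to $u=U_\v$, the map $U_\v^R$ is again a minimizer of $E_\v$ in $H^1_1$; by uniqueness (Proposition \ref{P5.Fond1}) we get $U_\v^R=U_\v$ for all $R\in SO(3)$, i.e. $U_\v$ is radial. Abusing notation I write $U_\v(x)=U_\v(r)$ with $r=|x|$, where $U_\v:[0,1]\to[b,1]$ by Proposition \ref{P5.Fond1}.1.

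It remains to show $U_\v'(r)\geq0$. By elliptic regularity $U_\v$ is smooth inside $\o$ and inside $\O\setminus\overline\o$, and $C^1$ across the interface $\{r=r_0\}$, so $r\mapsto r^2U_\v'(r)$ is continuous on $[0,1)$. Writing the equation of Proposition \ref{P5.Fond1}.2 radially, $(r^2U_\v')'=-\frac{r^2}{\v^2}U_\v(a^2-U_\v^2)$, I would analyse its sign on each region. On $(0,r_0)$ one has $a^2=b^2$ and $U_\v\geq b>0$, so $U_\v(b^2-U_\v^2)\leq0$ and $(r^2U_\v')'\geq0$; on $(r_0,1)$ one has $a^2=1$ and $U_\v\leq1$, so $U_\v(1-U_\v^2)\geq0$ and $(r^2U_\v')'\leq0$. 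Thus $r\mapsto r^2U_\v'$ is non-decreasing on $(0,r_0)$ and non-increasing on $(r_0,1)$.

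Finally I would pin down the sign at the endpoints. Near $r=0$ the profile is smooth, hence $U_\v'$ is bounded and $r^2U_\v'\to0$ as $r\to0^+$; being non-decreasing on $(0,r_0)$ it therefore stays $\geq0$ there. At the other end, $U_\v\leq1=U_\v(1)$ means $1$ is the maximal value, so the difference quotients give $U_\v'(1^-)\geq0$; since $r^2U_\v'$ is non-increasing on $(r_0,1)$, it is bounded below by its value at $r=1$, namely $U_\v'(1^-)\geq0$, whence $r^2U_\v'\geq0$ on $(r_0,1)$ as well. Combining the two intervals yields $U_\v'\geq0$ on $(0,1)$, i.e. $U_\v$ is non-decreasing. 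The only genuinely delicate point is the regularity across $\p\o$, where $a$ jumps: but $C^1$-matching of $U_\v$ (hence continuity of $r^2U_\v'$) at $r=r_0$ is standard for this transmission problem, and it is all the monotonicity argument needs.
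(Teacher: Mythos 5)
Your proof is correct. The paper itself does not write out a proof of this proposition: it only remarks that it follows easily "using the spherical symmetry of $\O$, $\o$ and the minimality of $U_\v$". Your symmetry argument (rotational invariance of $\O$, $\o$, $a$ and of the boundary datum $g\equiv1$, combined with the uniqueness of the minimizer from Proposition \ref{P5.Fond1}) is exactly the intended one. For the monotonicity you take a slightly different route from what the paper's hint gestures at: where the hint suggests exploiting minimality directly (e.g.\ an energy comparison or rearrangement argument), you instead use the Euler--Lagrange equation of Proposition \ref{P5.Fond1}.2 in radial form, $(r^2U_\v')'=-\frac{r^2}{\v^2}U_\v(a^2-U_\v^2)$, and read off the sign of $(r^2U_\v')'$ on each side of $r_0$ from the bounds $b\leq U_\v\leq 1$ of Proposition \ref{P5.Fond1}.1, pinning down the sign of $r^2U_\v'$ at $r=0^+$ (smoothness at the origin) and at $r=1^-$ (one-sided difference quotients, since $U_\v\leq1=U_\v(1)$). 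This is a perfectly valid and self-contained completion; it buys an explicit quantitative mechanism (concavity/convexity of $r^2U_\v'$ in each region) at the price of needing regularity of the profile, which you correctly secure: since the right-hand side of the PDE is in $L^\infty$, one has $U_\v\in W^{2,p}_{\rm loc}\subset C^{1,\alpha}_{\rm loc}$ across $\p\o$. One small observation: the $C^1$-matching across $r=r_0$, which you flag as the delicate point, is in fact not needed at all in your argument, since each of the two intervals $(0,r_0)$ and $(r_0,1)$ is handled with its own endpoint condition and the conclusion $U_\v'\geq0$ is obtained on each separately.
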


Proposition \ref{P5.StructFonctSymCas} is a particular case of the following lemma (by taking $U=U_\v$).

\begin{lem}\label{L5.Lemmedelaltere}[The dumbbell lemma]

Let $U:\R^3\to[b,1]$ be a radially symmetric and non decreasing Borel function. Fix $p,n\in \S^2$, $p=-n$ and let $M\in \O\setminus[p,n]$.

Then there are $\xi:\R^3\to\R$ and $B^+$, $B^-$ two distinct open balls, $B^+,B^-$ are exteriorly tangent and independent of $U$ s.t.
\begin{enumerate}
\item $\xi(p)-\xi(n)=d_{U^2}(p,n)$,
\item $|\n \xi| \leq U^2$,
\item $\xi\equiv0$ in $\mathcal{V}:=\R^3\setminus (B^+\cup B^-)$,
\item $M\in T$ with $T$ which is the common tangent plan of $B^+$ and $B^-$,
\item $B^+$ is centered in $2p$, $B^-$ is centered in $2n$,
\item denoting $\tilde{B}^+$ (resp. $\tilde{B}^-$) the ball centered in $2p$ (resp. $2n$) with radius $1$, $\xi$ is locally constant in  $\tilde{B}^+\cup\tilde{B}^-$,
\item $\forall t\in\xi(\R^3)\setminus\{0,\xi(p),\xi(n)\}$, $\{\xi=t\}$ is a sphere centered in $2p$ or $2n$ whose radius is at least $1$.
\end{enumerate}
\end{lem}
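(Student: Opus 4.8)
The key geometric observation is that $M \in \O \setminus [p,n]$ with $p=-n \in \S^2$, so $M$ determines a plane $T$ that must serve simultaneously as the common tangent plane of two balls centered at $2p$ and $2n$. First I would fix the radius. Since $B^+$ (centered at $2p$) and $B^-$ (centered at $2n$) are to be exteriorly tangent with the tangency happening along the plane $T$ passing through $M$, and since the two centers $2p, 2n$ are symmetric about the origin, the plane $T$ of tangency is forced to pass through the origin (it is the perpendicular bisector of the segment $[2p,2n]=2[p,n]$ only in the symmetric case; more generally the common tangent plane of two exteriorly tangent equal balls passes through their tangency point). I would choose the radius $R$ so that $T$ — the plane through $M$ perpendicular to $p$, or whichever plane through $M$ realizes the tangency — is tangent to both balls; concretely, $R = \dist(2p, T)$, and one checks $R \geq 1$ using $M \in B(0,1)$ and $p \in \S^2$, so that $|2p|=2$ gives enough room. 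Setting $\mathcal{V} := \R^3 \setminus (B^+ \cup B^-)$ then yields property 3 and 4 by construction, and 5 is immediate.

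Next I would define $\xi$ on the two balls as radial functions adapted to $U$. On $B^+$ I would set $\xi(x) = \Phi_+(|x - 2p|)$ and on $B^-$ set $\xi(x) = \Phi_-(|x-2n|)$, with $\xi \equiv 0$ on $\mathcal{V}$. The radial profiles $\Phi_\pm$ are chosen so that $\xi$ is continuous across $\p B^\pm$ (forcing $\Phi_\pm(R) = 0$) and so that $|\n \xi| = |\Phi_\pm'| \leq U^2$. Because $U$ is radially symmetric and nondecreasing (Proposition \ref{P5.DomaineSymetrique}), $U^2$ restricted to a sphere centered at the origin is constant, but along a ray from $2p$ it varies; the crucial point is to let $\Phi_\pm'(s) = -U(x)^2$ with $x$ the point at distance $s$ from the center, integrated so that $\xi(p) = \int$ accumulates exactly $d_{U^2}$ along the appropriate path. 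The level sets $\{\xi = t\}$ for $t \notin \{0, \xi(p), \xi(n)\}$ are then spheres centered at $2p$ or $2n$, and since $B^\pm$ has radius $R \geq 1$ these radii are at least $1$ (properties 6 and 7). Requiring $\xi$ to be \emph{locally constant} on the inner balls $\tilde{B}^\pm$ of radius $1$ means the profiles $\Phi_\pm$ are flat near the centers, which I can arrange by keeping $\xi$ constant on the portion of $B^\pm$ inside $\tilde B^\pm$ and only letting it vary in the annular shell between radius $1$ and $R$.

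\textbf{The heart of the matter is property 1,} namely $\xi(p)-\xi(n) = d_{U^2}(p,n)$. Here I would use that $p,n \in \S^2$ lie on $\p B^+, \p B^-$ respectively (one checks $|p - 2p| = |{-p}| = 1 = $ distance from $p$ to the center $2p$, so $p \in \tilde B^+ \subset B^+$, and similarly $n \in B^-$), and that by the symmetry $p=-n$ the segment $[p,n]$ passes through the origin. Since $U$ is radial, $d_{U^2}(p,n) = \int_{[p,n]} U^2 = 2\int_0^1 U(s)^2\,ds$, the integral being along the diameter. I would arrange the radial profiles so that $\xi(p) = \int_R^1 U(r)^2\,dr$ accumulated from $\p B^+$ inward to $p$, with the sign chosen so $\xi(p) > 0 > \xi(n)$ and the total difference equals $2\int_0^1 U^2$. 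The main obstacle I anticipate is matching the geometry exactly: ensuring that the common tangent plane through $M$ is compatible with both balls being centered at the prescribed points $2p, 2n$ with a single radius $R \geq 1$, and simultaneously that $p,n$ sit at the right depth inside the balls so that the radial integral of $U^2$ reproduces the geodesic distance $d_{U^2}(p,n)$ exactly rather than up to an error. The constraint $|\n\xi| \leq U^2$ (not just $\leq 1$) is what ties the construction to $U$ and makes property 1 an equality rather than an inequality, and verifying this simultaneously with the prescribed level-set geometry is the delicate step.
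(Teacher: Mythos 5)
Your construction is in spirit the same as the paper's (radial profiles on two exteriorly tangent balls centered at $2p$ and $2n$, $\xi\equiv0$ outside them, constant plateaus on the unit balls $\tilde B^\pm$), but one concrete step fails: you impose a \emph{single} radius $R$ for both balls. Since $|2p-2n|=4$, exterior tangency of two equal balls forces $R=2$; the tangency point is then the origin and the common tangent plane is $\{x\cdot p=0\}$, so property 4 ($M\in T$) fails for every $M$ with $M\cdot p\neq0$. The paper's construction uses \emph{unequal} radii: writing $M=(x_0,y_0,z_0)$ in coordinates where $p=(1,0,0)$, it takes $B^+=B((2,0,0),2-x_0)$ and $B^-=B((-2,0,0),2+x_0)$; the radii sum to $4$, so the balls are tangent at $(x_0,0,0)$ and the common tangent plane $\{x=x_0\}$ contains $M$, while $|x_0|<1$ guarantees both radii exceed $1$, which is what makes properties 6 and 7 compatible with the plateau on the unit balls. (Incidentally $p\in\p\tilde B^+$, not the interior, but that is harmless.)

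The second issue is the step you yourself flag as delicate and leave open: your prescription $\Phi_\pm'(s)=-U(x)^2$ ``with $x$ the point at distance $s$ from the center'' is ill-defined, because $U$ is radial about the \emph{origin} and hence not constant on spheres centered at $2p$ or $2n$. The paper's resolution is to set $\xi_0(s)=\int_{x_0}^{s}U^2(t,0,0)\,{\rm d}t$ and to define $\xi=\xi_0(2-r)$ on $\p B((2,0,0),r)$ and $\xi=\xi_0(r-2)$ on $\p B((-2,0,0),r)$. On the sphere $\Sigma_r^+=\p B((2,0,0),r)$ the point nearest the origin is $(2-r,0,0)$, so monotonicity of $U$ in $|x|$ gives $|\n\xi|=U^2(2-r,0,0)=\min_{\Sigma_r^+}U^2\leq U^2$ pointwise, and symmetrically on $\Sigma_r^-$; this is the mechanism that makes the gradient bound hold. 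The same choice settles property 1, because as $r$ ranges over the admissible intervals these nearest points sweep exactly the subsegments $[p,(x_0,0,0)]$ and $[(x_0,0,0),n]$ of the diameter, whence $\xi(p)-\xi(n)=\xi_0(1)-\xi_0(-1)=\int_{-1}^{1}U^2(t,0,0)\,{\rm d}t=d_{U^2}(p,n)$ (using, as you also implicitly do, that $[p,n]$ is the geodesic for a radial nondecreasing weight, which the paper records in the symmetric setting). So your plan is correct in outline, but to be a proof it needs the unequal radii $2\mp x_0$ (without which property 4 is false) and the min-on-the-level-sphere argument (without which properties 1 and 2 are asserted rather than proved).
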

Using the symmetry of the situation, the function $\xi$ is represented in the Figure \ref{F5.GeomOfXi}. 
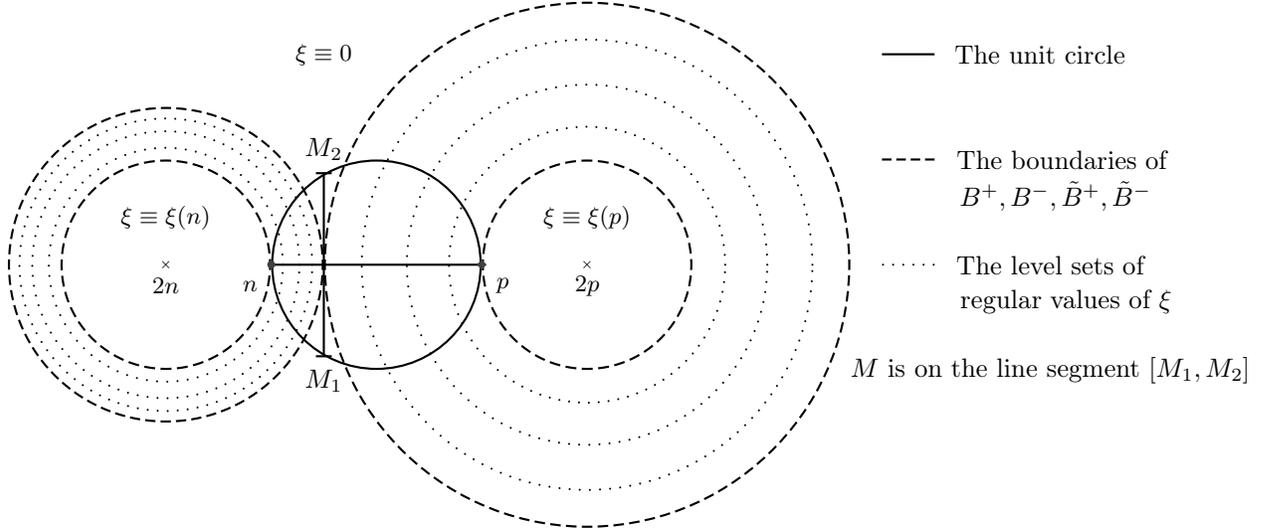
\begin{figure}[h]
\psset{xunit=1.4cm,yunit=1.4cm,algebraic=true,dotstyle=o,dotsize=3pt 0,linewidth=0.8pt,arrowsize=3pt 2,arrowinset=0.25}
\begin{pspicture*}(-3.511,-2.7)(10,2.7)
\pscircle(0,0){1.4}
\pscircle[linestyle=dashed,dash=4pt 2pt](-2,0){1.4}
\pscircle[linestyle=dashed,dash=4pt 2pt](2,0){1.4}
\psline{|-|}(-0.5,-.88)(-0.5,.88)
\rput(-.5,-1.1){$M_1$}
\rput(-.5,1.1){$M_2$}
\pscircle[linestyle=dashed,dash=4pt 2pt](-2,0){2.1}
\pscircle[linestyle=dashed,dash=4pt 2pt](2,0){3.5}
\pscircle[linestyle=dotted](2,0){3.01}
\pscircle[linestyle=dotted](2,0){2.41}
\pscircle[linestyle=dotted](2,0){1.85}
\pscircle[linestyle=dotted](-2,0){1.57}
\pscircle[linestyle=dotted](-2,0){1.96}
\pscircle[linestyle=dotted](-2,0){1.79}
\psline(-1,0)(1,0)
\psdots[dotstyle=x](-2,0)\rput(-2,-.2){\small$2n$}
\psdots[dotstyle=x](2,0)\rput(2,-.2){\small$2p$}
\psdots[dotstyle=*,linecolor=darkgray](-1,0)\rput(-1.2,-.2){\small$n$}
\psdots[dotstyle=square*,dotangle=45,linecolor=darkgray](1,0)\rput(1.2,-.2){\small$p$}
\rput(-0.5,2){\small$\xi\equiv0$}
\rput(-2.,.45){\small$\xi\equiv\xi(n)$}
\rput(2.,.45){\small$\xi\equiv\xi(p)$}
%\rput(-2,-2){\small $M$ is on this segment}
%\psline{->}(-1.5,-1.8)(-0.55,-0.9)
\psline(4.8,2)(5.3,2)\rput(6.3,2){The unit circle}
\psline[linestyle=dashed,dash=4pt 2pt](4.8,1)(5.3,1)\rput(6.55,1){The boundaries of }\rput(6.45,0.65){$B^+,B^-,\tilde{B}^+,\tilde{B}^-$}
\psline[linestyle=dotted](4.8,0)(5.3,0)\rput(6.4,0){The level sets of}\rput(6.55,-.35){regular values of $\xi$}
\rput(6.4,-1){$M$ is on the line segment $[M_1,M_2]$}
\end{pspicture*}\caption[The geometry of the level sets of $\xi$]{The geometry of the level sets of $\xi$ (intersected with the plane defined by $p,n,M$)}\label{F5.GeomOfXi}
\end{figure}
\begin{proof} Let $p,n\in\p \O$, $p=-n$ and $\{0,(e_1,e_2,e_3)\}$ be an orthonormal and a direct coordinate system of $\R^3$ s.t. $p=(1,0,0)$ and $n=(-1,0,0)$. Let $M(x_0,y_0,z_0)\in\O\setminus[p,n]$.\\
{\bf Step 1:}  $\xi_0:[-1,1]\to\R$ s.t. $\xi_0(1)-\xi_0(-1)=d_{U^2}(p,n)$, ${\xi_0}^\prime(s)=U^2(s,0,0)$ and $\xi_0 (x_0)=0$

It suffices to consider $\displaystyle {\xi_0}(s)=\int_{x_0}^s{U^2(t,0,0)\di t}$.%\text{ (en prolongeant $U$ par $1$ en dehors de $\O$)}
\\{\bf Step 2:} We construct $\xi:\R^3\to\R$

We denote
\[
\Sigma_r^+=\p B((2,0,0),r)\text{ for $r\in(1,2-x_0)$}
\]
and
\[
\Sigma_r^-=\p B((-2,0,0),r)\text{ for $r\in(1,2+x_0)$}.
\]
We define $\xi:\R^3\to\R$ by its level sets:
\[
\xi=
\begin{cases}
\xi_0(2-r)&\text{on }\Sigma_r^+, \,r\in(1,2-x_0)\\\xi_0(r-2)&\text{on }\Sigma_r^-,\,r\in(1,2+x_0)\\\xi_0(-1)&\text{in }\overline{B((-2,0,0),1)}\\\xi_0(1)&\text{in }\overline{B((2,0,0),1)}\\0&\text{otherwise}
\end{cases}.
\]
{\bf Step 3: } $\xi$ satisfies the properties of Lemma \ref{L5.Lemmedelaltere}

Assertion {\it1.} is easily satisfied since $\xi(p)=\xi_0(1)$, $\xi(n)=\xi_0(-1)$ and $\xi_0(1)-\xi_0(-1)=d_{U^2}(p,n)$.

We take $B^+=B((2,0,0),2-x_0)$ and $B^-=B((-2,0,0),2+x_0)$. 

Clearly Assertions {\it3., 4., 5., 6.} and {\it 7.} hold.

We check  {\it2.}.  Since $\xi$ is locally constant in $V:=[\R^3\setminus (B^+\cup B^-)]\cup \overline{\tilde{B}^+\cup\tilde{B}^-}$, it suffices to prove that $|\n\xi|\leq U^2$ in $\R^3\setminus V$. 

The key argument is the fact that for $Q,Q'\in\R^3$, $Q\neq Q'$ and $0<r<|Q-Q'|$ we have $\dist(Q,\p B(Q',r))=|Q-Q'|-r=|Q-Q_0|$ where $[Q,Q']\cap\p  B(Q',r)=\{Q_0\}$. This is obvious if we draw a picture and may be easily justified. Indeed, if $Q_0$ is a minimal point, then the line segment $[Q,Q_0]$ is orthogonal to $\p B(Q',r)$. Only two points on $\p B(Q',r)$ satisfy this condition and  one of them is clearly not minimal.

Consequently, by  taking $Q=0$ and $Q'\in\{2p,2n\}$ we have that 
\[
\min_{Q_0\in \Sigma_r^\pm}|Q_0|=|(\pm(2-r),0,0)|.
\]

Note that $U$ is radially symmetric and non decreasing. Since in each connected components of
\[
(B^+\cup B^-)\setminus \overline{\tilde{B}^+\cup\tilde{B}^-},
\] 
$\xi$ admits a spherical symmetry, we have 
\begin{eqnarray*}
|\n \xi(x)|&=&\begin{cases}
|{\xi_0}'(2-r)|=U^2(2-r,0,0)=\min_{\Sigma^+_{r}}U^2&\text{if }x\in\Sigma_r^+\\|{\xi_0}'(r-2)|=U^2(r-2,0,0)=\min_{\Sigma^-_{r}}U^2&\text{if }x\in\Sigma_r^-
\end{cases}
\\&\leq& U^2(x).
\end{eqnarray*}
%Clairement si $t\in\xi(\R^3)\setminus\{0,\xi(p),\xi(n)\}$, la seconde forme fondamentale de $\{\xi=t\}$ est bornée par $1$. 
\end{proof}

\section{Lower bound for ${\rm inf}_{H^1_g}F_\v$ when $g\in\mathcal{H}$: the argument of Sandier}\label{S5.LowerBoundSandier}
As briefly explain in Section \ref{S5.StructureFunction}, we compute a lower bound { via} the Coarea formula: we integrate on the level sets of $\xi:\R^3\to\R$ (a structure function).

Thus the obtention of a lower bound for $F_\v(v_\v)$ is related with lower bounds on hypersurfaces. On hypersurfaces, the main ingredient to get the desired estimate is Proposition 3.5 in \cite{Sandier1}. For the convenience of the reader, we recall this result.
\begin{prop}\label{P5.lemmedeSandier}
Let $\tilde{\Sigma}$ be a (smooth) closed and oriented hypersurface in $\R^3$ whose second fundamental form is bounded by $K$. We denote by $d(\cdot,\cdot)$ the Euclidean distance restricted to $\tilde{\Sigma}$.

Consider $\Sigma\subset\tilde{\Sigma}$, a (smooth) bounded open set and $v:\Sigma\to\C$ s.t. there is  $0<\beta<1$ satisfying
\[
\dist(x,\p\Sigma)<\beta\Rightarrow|v(x)|\geq1/2.
\]
Then we have the existence of  $C>0$ depending only on  $K$ and $\deg(v,\p\Sigma)$ s.t.
\[
\frac{1}{2}\int_{\Sigma}{\left\{|\n v|^2+\frac{1}{2\v^2}(1-|v|^2)^2\right\}}\geq \pi|\deg(v,\p\Sigma)|\ln\frac{\beta}{\v}-C.
\]
\end{prop}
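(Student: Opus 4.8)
The plan is to prove this as a two–dimensional Ginzburg–Landau lower bound on the Riemannian surface $\tilde\Sigma$, in the spirit of \cite{Sandier1}, via a vortex–ball construction. The only role of the hypothesis that the second fundamental form is bounded by $K$ is to make the intrinsic geometry comparable to the Euclidean one at \emph{all} relevant scales. Writing $\tilde\Sigma$ locally as a graph over its tangent plane, the bound $|\mathrm{II}|\leq K$ gives that the induced metric differs from the flat metric by $O(K^2t^2)$ at distance $t$ from a point; since
\[
\int_\v^\beta \frac{O(K^2t^2)}{t}\,dt = O(K^2\beta^2) = O(K^2) \quad (\beta<1),
\]
the accumulated metric distortion over the whole logarithmic range $\v<t<\beta$ is a bounded additive error, which is absorbed into the constant $C=C(K,\deg(v,\p\Sigma))$. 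This is exactly what lets me transport the flat estimates below to the curved surface with no loss in the leading coefficient.

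First I would record the sharp single–annulus estimate. On a (geodesic) circle of radius $t$ on which $|v|\geq 1/2$ and the degree is $d$, write $v=\rho\,e^{\imath\psi}$ and use Cauchy--Schwarz: since $\int_{\p B_t}\p_\tau\psi\,ds = 2\pi d$,
\[
\int_{\p B_t}\rho^2|\p_\tau\psi|^2\,ds \;\geq\; \frac{(2\pi d)^2}{\int_{\p B_t}\rho^{-2}\,ds}.
\]
When $\rho\equiv 1$ this equals $2\pi d^2/t$, and radial integration gives the sharp coefficient, $\tfrac12\int_A|\n v|^2\geq \pi d^2\ln(s'/s)$ on a geodesic annulus $A$ of radii $s<s'$. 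The deficit incurred where $\rho$ is bounded away from $1$ (which inflates $\int\rho^{-2}$) is precisely compensated by the potential term; balancing the two yields
\[
\tfrac12\int_A\left\{|\n v|^2 + \frac{1}{2\v^2}(1-|v|^2)^2\right\} \;\geq\; \pi d^2\ln\frac{s'}{s} - C.
\]

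To pass to an arbitrary configuration of zeros and to replace $d^2$ by $|\deg(v,\p\Sigma)|$, I would run the ball construction. We may assume the total energy is at most $\pi|\deg(v,\p\Sigma)|\ln(\beta/\v)$, for otherwise the claimed bound holds trivially; under this assumption $\{|v|\leq 1/2\}$ is covered by finitely many geodesic balls of total radius $r_0$, with $r_0/\v$ bounded. One then lets these balls grow, keeping them disjoint and merging (and adding degrees) whenever two become tangent. The annulus estimate applies on every growing shell, so that if $D(t)=\sum_i|d_i(t)|$ denotes the total absolute degree at total radius $t$, the energy satisfies $\tfrac{d}{d(\ln t)}E \geq \pi D(t)$. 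Since $\{|v|\leq 1/2\}$ stays inside the balls, the total degree $\sum_i d_i(t)=\deg(v,\p\Sigma)$ is conserved, whence $D(t)\geq|\deg(v,\p\Sigma)|$; moreover, as $|v|\geq 1/2$ within distance $\beta$ of $\p\Sigma$, the balls may be grown to total radius $\sim\beta$ without ever reaching $\p\Sigma$. Integrating from $\ln r_0$ to $\ln\beta$ yields $\tfrac12\int_\Sigma\{\cdots\}\geq \pi|\deg(v,\p\Sigma)|\ln(\beta/\v)-C$.

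The main obstacle is twofold, and both difficulties are analytic rather than combinatorial. The first is extracting the \emph{sharp} coefficient $\pi$: the crude bound $\rho\geq 1/2$ only gives $\pi/4$, and recovering the full $\pi$ forces one to trade the Dirichlet deficit on $\{\rho<1\}$ against the potential energy there — this is the single place where the structure of the energy, and not merely $|v|\geq 1/2$, is used. The second is the \emph{uniformity} of the annulus estimate over all of $\tilde\Sigma$ and all scales: the deviation of geodesic circles from Euclidean circles and the metric distortion must be funneled into the single constant $C(K,\deg)$, which is where the bound on the second fundamental form does its work through the integrated estimate of the first paragraph.
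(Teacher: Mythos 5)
The paper does not actually prove this proposition: it is recalled verbatim from \cite{Sandier1} (Proposition 3.5 there), the text saying so explicitly, so the only meaningful comparison is with Sandier's original argument. Your sketch reconstructs that argument in its correct architecture: the bound on the second fundamental form is indeed used only to make the intrinsic geometry of $\tilde\Sigma$ uniformly comparable to the flat one at small scales (graph representation over tangent disks of radius $c/K$, which also bounds the injectivity radius from below), the sharp circle estimate with the potential term compensating the deficit where $\rho<1$ is the correct local ingredient, and the growth-and-merging ball construction with $\sum_i d_i(t)=\deg(v,\p\Sigma)$ conserved, hence $D(t)\geq|\deg(v,\p\Sigma)|$, is how the global bound is assembled. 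Two of your shortcuts are harmless modulo constants, which is all the statement requires: the balls must be stopped at a fixed fraction of $\min(\beta,c/K)$, so that balls containing points of $\{|v|\leq1/2\}$ (which lie at distance $\geq\beta$ from $\p\Sigma$) never reach $\p\Sigma$ and so that geodesic balls remain well behaved; each truncation costs only an additive constant depending on $K$ and the degree.

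One step, however, is asserted too quickly and, as written, does not deliver the stated $O(1)$ error. From your preliminary reduction $E\leq\pi|\deg(v,\p\Sigma)|\ln(\beta/\v)$, the standard coarea/Chebyshev covering (pick $s\in(1/2,3/4)$ with $\mathscr{H}^1(\{|v|=s\})$ controlled via $\int_{\{1/2<|v|<3/4\}}\left|\n|v|\right|\leq C\v E$, using the potential bound on the area of $\{1/2<|v|<3/4\}$) only yields initial balls of total radius $r_0\leq C\v|\ln\v|$, not $r_0\leq C\v$ as you claim. Growing from $r_0$ to scale $\beta$ then produces $\pi|\deg(v,\p\Sigma)|\left(\ln\frac{\beta}{\v}-\ln|\ln\v|\right)-C$, which misses the proposition by the unbounded quantity $\pi|\deg(v,\p\Sigma)|\ln|\ln\v|$. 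To start the growth genuinely at scale $\v$ you need a quantitative input you did not state: the one-dimensional crossing estimate, namely that any circle of radius $t$ meeting both $\{|v|\leq1/2\}$ and $\{|v|\geq3/4\}$ carries energy at least $c/\v$ (and a circle on which $|v|\leq1/2$ everywhere carries potential energy at least $ct/\v^2$), which, integrated in $t$, shows that each component of the vorticity set already pays $\pi|d|\left(\ln\frac{t}{\v}-C\right)$ at every radius $t\geq C\v$ — this is Jerrard's lower bound, and Sandier's construction incorporates the same mechanism. With that ingredient supplied, your scheme closes and coincides with the proof in \cite{Sandier1}.
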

This section is devoted to the proof of the following proposition.
\begin{prop}\label{P5.BorneInfLeCasGénéral}
Let $g\in\mathcal{H}$ and let $\cl=\Pos\cup \Neg$ be the set of its singularities. 
\begin{enumerate}[1)]
\item We have
\begin{equation}\label{5.BorneLimInfBis}
\liminf_{\v\to0}{\frac{F_{\v}(v_\v)}{|\ln\v|}}\geq\pi L(g,d_{a^2}).
\end{equation}
\item We denote by $<\Gamma>$ the union of all geodesic links of $\cl$ in $(\R^3,d_{a^2})$ and for $\mu>0$, $K_\mu:=\{x\in\O\,|\,\dist(x,<\Gamma>)\geq\mu\}$. Then we have 
\begin{equation}\label{5.BorneLimInfBisCompact}
\liminf_{\v\to0}{\frac{F_{\v}(v_\v,\O\setminus K_\mu)}{|\ln\v|}}\geq\pi L(g,d_{a^2}).
\end{equation}
\item Assume that there exists a unique geodesic link $\cup\Gamma_i$ joining the singularities of $g$. Then for $x_0\in\cup_i\Gamma_i\setminus\cl$ there exists $r_{x_0}>0$ s.t. letting $K=\overline{B(x_0,r_{x_0})}$ we have
\begin{equation}\label{5.BorneLimInfBisCompactAlelkqsjdf}
\liminf_{\v\to0}{\frac{F_{\v}(v_\v,\O\setminus K)}{|\ln\v|}}\geq\pi \left[L(g,d_{a^2})-a^2\H^1_{|\cup\Gamma_i}(K)\right].
\end{equation}

\item Moreover, if we are in the symmetric case of Section \ref{S5.SymCase}, then we have $<\Gamma>=[p,n]$ and  there is $C_\mu>0$ s.t.
\begin{equation}\label{5.BorneLimInfSymCase}
F_{\v}(v_\v,\O\setminus K_\mu)\geq\pi d_{a^2}(p,n)|\ln\v|-C_\mu.
\end{equation}
\end{enumerate}
\end{prop}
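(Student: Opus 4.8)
The plan is to run Sandier's slicing argument, taking the structure functions of Propositions \ref{C5.Ustruc}, \ref{C5.StructureFunctionCompact}, \ref{C5.StructureFunctionCompactBisBis} and \ref{P5.StructFonctSymCas} as foliating functions and Proposition \ref{P5.lemmedeSandier} as the per-slice estimate. For 1), fix $\eta>0$ and let $\xi=\xi_\eta$ be the structure function of Proposition \ref{C5.Ustruc}. First I would keep only the tangential gradient, $|\n v_\v|^2\geq|\n_T v_\v|^2$ along the level sets of $\xi$, and apply the Coarea formula with respect to $\xi$:
\[
F_\v(v_\v)\geq\frac12\int_\R\left(\int_{\{\xi=t\}\cap\O}\frac{U_\v^2|\n_T v_\v|^2+\frac{U_\v^4}{2\v^2}(1-|v_\v|^2)^2}{|\n\xi|}\,\di\mathscr{H}^2\right)\di t.
\]
On each regular slice $\Sigma_t=\{\xi=t\}$ (i.e.\ $t\notin E_\eta$), property~1 of $\xi$, namely $|\n\xi|\leq U_\v^2+\v^4$, together with $U_\v\geq b$ gives the kinetic weight $U_\v^2/|\n\xi|\geq1-o_\v(1)$; factoring it out leaves the bracket $|\n_T v_\v|^2+\frac{U_\v^2}{2\v^2}(1-|v_\v|^2)^2\geq|\n_T v_\v|^2+\frac{1}{2(\v/b)^2}(1-|v_\v|^2)^2$, which is the standard Ginzburg-Landau density at scale $\v/b$. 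Since $v_\v=g$ on $\p\O$ with $|g|=1$, Proposition \ref{P5.lemmedeSandier} applied on $\Sigma_t$ (whose second fundamental form is bounded by $C_\eta$) yields, for a.e.\ $t$,
\[
\frac12\int_{\Sigma_t\cap\O}\left\{|\n_T v_\v|^2+\frac{1}{2(\v/b)^2}(1-|v_\v|^2)^2\right\}\di\mathscr{H}^2\geq\pi|\deg(g,\Sigma_t\cap\p\O)|\,\ln\frac{\beta b}{\v}-C,
\]
and since $\ln\frac{\beta b}{\v}=|\ln\v|+O(1)$ the loss coming from the weights is only an additive $O(1)$ per slice.

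The combinatorial heart is to integrate the slice degrees in $t$. Writing $D(t)=\deg(g,\{\xi>t\}\cap\p\O)=\Card\{p\in\Pos\,:\,\xi(p)>t\}-\Card\{n\in\Neg\,:\,\xi(n)>t\}$ (via \eqref{5.DefDegOuvert}), the function $D$ is compactly supported in $t$ --- it vanishes outside the range of $\xi$ on $\overline{\O}$, using $\Card\Pos=\Card\Neg$ --- and a direct computation telescopes $\int_\R D(t)\,\di t$ to $\sum_i\{\xi(p_i)-\xi(n_i)\}$. Hence, by property~2 of $\xi$,
\[
\int_\R|D(t)|\,\di t\geq\int_\R D(t)\,\di t=\sum_i\{\xi(p_i)-\xi(n_i)\}\geq L(\cl,d_{a^2})-\eta.
\]
Integrating the per-slice estimate over the bounded $t$-range (the exceptional set $E_\eta$ contributes at most $k\,\mathscr{H}^1(E_\eta)\leq k\eta$ to $\int|D|$, and the constants $C$ integrate to an $\eta$-dependent $O(1)$) gives $F_\v(v_\v)\geq\pi(L(\cl,d_{a^2})-O(\eta))|\ln\v|-O_\eta(1)$. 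Dividing by $|\ln\v|$, taking $\liminf$, then letting $\eta\to0$ and using $L(\cl,d_{a^2})=L(g,d_{a^2})$ (Proposition \ref{P5.BBM1AuxProp}.5) proves \eqref{5.BorneLimInfBis}.

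For 2) and 3) I would run the same slicing with the structure functions of Propositions \ref{C5.StructureFunctionCompact} and \ref{C5.StructureFunctionCompactBisBis}, which are constant on $K=\overline{B(x_0,r_{x_0})}$; constancy forces $|\n\xi|\equiv0$ on $K$, so $K$ carries no slice and the Coarea integral runs over $\O\setminus K$. The only change is in the value of $\sum_i\{\xi(p_i)-\xi(n_i)\}$, which is now $\geq L(\cl,d_{a^2})-a^2\mathscr{H}^1_{|\cup\Gamma_i}(K)-\eta$ in the case of Proposition \ref{C5.StructureFunctionCompactBisBis}; this gives \eqref{5.BorneLimInfBisCompactAlelkqsjdf} at once. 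To obtain \eqref{5.BorneLimInfBisCompact} I would cover the compact set $K_\mu$ by finitely many balls $B(x_j,r_j)$ disjoint from $<\Gamma>$, use Proposition \ref{C5.StructureFunctionCompact} together with the upper bound \eqref{5.UpperBoundGeneralCase} to get $F_\v(v_\v,B(x_j,r_j))=o(|\ln\v|)$ on each, sum to $F_\v(v_\v,K_\mu)=o(|\ln\v|)$, and subtract from 1). For 4), in the radial case the structure function $\xi_\v$ of Proposition \ref{P5.StructFonctSymCas} enjoys the \emph{exact} bounds $|\n\xi_\v|\leq U_\v^2$ and $\xi_\v(p)-\xi_\v(n)=d_{U_\v^2}(p,n)$, and has spherical level sets of radius $\geq1$ (a curvature bound $1$ that is uniform in $\v$ and free of $\eta$); the same slicing, now with a genuinely uniform per-slice constant, yields $F_\v(v_\v,\O\setminus K_\mu)\geq\pi d_{U_\v^2}(p,n)|\ln\v|-C_\mu$, and Proposition \ref{P5.Fond1}.4 gives $|d_{U_\v^2}(p,n)-d_{a^2}(p,n)|=\mathcal{O}(\v)$, so replacing $d_{U_\v^2}$ by $d_{a^2}$ costs only $\mathcal{O}(\v|\ln\v|)$ and establishes \eqref{5.BorneLimInfSymCase}.

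The main obstacle is the hypothesis of Proposition \ref{P5.lemmedeSandier} that $|v_\v|\geq1/2$ in a boundary layer of $\Sigma_t\cap\O$ and that $\deg(g,\Sigma_t\cap\p\O)$ be well defined: one must ensure that for a.e.\ $t$ the curve $\Sigma_t\cap\p\O$ misses the finitely many singularities of $g$ and that $|v_\v|$ stays close to $1$ near $\p\O$, a boundary estimate for minimizers with prepared data. The second delicate point, specific to the discontinuous pinning, is the weight bookkeeping: one must check that lower bounding $U_\v^2,U_\v^4$ by constants (and the resulting $\ln b$ loss, and the $\v^4$ term in $|\n\xi|$) costs only $O(1)$ per slice and never accumulates to spoil the leading $\pi L(\cl,d_{a^2})|\ln\v|$ --- precisely the reason the $\v$-dependent radial structure function is needed in the symmetric case to sharpen the error from $o(|\ln\v|)$ to $\mathcal{O}(1)$.
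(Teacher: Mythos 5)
Your overall architecture coincides with the paper's: coarea slicing along the level sets of the structure functions of Propositions \ref{C5.Ustruc}, \ref{C5.StructureFunctionCompact}, \ref{C5.StructureFunctionCompactBisBis}, \ref{P5.StructFonctSymCas}, the per-slice estimate of Proposition \ref{P5.lemmedeSandier}, and the telescoping identity $\int_\R D(t)\,{\rm d}t=\sum_i\{\xi(p_i)-\xi(n_i)\}$ against property \emph{2} of $\xi$ (this is the paper's Lemma \ref{L5.intdeg}, up to $\pm\rho$ shifts). Your variations are legitimate: for \eqref{5.BorneLimInfBisCompact} the paper proves the lower bound on $\O\setminus K$ directly and then \emph{deduces} $F_\v(v_\v,K)=o(|\ln\v|)$, whereas you establish \eqref{5.ConcentrationEnergySmallComp} on a finite ball cover of $K_\mu$ first and subtract — equivalent bookkeeping; and in part 4) the paper keeps $d_{U_\v^2}(p,n)$ implicit while you convert to $d_{a^2}(p,n)$ (your $\mathcal{O}(\v)$ claim for the difference is optimistic, since a competitor curve may linger in the boundary layer where $U_\v^2$ dips, but using $U_\v^2\geq(1-\delta)\alpha_{w(\v)}$ with $w(\v)=\mathcal{O}(\v|\ln\v|)$ and \eqref{5.EstiPourBougIncl} the error is still $o(1/|\ln\v|)$, so the conclusion stands). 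Also, as written your coarea step divides by $|\n\xi|$, which vanishes on open sets; the paper avoids this by first bounding $U_\v^2\geq|\n\xi_\eta|-\v_n^4$ and $U_\v^4\geq b^2U_\v^2$ and then applying coarea to $\int_\O(|\n\xi_\eta|-\v_n^4)[\,\cdots]$ — cosmetic, but worth fixing.

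The genuine gap is the point you yourself flag as "the main obstacle" and then propose to settle by "a boundary estimate for minimizers with prepared data", i.e.\ that $|v_\v|$ stays close to $1$ near $\p\O$. No such estimate is available here — the paper explicitly emphasizes that $\eta$-ellipticity/clearing-out fails for $F_\v$ because of the oscillation of $U_\v$, and near the singularities of $g$ the modulus of $v_\v$ genuinely degenerates (the vorticity lines terminate at the points of $\cl$). The paper's device is entirely different and requires no regularity of $v_\v$ at $\p\O$: extend $v_n$ outside $\O$ by the $\v$-independent $\S^1$-valued map $g\circ\Pi_{\p\O}$ on $\O_\rho=\{x\,|\,\dist(x,\O)<\rho\text{ and }\dist(x,\cl)>\rho\}$, which has finite energy cost $C_0(\rho)$ precisely because the $\rho$-balls around $\cl$ are excised ($g$ is smooth away from $\cl$, giving \eqref{LowerBoundOutsideBis}); delete the $t$-values with $\dist(t,\xi_\eta(\cl))<2\rho$, so that, $\xi_\eta$ being $1$-Lipschitz, the remaining slices stay at distance $\geq2\rho$ from $\cl$; and cut each level set, via Sard's lemma and the constant rank theorem applied to $f(x)=(\xi_\eta(x),\dist[x,\p(\O+B(0,\rho))])$, to a compact manifold-with-boundary $\Sigma_t$ whose boundary satisfies \eqref{L5.BordHypersurfaceLoin}, i.e.\ lies in the extension region where $|v_n|\equiv1$ \emph{identically}. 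The hypothesis of Proposition \ref{P5.lemmedeSandier} then holds with $\beta\sim\rho$ by construction, and the degree $d(n,t)$ is that of the fixed extension, computed by counting the projected singularities $M^t$. Without this extension-and-cutting step your per-slice estimate \eqref{5.ApplicationLemmeSandier} is unjustified, and the substitute you suggest would amount to an unproven (and, in this pinned setting, likely false near $\cl$) clearing-out theorem.
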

Theorem \ref{T5.Main1} for $g\in\mathcal{H}$, as well as Theorems  \ref{T5.Main2}$\&$\ref{T5.MainSymCase}, are straightforward consequences of Proposition \ref{P5.BorneInfLeCasGénéral} combined with the upper bounds \eqref{5.UpperBoundGeodesicareLines}, \eqref{5.UpperBoundGeneralCase}.%we may easily deduce Theorem \ref{T5.Main1} for $g\in\mathcal{H}$ and  Theorem \ref{T5.Main2}, \ref{T5.MainSymCase}.

We prove in detail \eqref{5.BorneLimInfBis}, and we will sketch the proofs of \eqref{5.BorneLimInfBisCompact},\eqref{5.BorneLimInfSymCase} which are, as explained in \cite{Sandier1}, obtained exactly in the same way as \eqref{5.BorneLimInfBis}.

We prove that for all $\tilde{\eta}:=\eta(8k^2+3k+1)>0$, the following holds
\begin{equation}\label{5.BorneLimInf}
\liminf_{\v\to0}{\frac{F_{\v}(v_\v)}{|\ln\v|}}\geq\pi L(g,d_{a^2})-\tilde{\eta}.
\end{equation}
Let $\eta>0$, $\v_n\downarrow0$, let $(v_n)_n\subset H^1_g$ be a sequence of minimizers of $F_{\v_n}$ in $H^1_g$ and let $\xi_\eta,C_\eta,E_\eta$ be given by Proposition \ref{C5.Ustruc} (for $n$ sufficiently large). 

%Up to subsequence, one may assume that 
%\begin{equation}\label{5.BorneSupHypEnergy}
%F_{\v_n}(v_n)\leq\v_n^{-2}
%\end{equation}
%since in the contrary case the result is proved.

Let $0<\rho<10^{-2}\min\{\eta,\v_\eta\}$ ($\v_\eta$ defined in Proposition \ref{C5.Ustruc}) and set
\[
\O_\rho:=\{x\in\R^3\,|\,\dist(x,\O)<\rho\text{ and }\dist(x,\cl)>\rho\}.
\]
One may assume that $\rho$ is sufficiently small s.t. in $\O_\rho\setminus\O$, $\Pi_{\p\O}$, the orthogonal projection on $\p\O$, is well defined and smooth.

Then we extend $v_n$ (we use the same notation for the extension) by letting 
\[
v_n:\O_\rho\to\R^2,\,x\mapsto\begin{cases}v_n(x)&\text{if }x\in\O\\g(\Pi_{\p\O}(x))&\text{if }x\in\O_\rho\setminus\O\end{cases}.
\] 
Since $g\in\mathcal{H}$ and $v_{n|\O_\rho\setminus\O}$ does not depend on $n$ and takes its values in $\S^1$, we obtain the existence of $C_0(\rho)$ depending only on $\rho,\O,g$ s.t. (for small $\rho$)
\begin{equation}\label{LowerBoundOutsideBis}
F_{\v_n}(v_n,\O)\geq F_{\v_n}(v_n,\O_\rho)-C_0(\rho)
\end{equation}
If we define $F=F_{\eta,\rho}:=E_\eta\cup[\xi_\eta(\cl)-2\rho,\xi_\eta(\cl)+2\rho]$ ($\xi_\eta,E_\eta$ given by Proposition \ref{C5.Ustruc}), then we have % in such a way that
\[
\mathscr{H}^1(F)\leq8k\rho+\eta\leq(8k+1)\eta.
\]

If $t\in\R\setminus F$, we denote $\tilde{\Sigma}_t:=\{\xi_\eta=t\}$. We construct for almost all $t\in\R\setminus F$ a smooth closed submanifold $\Sigma_t\subset \tilde{\Sigma}_t$. 

Note that for $t\in\R\setminus F$, we have $\dist(t,\xi_\eta(\cl))\geq2\rho$ (by definition of $F$). Consequently, for $t\in\R\setminus F$, we obtain that $\tilde\Sigma_t\cap \left\{\O+B(0,\rho)\right\}=\tilde\Sigma_t\cap \O_\rho$ (because $\xi_\eta$ is $1$-Lipschitz).% with boundary. Its boundary will be denoted by $\p\Sigma_t$ and will be include in $\O_\rho\setminus\{\O+ B(0,\rho/2)\}$.

% and by $\Sigma_t=\{\xi_\eta=t\}\cap\O_\rho$. 

Since $t\in\R\setminus F$ is not a critical value of $\xi_\eta$, the connected components $W$'s of $\tilde\Sigma_t=\p\{\xi_\eta\geq t\}=\{\xi_\eta=t\}$ have no boundary. If such $W$ intersects $\O_\rho$, then we distinguish two cases:%One may distinguish two distinct kinds of connected components of $\tilde\Sigma_t=\p\{\xi_\eta\geq t\}$ s.t. $\O_\rho\cap W\neq\emptyset$:
\begin{enumerate}[a)]
\item $W\cap\p\O_\rho=\emptyset$
\item $W\cap\p\O_\rho\neq \emptyset$.
\end{enumerate}
Denote by $W_a$, resp. $W_b$, the set of the connected components satisfying a), resp. b).% of the first kind and $W_b$ the others. 

If $W_b=\emptyset$, then we define $\Sigma_t=\tilde\Sigma_t\cap \O_\rho=\{\xi_\eta=t\}\cap \O_\rho$. 

Thus it remains to construct $\Sigma_t$ when $W_b\neq\emptyset$. Consider
\[
\begin{array}{cccc}
f:&\O+B(0,\rho)&\to&\R^2\\&x&\mapsto&\left(\xi_\eta(x),\dist[x,\p(\O+B(0,\rho))]\right)
\end{array}.
\]

Using the Constant Rank Theorem (see Theorem 4.3.2, page 91 in \cite{Demazure1}), the set $f^{-1}(\{t\}\times[r,\infty))$ ($r\in(0,\rho/2)$) is a manifold with boundary when
\begin{enumerate}[$\bullet$]
\item $t$ is a regular value of $\xi_\eta$,
\item $(t,r)$ is a regular value of $f$.
\end{enumerate}
Thus, using Sard's Lemma, for almost all $t\in\R\setminus F$ s.t. $W_b\neq\emptyset$, there is $r=r(t)\in(0,\rho/2)$ s.t. 
\begin{equation}\label{Equa.r(t)}
\text{$\Sigma_t=f^{-1}(\{t\}\times[r,\infty))\subset\tilde{\Sigma}_t$ is a closed submanifold with boundary.}
\end{equation}
Moreover, we have $\p\Sigma_t\subset \p\{\O+B(0,\rho-r)\}\cap\O_\rho$.

We denote by $G$ the set 
\[
G:=\{t\in\R\setminus F\,|\,W_b=\emptyset\text{ or }W_b\neq\emptyset\text{ and there is $r(t)\in(0,\rho/2)$ s.t. \eqref{Equa.r(t)} holds}\}.
\] %Note that we have $\mathscr{H}^1(\R\setminus (F\cup G))=0$.

For $t\in G$ we have
\begin{equation}\label{L5.BordHypersurfaceLoin}
\dist(\p\Sigma_t,\O)\geq\rho/2.
\end{equation}
%In the situation where $W\in W_b$
%\[
%\p\left(\bigcup_{W\in W_b}W\cap\O_\rho\right)=\bigcup_{W\in W_b}W\cap\p\O_\rho
%\]
%Consequently,
%\begin{equation}\label{5.BordHyperSurf}
%\p\Sigma_t=\p[\{\xi_\eta=t\}\cap\O_\rho]=\{\xi_\eta=t\}\cap\p\O_\rho=\p\left[\{\xi_\eta\geq t\}\cap\p\O_\rho\right].
%\end{equation}
%In order to apply Proposition \ref{P5.lemmedeSandier} we need
%\begin{lem}\label{L5.BordHypersurfaceLoin}
%For almost all $t\in\R\setminus F$ s.t. $\p\Sigma_t\neq\emptyset$ we have $\dist(\p\Sigma_t,\O)\geq\rho/2$.
%\end{lem}
%\begin{proof}
%Let $t\in\R\setminus F$ s.t. $\p\Sigma_t\neq\emptyset$ and let $x\in\p\Sigma_t$. 

%On the one hand, since $t\notin F$, $\dist(\xi_\eta(x),\xi_\eta(\cl))>2\rho$. It follows that $\dist(x,\cl)>2\rho$ (since $\xi_\eta$ is $1$-Lipschitz).

%On the other hand, by construction, we have $\p\Sigma_t\subset\p\{\O+B(0,\rho-r(t))\}$. 

%So in order to conclude, it suffices to note that $\p\O_\rho=\p(\O+B(0,\rho))\cup\bigcup_{M\in\cl}C(M,\rho)$.

%Indeed, since $x\notin \cup_{M}C(M,\rho)$,  we have $x\in\p(\O+B(0,\rho))$ and consequently $\dist(x,\O)=\rho$.

%\end{proof}
Let $x\in\Sigma_t$ be s.t. $\dist(x,\p\Sigma_t)<\rho/2$. Using \eqref{L5.BordHypersurfaceLoin}, we have $x\in\O_\rho\setminus\O$ and therefore $|v_n(x)|=1$. 

Finally, we are in a position to apply Proposition \ref{P5.lemmedeSandier}: 
\begin{equation}\label{5.ApplicationLemmeSandier}
\frac{1}{2}\int_{\Sigma_t}{\left\{|\n v_n|^2+\frac{b^2}{2\v^2_n}(1-|v_n|^2)^2\right\}}\geq\pi|\deg(v_n,\p\Sigma_t)|\ln\frac{b\rho}{\v_n}-C(\deg(v_n,\p\Sigma_t)).
\end{equation}
For $M\in\cl$ and for $t\in G$ we denote $M^{t}\in\p(\O+B(\rho-r(t))$ s.t. $\Pi_{\p\O}(M^t)=M$. Here we set $r(t)=0$ when $W_b=\emptyset$, \emph{i.e.}, when $\Sigma_t=\tilde{\Sigma}_t\cap\O_\rho$. It is clear that $M^t$ is uniquely defined.

Since $d(n,t)=\deg(v_n,\p\Sigma_t)={\rm Card}(\{p_i^t\in\{\xi_\eta\geq t\}\})-\Card(\{n_i^t\in\{\xi_\eta\geq t\}\})$ takes at most $2k$ values, one may assume that $C(\deg(v_n,\p\Sigma_t))$ is uniformly bounded in $n$ and  $t$. Note that $d(n,t)$ is defined for almost all $t\in \R\setminus F$ (for $t\in G$).

A key argument in this proof is the way to pass from lower bounds on hypersurfaces to a lower bound in $\O$. To do this we have the following.

\begin{lem}\label{L5.intdeg}
The following lower bound holds
\[
\int_{\R\setminus F}{d(n,t)\,{\rm d}t}\geq L(g,d_{a^2})-\eta(8k^2+3k+1).
\]
\end{lem}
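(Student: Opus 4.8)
The plan is to rewrite the integrand $d(n,t)$ as a signed sum of indicator functions of the singularities of $g$ and to recognise the integral as a layer-cake (telescoping) identity that recovers $\sum_i\{\xi_\eta(p_i)-\xi_\eta(n_i)\}$, a quantity already bounded below by $L(\cl,d_{a^2})-\eta$ through Proposition \ref{C5.Ustruc}.2. The only genuine work is to absorb the two sources of error, namely the $t$-dependent displacement of the singularities $M\mapsto M^t$ and the excised set $F$, into the $\eta$-budget.

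First I would record the pointwise formula
\[
d(n,t)=\sum_{i\in\N_k}\1_{\{\xi_\eta(p_i^t)\geq t\}}-\sum_{i\in\N_k}\1_{\{\xi_\eta(n_i^t)\geq t\}},\qquad t\in G,
\]
which is exactly the definition of $d(n,t)$ recalled above. Since $|\n\xi_\eta|\leq a^2\leq 1$, the map $\xi_\eta$ is $1$-Lipschitz for $\deucl$; and because $\Pi_{\p\O}(M^t)=M$ with $\dist(M^t,\p\O)=\rho-r(t)\in(0,\rho]$, we have $|M^t-M|\leq\rho$, hence $|\xi_\eta(M^t)-\xi_\eta(M)|\leq\rho$ for every $M\in\cl$ and every $t\in G$. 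Enlarging the positive superlevel sets via $\xi_\eta(p_i^t)\geq\xi_\eta(p_i)-\rho$ and shrinking the negative ones via $\xi_\eta(n_i^t)\leq\xi_\eta(n_i)+\rho$ produces the $t$-robust minorant
\[
d(n,t)\geq D(t):=\sum_{i\in\N_k}\left(\1_{\{t\leq\xi_\eta(p_i)-\rho\}}-\1_{\{t\leq\xi_\eta(n_i)+\rho\}}\right),\qquad t\in G.
\]

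Next I would integrate. Since $D$ is a finite signed sum of indicators of half-lines it is compactly supported (both $d$ and $D$ vanish for $|t|$ large, as $\xi_\eta=|x|/2$ off a ball), and the elementary identity $\int_\R(\1_{\{t\leq c_1\}}-\1_{\{t\leq c_2\}})\di t=c_1-c_2$ (valid whatever the ordering of $c_1,c_2$) gives
\[
\int_\R D(t)\di t=\sum_{i\in\N_k}\bigl((\xi_\eta(p_i)-\rho)-(\xi_\eta(n_i)+\rho)\bigr)=\sum_{i\in\N_k}\{\xi_\eta(p_i)-\xi_\eta(n_i)\}-2k\rho\geq L(\cl,d_{a^2})-\eta-2k\rho,
\]
the last step being Proposition \ref{C5.Ustruc}.2. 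To pass from $\int_\R$ to $\int_{\R\setminus F}$, note that $d(n,t)$ is defined on $G$, whose complement inside $\R\setminus F$ is Lebesgue-null by Sard's lemma, and that $|D|\leq k$ with $(\R\setminus G)$ meeting the support of $D$ in a set of measure at most $\mathscr{H}^1(F)\leq(8k+1)\eta$. Hence
\[
\int_{\R\setminus F}d(n,t)\di t=\int_G d(n,t)\di t\geq\int_G D(t)\di t\geq L(\cl,d_{a^2})-\eta-2k\rho-k(8k+1)\eta.
\]
Using $\rho<10^{-2}\eta<\eta$ to fold $2k\rho$ into the budget and $L(\cl,d_{a^2})=L(g,d_{a^2})$ (Proposition \ref{P5.BBM1AuxProp}.5), the right-hand side exceeds $L(g,d_{a^2})-(8k^2+3k+1)\eta$, which is the claim.

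The point requiring the most care is precisely this bookkeeping, so that the error constants coalesce exactly into $8k^2+3k+1$: one must combine the $1$-Lipschitz control of $\xi_\eta$ against the displacement $|M^t-M|\leq\rho$, discard the contribution of $F$ via the crude a priori bound $|d(n,t)|\leq k$ and $\mathscr{H}^1(F)\leq(8k+1)\eta$, and use $\rho<10^{-2}\eta$ to charge $2k\rho$ to the single small parameter $\eta$. Conceptually there is no difficulty beyond the layer-cake reformulation; the subtlety is entirely that every discrepancy must be billed against $\eta$ to reach the stated constant.
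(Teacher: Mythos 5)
Your proposal is correct and follows essentially the same route as the paper's proof: the minorant $D(t)$ obtained from the $1$-Lipschitz bound $|\xi_\eta(M^t)-\xi_\eta(M)|\leq\rho$ is exactly the paper's $S(t)=\sum_i\{\1_{\xi_\eta(p_i)\geq t+\rho}-\1_{\xi_\eta(n_i)\geq t-\rho}\}$, the excised set $F$ is discarded through the same crude bound $|D|\leq k$ with $\mathscr{H}^1(F)\leq(8k+1)\eta$, and the layer-cake identity together with Proposition \ref{C5.Ustruc}.2 and $\rho\leq\eta$ yields the same constant $8k^2+3k+1$. The bookkeeping matches the paper's line by line, so there is nothing to add.
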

\begin{proof}
Let $m=\inf_{\O_\rho}\xi_\eta$ and for $t\in\R$ let 
\begin{eqnarray*}
S(t)&=&{\rm Card}\left\{p_i\in\{\xi_\eta\geq t+\rho\}\right\}-{\rm Card}\left\{n_i\in\{\xi_\eta\geq t-\rho\}\right\}
\\&=&\sum_{i=1}^k\{\1_{\xi_\eta(p_i)\geq t+\rho}-\1_{\xi_\eta(n_i)\geq t-\rho}\}.
\end{eqnarray*}
We have
\begin{eqnarray*}
\int_{\R\setminus F}{d(n,t)\,{\rm d}t}&=&\int_{\R\setminus F}{{\rm Card}\{p_i^t\in\{\xi_\eta\geq t\}\}-{\rm Card}\{n_i^t\in\{\xi_\eta\geq t\}\}}
\\\text{[$\xi_\eta$ is $1$-Lipschitz]}&\geq&\int_{\R\setminus F}{{\rm Card}\{p_i\in\{\xi_\eta\geq t+\rho\}\}-{\rm Card}\{n_i\in\{\xi_\eta\geq t-\rho\}\}}
\\ \text{[$S(t)\leq k\,\&\,\mathscr{H}^1(F)\leq(8k+1)\eta$]} &\geq&\int_{\R}{S(t)}-k(8k+1)\eta
\\&\geq&\sum_{i=1}^k\int_{m}^\infty\{\1_{\xi_\eta(p_i)\geq t+\rho}-\1_{\xi_\eta(n_i)\geq t-\rho}\}-k(8k+1)\eta
%\\&=&\sum_{i=1}^k\left\{\xi_\eta(p_i)-\xi_\eta(n_i)-2\rho\right\}
\\&\geq&\sum_{i=1}^k\left\{\xi_\eta(p_i)-\xi_\eta(n_i)\right\}-2k\rho-k(8k+1)\eta
\\\text{[$\rho\leq\eta$]}&\geq& L(g,d_{a^2})-(8k^2+3k+1)\eta.
\end{eqnarray*}
\end{proof}
With the help of Lemma \ref{L5.intdeg}, we have%À présent, le calcul est directe (les $C_i$'s sont indépendants de $n$):
\begin{align*}
F_{\v_n}(v_n,\O_\rho)\geq(\text{Prop. \ref{C5.Ustruc} $\&$ \eqref{LowerBoundOutsideBis}})\geq\:&\:\frac{1}{2}\int_\O{(|\n\xi_\eta|-\v_n^4)\left[|\n v_n|^2+\frac{U^2_{\v_n}}{2\v_n^2}(1-|v_n|^2)^2\right]}-C_0
\\\geq\text{\eqref{5.UpperBoundGeneralCase}}\geq\:&\:\int_{\R\setminus F}{\frac{1}{2}\int_{\Sigma_t}{\left\{|\n v_n|^2+\frac{b^2}{2{\v}^2_n}(1-|v_n|^2)^2\right\}}\di t}-(C_0+1)%,\,\tilde{\v}_n=\frac{\v_n}{b}\:[\text{\eqref{5.UpperBoundGeneralCase}}]
\\\geq\text{\eqref{5.ApplicationLemmeSandier}}\geq\:&\:\pi(\ln\frac{b\rho}{\v_n}-C)\int_{\R\setminus F}{|d(n,t)|}-(C_0+1)
\\\geq\text{(Lemma \ref{L5.intdeg})}\geq\:&\:\pi(\ln\frac{b\rho}{\v_n}-C)\left[L(g,d_{a^2})-(8k^2+3k+1)\eta\right]-(C_0+1)
\\\geq\:&\:\pi|\ln\v_n|\left[L(g,d_{a^2})-(8k^2+3k+1)\eta\right]-\tilde{C}.
\end{align*}
It follows that
\[
\liminf_n\frac{F_{\v_n}(v_n,\O)}{|\ln\v_n|}\geq\pi L(g,d_{a^2})-(8k^2+3k+1)\eta,\,\forall\,\eta>0.
\]
Estimate \eqref{5.BorneLimInfBis} in Proposition \ref{P5.BorneInfLeCasGénéral} is obtained by letting $\eta\to0$ in the above estimate.

We now sketch the arguments leading to \eqref{5.BorneLimInfBisCompact} and \eqref{5.BorneLimInfSymCase}. The fundamental ingredient is a lower bound for $F_\v(v_\v,\O\setminus K_\mu)$. Without loss of generality, by compactness of $K_\mu$, we may only consider the situation $K=\overline{B(x,r_x)}$ for some $x$ which does not belong to a geodesic link between the singularities of $g$; here, $r_x>0$ is some small number.

%The major modification in the obtention of lower bound is the structure function which is used. 

%Note that since $K_\mu$ is a compact set, it suffices to obtain a similar estimate for $B(x,r_x)$ for all $x\in K_\mu$. 

In order to prove \eqref{5.BorneLimInfBisCompact}, we use Proposition \ref{C5.StructureFunctionCompact}: we consider $\xi_{\eta,K}$ s.t. $\n\xi_{\eta,K}=0$ in $K$.% we obtain the existence of a radius $r_{x}$ (s.t. $K$ has no effect on the geodesic links) and for all $\eta>0$ there is a suitable structure function adapted to $K$. 

%With exactly the same argument we obtain that 
Following the same lines of proof of \eqref{5.BorneLimInfBis}, we find that
\[
\liminf_n\frac{F_{\v_n}(v_n,\O\setminus K)}{|\ln\v_n|}\geq\pi L(g,d_{a^2}).
\]
Combining this lower bound with \eqref{5.UpperBoundGeneralCase}, we obtain
\begin{equation}\label{5.ConcentrationEnergySmallComp}
F_{\v_n}(v_n,K)=o(|\ln\v_n|).
\end{equation}
To get \eqref{5.BorneLimInfBisCompactAlelkqsjdf}, we use Proposition \ref{C5.StructureFunctionCompactBisBis} and we get
\[
\liminf_n\frac{F_{\v_n}(v_n,\O\setminus K)}{|\ln\v_n|}\geq\pi\left[ L(g,d_{a^2})-a^2\H^1_{|\cup\Gamma_i}(K)\right].
\]
This lower bound is exactly \eqref{5.BorneLimInfBisCompactAlelkqsjdf} and by combining  \eqref{5.BorneLimInfBisCompactAlelkqsjdf} with \eqref{5.UpperBoundGeneralCase} yields \eqref{mpfkjfjfjhfngngng}. Therefore, as explained in Section \ref{S5.StrFonCCompAAAA}, from \eqref{5.ConcentrationEnergySmallComp} and \eqref{mpfkjfjfjhfngngng} we prove Theorem \ref{T5.Main2}. 

In the symmetric case, using Proposition \ref{P5.StructFonctSymCas}, for $x\in\O\setminus[p,n]$ we obtain the existence of $r_x>0$ s.t., with $K=\overline{B(x,r_x)}$, we have
\[
F_{\v_n}(v_n,\O\setminus K)\geq\pi d_{a^2}(p,n)|\ln\v|-C_K.
\]
This estimate is obtained exactly as \eqref{5.BorneLimInfBis} because we may have a structure function $\xi$ with "$\eta=0$".% and thus the error term is a $\mathcal{O}(1)$

Consequently from the upper bound \eqref{5.UpperBoundGeodesicareLines}, we deduce
\[
F_{\v_n}(v_n, K)\leq C_K'
\]
which implies Theorem \ref{T5.MainSymCase}.

\section{Extension by density of Theorem \ref{T5.Main1}}\label{S5.DensityArgument}

From \eqref{5.BorneLimInfBisCompact} and \eqref{5.UpperBoundGeneralCase}, we obtain that Theorem  \ref {T5.Main1} holds for $g\in\mathcal{H}$. This section is devoted to the extension of Theorem \ref{T5.Main1} to the general case $g\in H^{1/2}(\p\O,\S^1)$.

For $g\in H^{1/2}(\p\O,\S^1)$, we denote
\[
f_{\v,g}=\min_{v\in H^1_g}F_\v(v).
\]

Using same arguments as in \cite{BBM1}, we have
\begin{prop}\label{P5.FundPropDenste1}
\begin{enumerate}
\item Let $\delta\in(0,1)$. There is $C(\delta)>0$ s.t. for $g_1,g_2\in H^{1/2}(\p\O,\S^1)$, we have ($\text{(5.1),(5.2) in \cite{BBM1}}$)
\begin{equation}\label{5.ComparaisonEnergyProd}
(1-\delta)f_{\v,g_1}-C(\delta)f_{\v,g_2}\leq f_{\v,g_1g_2}\leq (1+\delta)f_{\v,g_1}+C(\delta)f_{\v,g_2}.
\end{equation} 
\item There is $C>0$ depending only on $\O$ s.t. for $g\in H^{1/2}(\p\O,\S^1)$ we have $(\text{(5.4) in \cite{BBM1}})$
\begin{equation}\label{5.BorneGrossiereEnergy}
f_{\v,g}\leq C|g|^2_{H^{1/2}(\p\O)}(1+|\ln\v|).
\end{equation}
\item If $(g_n)_n\subset\mathcal{H}$ is s.t. $g_n\to g$  in $H^{1/2}(\p\O)$ then Lemma 17 in \cite{BBM1} applied with $u_n=g_n/g$ and $v=g$ yields 
\begin{equation}\label{5.ConvQuotient}
\left|\frac{g_n}{g}\right|_{H^{1/2}(\p\O)}\to0.
\end{equation}
\item There is $C>0$ depending only on $\O$ and on $a$ s.t. for $g_1,g_2\in H^{1/2}(\p\O,\S^1)$ we have ((2.6) in \cite{BBM1})
\begin{equation}\label{5.ContLongMin}
|L(g_1,d_{a^2})-L(g_2,d_{a^2})|\leq C|g_1-g_2|_{H^{1/2}(\p\O)}\left(|g_1|_{H^{1/2}(\p\O)}+|g_2|_{H^{1/2}(\p\O)}\right).
\end{equation}
\end{enumerate}
\end{prop}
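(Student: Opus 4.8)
The plan is to recognize that all four statements are transcriptions of the corresponding facts in \cite{BBM1}, the only novelty being the bounded weights $U_\v^2,U_\v^4$. Since Proposition \ref{P5.Fond1}.1 gives $b\le U_\v\le1$, one has the uniform comparison $b^4\,E_\v^0(v)\le F_\v(v)\le E_\v^0(v)$ for every $v$, where $E_\v^0$ is the unweighted functional. The rough bound \eqref{5.BorneGrossiereEnergy} then follows at once: $f_{\v,g}\le\inf_{H^1_g}E_\v^0\le C|g|^2_{H^{1/2}(\p\O)}(1+|\ln\v|)$, the last inequality being precisely (5.4) of \cite{BBM1}.

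For the product inequality \eqref{5.ComparaisonEnergyProd} I would rerun the algebraic argument of \cite{BBM1}, keeping the weights inside each term. Let $v_1,v_2$ be minimizers of $F_\v$ for $g_1,g_2$; by Proposition \ref{P5.Fond1}.6 they satisfy $|v_i|\le1$. Testing the energy for $g_1g_2$ against $v_1v_2$ and using $\n(v_1v_2)=v_2\n v_1+v_1\n v_2$, Young's inequality gives the pointwise bound
\[
|\n(v_1v_2)|^2\le(1+\delta)|\n v_1|^2+C(\delta)|\n v_2|^2,
\]
while the decomposition $1-|v_1v_2|^2=(1-|v_1|^2)+|v_1|^2(1-|v_2|^2)$ together with $|v_1|\le1$ gives
\[
(1-|v_1v_2|^2)^2\le(1+\delta)(1-|v_1|^2)^2+C(\delta)(1-|v_2|^2)^2.
\]
Multiplying the first inequality by $U_\v^2$ and the second by $U_\v^4$ and integrating produces $f_{\v,g_1g_2}\le F_\v(v_1v_2)\le(1+\delta)f_{\v,g_1}+C(\delta)f_{\v,g_2}$, the weights being untouched since they factor through each term. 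The lower bound follows by applying this to $g_1=(g_1g_2)\overline{g_2}$ and noting that conjugation leaves $F_\v$ invariant, so $f_{\v,\overline{g_2}}=f_{\v,g_2}$; rearranging and absorbing $(1+\delta)^{-1}$ into $C(\delta)$ yields the claimed left inequality.

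Statement \eqref{5.ConvQuotient} involves neither $\v$ nor the energy and is Lemma 17 of \cite{BBM1} applied verbatim to $u_n=g_n/g$, $v=g$. For the continuity \eqref{5.ContLongMin} I would first record the metric equivalence $b^2\deucl\le d_{a^2}\le\deucl$ on $\p\O$, immediate from $b^2\le a^2\le1$ and the definition \eqref{5.DefDistancea^2}: in particular $|\varphi|_{d_{a^2}}\le1$ forces $|\varphi|_{\deucl}\le1$, so the supremum defining $L(\cdot,d_{a^2})$ in \eqref{5.DeflongMin1} ranges over functions that are $1$-Lipschitz for $\deucl$. Using $|\sup_A T_{g_1}-\sup_A T_{g_2}|\le\sup_A|T_{g_1}-T_{g_2}|$ and the bilinear estimate of Proposition \ref{P5.BBM1AuxProp}.2,
\[
|L(g_1,d_{a^2})-L(g_2,d_{a^2})|\le\frac1{2\pi}\sup_{|\varphi|_{d_{a^2}}\le1}|(T_{g_1}-T_{g_2})(\varphi)|\le C|g_1-g_2|_{H^{1/2}(\p\O)}\bigl(|g_1|_{H^{1/2}(\p\O)}+|g_2|_{H^{1/2}(\p\O)}\bigr),
\]
exactly as in (2.6) of \cite{BBM1}. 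I do not expect any serious obstacle here: the weights are uniformly bounded away from $0$ and $\infty$, so the only points deserving attention are the metric equivalence used in \eqref{5.ContLongMin} and the observation that the weights pass through each term unchanged in the product estimate; the remainder is a direct transcription of \cite{BBM1}.
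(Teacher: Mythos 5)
Your proposal is correct and takes essentially the same approach as the paper, which likewise dispatches items 1--3 by transcribing the arguments of \cite{BBM1} (the bounds $b\le U_\v\le1$ making the weights harmless, exactly as you observe) and only writes out item 4. For \eqref{5.ContLongMin} your bookkeeping --- difference of suprema, the bilinear estimate of Proposition \ref{P5.BBM1AuxProp}.2, and the inclusion $\{|\varphi|_{d_{a^2}}\le1\}\subset\{|\varphi|_{\deucl}\le1\}$ --- is the same computation as the paper's chain $|L(g_1,d_{a^2})-L(g_2,d_{a^2})|\le L(g_1\overline{g_2},d_{a^2})\le L(g_1\overline{g_2},\deucl)$ followed by (2.8) of \cite{BBM1}, since $T_{g_1}-T_{g_2}=T_{g_1\overline{g_2}}$ by Proposition \ref{P5.BBM1AuxProp}.1.
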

\begin{proof}
The estimates \eqref{5.ComparaisonEnergyProd} $\&$ \eqref{5.BorneGrossiereEnergy} are obtained using exactly the same arguments than in \cite{BBM1}. Estimate \eqref{5.BorneGrossiereEnergy} is proved in \cite{BBM1}. We now focus on \eqref{5.ContLongMin} which is not exactly proved in \cite{BBM1} (similar estimates are obtained for other metrics). We have easily proved by the definition of $L(g,d_{a^2})=(2\pi)^{-1}\sup\left\{T_g(\varphi)\,\left|\,|\varphi|_{d_{a^2}}\leq1\right.\right\}$ and Proposition \ref{P5.BBM1AuxProp}.1 that
\begin{eqnarray*}%\nonumber%\label{5.ContLongMin}
|L(g_1,d_{a^2})-L(g_2,d_{a^2})|&\leq& L({g_1}\overline{g_2},d_{a^2})\leq L({g_1}\overline{g_2},d_{\rm eucl})\\&\leq[\text{Eq. (2.8) in \cite{BBM1}}]\leq& C|g_1-g_2|_{H^{1/2}(\p\O)}\left(|g_1|_{H^{1/2}(\p\O)}+|g_2|_{H^{1/2}(\p\O)}\right).
\end{eqnarray*}
\end{proof}

Using this proposition, Theorem \ref{T5.Main1} is proved as follows.

Let $g\in H^{1/2}(\p\O,\S^1)$. By Proposition \ref{P5.BBM1AuxProp}.3, there is $(g_n)_n\subset\mathcal{H}$ s.t. $g_n\to g$ in $H^{1/2}(\p\O)$.

%En utilisant la deuxième assertion de la proposition \ref{P5.BBM1AuxProp} et la troisème de la proposition \ref{P5.FundPropDenste1}, $T_{g_n/g}(\varphi)\to0$ pour $\varphi\in{\rm Lip}(\p\O,\R)$.

Let $\v\in(0,1)$ and $\delta>0$. Then, by \eqref{5.ComparaisonEnergyProd}, we have
\[
(1-\delta)\frac{f_{\v,g_n}}{|\ln\v|}-C(\delta)\frac{f_{\v,g/g_n}}{|\ln\v|}\leq \frac{f_{\v,g}}{|\ln\v|}\leq (1+\delta)\frac{f_{\v,g_n}}{|\ln\v|}+C(\delta)\frac{f_{\v,g/g_n}}{|\ln\v|}.
\]
From \eqref{5.BorneGrossiereEnergy} and the fact that Theorem  \ref{T5.Main1} holds for $g_n$, we have
\begin{eqnarray}\nonumber
(1-\delta)\pi L(g_n,d_{a^2})-C'(\delta)|g/g_n|_{H^{1/2}}&\leq&\liminf_\v\frac{f_{\v,g}}{|\ln\v|}
\\\nonumber
&\leq&\limsup_\v \frac{f_{\v,g}}{|\ln\v|}
\\\label{5.NumRajouter}
&\leq&(1+\delta)\pi L(g_n,d_{a^2})+C'(\delta)|g/g_n|_{H^{1/2}}.
\end{eqnarray}
Using \eqref{5.ContLongMin}, we obtain that $L(g_n,d_{a^2})\to L(g,d_{a^2})$. If, in \eqref{5.NumRajouter}, we first let $n\to\infty$, we use \eqref{5.ConvQuotient} and we next let $\delta\to0$, we obtain that%It follows letting $n\to\infty$ (using \eqref{5.ConvQuotient}) and next letting $\delta\to0$ that
\[
\lim_\v\frac{f_{\v,g}}{|\ln\v|}=\pi L(g,d_{a^2}).
\]
The proof of Theorem \ref{T5.Main1} is complete.

\noindent{\bf Acknowledgements.} The author is grateful to Professor Petru Mironescu for having introduced him to the subject of this paper and for stimulating conversations. He wants also to thank Professor Pierre Bousquet for helpful comments. The author gratefully acknowledge the comments of the anonymous Referee.

\appendix
%\section{Proof of Proposition \ref{C5.Ustruc}}\label{PreuvePremierProp}
%In this and both next  appendices we construct functions satisfying properties of Propositions \ref{C5.Ustruc}, \ref{C5.StructureFunctionCompact}, \ref{C5.StructureFunctionCompactBisBis} and \ref{P5.StructFonctSymCas}.

%Throughout these appendices, we fix $\cl=\Pos\cup \Neg$, $\Card(\Pos)=\Card(\Neg)=k\in\N^*$, $\Pos\cap \Neg=\emptyset$.

%\end{proof}
%\section{Proof of Propositions \ref{C5.StructureFunctionCompact} $\&$ \ref{C5.StructureFunctionCompactBisBis}}\label{ProofCompactlsmdkfhj}

\section{Proof of Proposition \ref{P5.PropertiesdK}}\label{S5.ProofP5.PropertiesdK}

We prove the first assertion. There are two cases to consider: $x\in K$ and $x\notin K$.

If $x\in K$ and $y\notin K$, then we have the existence of a unique point $y_0\in K$ which minimizes $d_{a^2}(y,z)$ among the points $z\in K$. Clearly considering $\Gamma$ a geodesic in $(\R^3,d_{a^2})$ joining $y$ with $y_0$, by definition of $y_0$, $\Gamma\cap K=\emptyset$. Thus $\Gamma$ is a minimal $K$-curve joining $x$ with $y$ according to the definition given above.

If $x,y\notin K$, then we consider $\Gamma$ a geodesic joining $x,y$ in $(\R^3,d_{a^2})$ and, for $z\in\{x,y\}$, let $\Gamma_z$ be a minimal curve in $(\R^3,d_{a^2})$ joining $z$ with $K$ ({\it i.e.} joining $z$ with $x_0$). 

If ${\rm long}_{a^2}(\Gamma_x)+{\rm long}_{a^2}(\Gamma_y)< {\rm long}_{a^2}(\Gamma\setminus K)\leq d_{a^2}(x,y)$, then one may consider $\Gamma_x\cup\Gamma_z$ as a minimal $K$-curve. Indeed, in this situation, $d^K_{a^2}(x,y)< d_{a^2}(x,y)$ which implies that a minimizing sequence of $K$-curves  $\tilde{\Gamma}_n$ satisfies for large $n$ that $\tilde{\Gamma}_n$ contains curves with an endpoint on $\p K$. More precisely, by definition, there are $\Gamma^n_x,\Gamma^n_y$ two connected components of $\tilde{\Gamma}_n$ s.t. for $z\in\{x,y\}$, $\Gamma^n_z$ has $z$ and $z_n'$ for endpoints with $z_n'\in\p K$. Therefore 
\[
{\rm long}_{a^2}(\Gamma_x)+{\rm long}_{a^2}(\Gamma_y)\leq {\rm long}_{a^2}(\tilde{\Gamma}_n),
\]
and thus $\Gamma_x\cup\Gamma_z$ is a minimal $K$-curve.

Otherwise, ${\rm long}_{a^2}(\Gamma_x)+{\rm long}_{a^2}(\Gamma_y)\geq d_{a^2}(x,y)$. Consequently, denoting $\Gamma$ a geodesic in $(\R^3,d_{a^2})$ joining $x$ with $y$, $\Gamma\setminus K$ is a $K$-curve and has a minimal length.

It remains to prove that $\Gamma$, a minimal $K$-curve, is a union of at most two geodesics in $(\R^3,d_{a^2})$. If $\Gamma$ is connected, then, by the definition of a $K$-curve, $\Gamma\cap K=\emptyset$. Thus $\Gamma$ is a geodesic joining $x,y$.

Otherwise, assume that $\Gamma$ is not connected. By the definition of a $K$-curve and by the minimality of $\Gamma$, for $z\in\{x,y\}$, there are $z'\in\p K$ and $\Gamma_z$ a connected component of $\Gamma$ s.t. $z,z'$ are the endpoints of $\Gamma_z$. Thus, by minimality of $\Gamma$, $\Gamma_z$ is a geodesic joining $z,z'$ in $(\R^3,d_{a^2})$ and $\Gamma=\Gamma_x\cup\Gamma_y$.

Now we prove the second assertion. First, we assume that $x_0\notin\p\o$ and that $x_0$ is on a geodesic curve joining $x,y$ in $(\R^3,d_{a^2})$. 

Consider $r_{x_0,x,y}=10^{-2}\min\left\{|x-x_0|,|y-x_0|,\dist(x_0,\p\o)\right\}$. Then, for $r<r_{x_0,x,y}$, considering $K=\overline{B(x_0,r)}$ and a $K$-curve $\Gamma\setminus K$ where $\Gamma$ is a geodesic joining $x,y$ in $(\R^3,d_{a^2})$ and containing $x_0$, we obtain that 
\begin{equation}\label{5.AConditionForKMinimalCurve1}
d^K_{a^2}(x,y)\leq d_{a^2}(x,y)-2a^2(x_0)r.
\end{equation}
This comes from the fact that $\Gamma\cap K$ is a diameter of $K$ and that this diameter is contained in the same connected component of $ \R^3\setminus\p\o$ as $x_0$. To obtain the reverse estimate, it suffices to consider $\Gamma$, a minimal $K$-curve joining $x,y$. From \eqref{5.AConditionForKMinimalCurve1}, we know that $\Gamma$ has exactly two connected components: $\Gamma_x,\Gamma_y$ with $\Gamma_z$ has $z,z'$ for endpoints with $z\in\{x,y\}$ and $z'\in\p K$. Thus it suffices to complete $\Gamma$ by the line segments $[x_0,x']$ and $[x_0,y']$ to obtain the reverse inequality. (Note that in this situation, $[x',y']$ is a diameter of $K$)

If $x_0\in\p\o$, then the argument is similar taking $0<r_{x_0,x,y}<10^{-2}\min\{|x-x_0|,|y-x_0|\}$ sufficiently small s.t.: 
\begin{enumerate}[$\bullet$]
\item $B(x_0,r_{x_0,x,y})\setminus\p\o$ has exactly two connected components, 
\item For all geodesic $\Gamma$ joining $x,y$ in $(\R^3,d_{a^2})$, if $x_0\in\Gamma$ then $(\Gamma\cap K)\setminus\p\o$ has exactly two connected components: one in $\o$ and the other in $\R^3\setminus \overline{\o}$. 
\end{enumerate}
Note that from Proposition \ref{P5.ClassificationGeodesiquespecialcase}, Assertion {\it 4.d.}, $r_{x_0,x,y}$ is well defined.

Now we prove the last assertion arguing by contradiction. Assume that there is $r_n\downarrow0$ s.t. denoting $K_n=\overline{B(x_0,r_n)}$, we have $d^{K_n}_{a^2}(x,y)<d_{a^2}(x,y)$. Consequently there are $x_n,y_n\in\p K_n$ and $\Gamma_n=\Gamma_x^n\cup\Gamma_y^n$ where $\Gamma_z^n$ is a geodesic joining $z$ and $z_n$ in $(\R^3,d_{a^2})$, $z\in\{x,y\}$. Consequently, for $z\in\{x,y\}$, one may complete $\Gamma_z^n$ by the line segment $[z_n',x_0]$ whose length in $(\R^3,d_{a^2})$ is at most $r_n$. We denote this curve by $\tilde{\Gamma}_z^n$. Clearly $d_{a^2}(z,x_0)\leq{\rm long}_{a^2}(\tilde{\Gamma}_z^n)\leq {\rm long}_{a^2}({\Gamma}_z^n) +r_n$. 

It suffices to claim that in a metric space $(X,d)$ which admits geodesic curves we have for $x_0,x,y$ three distinct points in $X$
\[
x_0\text{ is on a geodesic joining }x,y\Longleftrightarrow d(x,y)=d(x,x_0)+d(x_0,y).
\]

Since $x_0$ is not on a geodesic curve joining $x,y$ in $(\R^3,d_{a^2})$, there is $\eta>0$ s.t. $d_{a^2}(x,y)+\eta<d_{a^2}(x,x_0)+d_{a^2}(x_0,y)$ and thus
\[
{\rm long}_{a^2}(\Gamma_x^n)+{\rm long}_{a^2}(\Gamma_y^n)=d_{a^2}^{K_n}(x,y)< d_{a^2}(x,y)\leq {\rm long}_{a^2}(\Gamma_x^n)+{\rm long}_{a^2}(\Gamma_y^n)+2r_n-\eta.
\]
Clearly we obtain a contradiction for $n$ sufficiently large s.t. $r_n<\eta/2$.

%\begin{proof}
%The fact that $U_\v$ is radially symmetric and non increasing is a direct consequence of the spherical symmetry of $\O,\o$ and $g\equiv1$ on $\p\O$. We just prove that $U_\v$ is decreasing. 

%We argue by contradiction and we assume that there is $\lambda\in[b,1]$ s.t. $\{U_\v=\lambda\}=\overline{B(0,\rho_1)}\setminus B(0,\rho_2)$ with $0\leq\rho_2<\rho_1\leq1$. Clearly from the Euler-Lagrange equation satisfied by $U_\v$, we obtain $\lambda\in\{b,1\}$ and if $\lambda=1$ then $\rho_1=1$ and $\rho_2\geq r_0$, otherwise, if $\lambda=b$, then $\rho_1\leq r_0$ and $\rho_2=0$.

%We consider the case $\lambda=1$. If $\rho_2>r_0$, using Hopf lemma (in $B(0,\rho_2)\setminus\overline{\o}$) we obtain easily a contraction since on $\p B(0,\rho_2)$, $0<\p_{\nu_+}U_\v=-\p_{\nu_-}U_\v\equiv0$ and $U_\v\in H^2$. Here $\nu_+=x/|x|$ and $\nu_-=-\nu^+$.
%\end{proof}
\vspace{2mm}

%\section{Proof of Proposition \ref{P5.PropertiesdK}}\label{S5.ProofPropPseudo}

%\bibliography{$HOME/BiblioTex/BiblioMain}
\bibliography{BiblioMain}
\end{document}